\theoremstyle{plain}
\newtheorem{lemma}{Lemma}[section]
\newtheorem{theorem}[lemma]{Theorem}
\newtheorem{corollary}[lemma]{Corollary}
\newtheorem{proposition}[lemma]{Proposition}
\newtheorem{conjecture}[lemma]{Conjecture}
\theoremstyle{remark}
\newtheorem{remark}[lemma]{Remark}
\newtheorem*{example}{Example}
\newtheorem{definition}[lemma]{Definition}
\newcommand{\R}{\mathbb{R}}
\newcommand{\C}{\mathbb{C}}
\newcommand{\N}{\mathbb{N}}
\newcommand{\p}{\mathfrak{p}}
\newcommand{\Sym}[1]{\mathfrak{S}_{#1}}
\newcommand{\M}{\mathcal{M}}
\newcommand{\Z}{\mathbb{Z}}
\newcommand{\loops}{\mathcal{L}}
\newcommand{\pp}{\mathbf{p}}
\newcommand{\qq}{\mathbf{q}}
\newcommand{\question}[1]{\textcolor{red}{\emph{#1}}}
\newcommand{\orient}{\phi}
\newcommand{\pairings}{\mathcal{P}}
\DeclareMathOperator{\Tr}{Tr}
\DeclareMathOperator{\Gl}{GL}
\DeclareMathOperator{\Aut}{Aut}
\DeclareMathOperator{\Sp}{Sp}
\DeclareMathOperator{\type}{type}
\DeclareMathOperator{\Stab}{Stab}
\author{Valentin F\'eray}
\address{LaBRI, Universit\'e Bordeaux 1, 351 cours de la Lib\'eration, 33 400
Talence, France}
\email{feray@labri.fr}
\author{Piotr \'Sniady}
\address{Institute of Mathematics, Polish Academy of Sciences,
\mbox{ul.~Śniadec\-kich 8,} 00-956 Warszawa, Poland \newline
\indent Institute of Mathematics,
University of Wroclaw,  \mbox{pl.\ Grunwaldzki~2/4,} 50-384
Wroclaw, Poland}
\email{Piotr.Sniady@math.uni.wroc.pl}
\title[Zonal polynomials]{Zonal polynomials via Stanley's coordinates\\ and
free cumulants}
\begin{document}

\begin{abstract}
We study zonal characters which are defined as suitably normalized coefficients
in the expansion of zonal polynomials in terms of power-sum symmetric functions.
We show that the zonal characters, just like the characters of the symmetric
groups,
admit a nice combinatorial des\-cription in terms of Stanley's multirectangular
coordinates of Young dia\-grams.
We also study the analogue of Kerov polynomials,
namely we express the zonal characters as
polynomials in free cumulants and we give an explicit combinatorial
interpretation of their coefficients. In this way, we prove
two recent conjectures of Lassalle for Jack polynomials in
the special case of zonal polynomials.
\end{abstract}

\maketitle

\section{Introduction}

\subsection{Zonal polynomials}

\subsubsection{Background}
Zonal polynomials were introduced by Hua \cite[Chapter VI]{Hua1963} and later studied
by James \cite{James1960,James1961} in order to solve some problems from statistics
and multivariate analysis. They quickly became a fundamental tool in this
theory as well as in the random matrix theory (an overview can be found in the
book of Muirhead \cite{Muirhead1982} or also in the introduction to the
monograph of Takemura \cite{Takemura1984}). They also appear in
the representation theory of the Gelfand pairs
$(\Sym{2n},H_n)$ (where $\Sym{2n}$ and $H_n$ are, respectively,
the symmetric and hyperoctahedral groups) and $(\Gl_d(\R),
O_d)$.
More precisely, when we expand zonal polynomials in the power-sum basis of the symmetric function ring,
the coefficients describe a canonical basis ({\it i.e.} the zonal spherical functions)
of the algebra of left and right $H_n$-invariant (resp.~$O_d$-invariant) functions on $\Sym{2n}$
(resp.~$\Gl_d(\R)$).

This last property shows that zonal polynomials can be viewed as an analogue
of Schur symmetric functions: the latter appear when we look at left and right
$\Sym{n}$ (resp. $U_d$) invariant functions on $\Sym{n} \times \Sym{n}$ (resp. $\Gl_d(\C)$).
the Gelfand pairs $(\Sym{n} \times \Sym{n}, \Sym{n})$ and $(\Gl_d(\C), U_d)$.
This is the underlying principle why
many of the properties of Schur functions can be extended to zonal polynomials
and this article goes in this direction. 

In this article we use a characterization of zonal polynomials 
due to James \cite{James1961} as their definition. The elements needed in our development
(including the precise definition of zonal polynomials) are given in Section
\ref{subsec:definition}. For a more complete introduction to the topic
we refer to the Chapter VII of Macdonald's book \cite{Macdonald1995}.

The main results of this article are new combinatorial formulas for zonal polynomials.
Note that, as they are a particular case of Jack symmetric
functions, there exists already a combinatorial interpretation for them in terms
of ribbon tableaux (due to Stanley \cite{Stanley1989}). But our formula is of
different type: it gives a combinatorial interpretation to the coefficients of
the zonal polynomial $Z_\lambda$ expanded in the power-sum basis as a function
of $\lambda$. In more concrete words, the combinatorial objects describing the
coefficient of $p_\mu$ in $Z_\lambda$ depend on $\mu$, whereas the statistics on
them depend on $\lambda$ (in Stanley's result it is roughly the opposite). This
kind of \emph{dual} approach makes appear shifted symmetric functions
\cite{OkounkovOlshanski1997} and is an analogue of recent developments
concerning characters of the symmetric group: more details will be given in
Section \ref{subsect:characters}.

      \subsubsection{Jack polynomials}
Jack \cite{Jack1970/1971} introduced a family of symmetric functions
$J^{(\alpha)}_\lambda$ depending on an additional parameter
$\alpha$. These functions are now called \emph{Jack polynomials}.
For some special values of $\alpha$ they coincide with some
established families of symmetric functions. Namely, up to multiplicative
constants, for $\alpha=1$ Jack polynomials coincide with Schur polynomials, for
$\alpha=2$ they coincide with zonal polynomials, for $\alpha=\frac{1}{2}$ they
coincide with symplectic zonal polynomials, for $\alpha = 0$ we recover the
elementary symmetric functions and finally their highest degree component in
$\alpha$ are the monomial symmetric functions. Moreover, some other
specializations appear in different contexts: the case $\alpha = 1/k$, where $k$
is an integer, has been considered by Kadell in relation with generalizations of
Selberg's integral \cite{Kadell1997}. In addition, Jack polynomials for
$\alpha=-(k+ 1)/(r+ 1)$ verify some interesting annihilation conditions
\cite{Feigin2002}.

Jack polynomials for a generic value of the parameter $\alpha$  do not seem
to have a direct interpretation, for example in
the context of the representation theory or in the theory of zonal spherical 
functions of some Gelfand pairs.
Nevertheless, over the time it has been shown that several
results concerning Schur and zonal polynomials can be generalized in a rather
natural way to Jack polynomials (see, for example, the work of Stanley
\cite{Stanley1989}), therefore Jack polynomials can be viewed as a natural
interpolation between several interesting families of symmetric functions at the
same time.

An extensive numerical exploration and conjectures done by Lassalle
\cite{Lassalle2008a,Lassalle2009} suggest that the kind of combinatorial
formulas we establish in this paper has generalizations for any value of the
parameter $\alpha$. Unfortunately, we are not yet able to achieve this goal.

   \subsection{The main result 1: a new formula for zonal polynomials}

      \label{subsubsec:main_result}

      \subsubsection{Pair-partitions}

The central combinatorial objects in this paper are pair-partitions:
\begin{definition}
A pair-partition $P$ of $[2n]=\{1,\ldots,2n\}$ is a set of pairwise disjoint
two-element sets, such that their (disjoint) union is equal to
$[2n]$. A pair-partition can be seen as an involution of
$[2n]$ without fixpoints, which associates to each element its
partner from the pair.
\end{definition}
The simplest example is the \textit{first} pair-partition, which will play a
par\-ti\-cu\-lar role in our article:
\begin{equation}
    \label{eq:first-pair-partition}
    S=\big\{ \{1,2\},\{3,4\},\dots,\{2n-1,2n\}\big\}. 
\end{equation}

\subsubsection{Couple of pair-partitions}
\label{SubsubsectCouplePP}
Let us consider two pair-partitions $S_1,S_2$ of the same set $[2n]$.
We consider the following bipartite edge-labeled graph $\loops(S_1,S_2)$:
\begin{itemize}
    \item it has $n$ black vertices indexed by the two-element sets of $S_1$
        and $n$ white vertices indexed by the two-element sets $S_2$;
    \item its edges are labeled with integers from $[2n]$.
        The extremities of the edge labeled $i$ are the two-element sets of $S_1$
        and $S_2$ containing $i$.
\end{itemize}
Note that each vertex has degree $2$ and each edge has one white and one black
extremity. Besides, if we erase the indices of the vertices, it is easy to
recover them from the labels of the edges (the index of a vertex is the set of
the two labels of the edges leaving this vertex). Thus, we forget the indices
of the vertices and view $\loops(S_1,S_2)$ as an edge-labeled graph.

As every vertex has degree $2$, the graph $\loops(S_1,S_2)$ is a collection of
loops. Moreover, because of the proper bicoloration of the vertices, all loops
have even length. Let $2\ell_1\geq 2\ell_2\geq\cdots$ be the ordered lengths of
these loops. The partition $(\ell_1,\ell_2,\dots)$ is called the type of
$\loops(S_1,S_2)$ or the type of the couple $(S_1,S_2)$. Its length, {\it
i.e.~}the number of connected components of the graph $\loops(S_1,S_2)$, will be
denoted by $|\loops(S_1,S_2)|$ (we like to see $\loops(S_1,S_2)$ as a set of
loops). We define the sign of a couple of pair-partitions as follows:
$$ (-1)^{\loops(S_1,S_2)} = (-1)^{(\ell_1-1)+(\ell_2-1)+\cdots} = (-1)^{n -
|\loops(S_1,S_2)|}$$
and the power-sum symmetric function
\begin{equation}
\label{eq:power-sum}
 p_{\loops(S_1,S_2)}(z_1,z_2,\dots) = p_{\ell_1,\ell_2,\dots}(z_1,z_2,\dots)
= \prod_{i} \sum_j z_j^{\ell_i}. 
\end{equation}

\begin{minipage}[t]{.4\linewidth}
{\it Example.} We consider
\[\begin{array}{c}
    S_1 = \big\{ \{1,2\},\{3,4\},\{5,6\} \big\}; \\
    S_2 = \big\{ \{1,3\},\{2,4\},\{5,6\} \big\}. \\
\end{array}\]
\end{minipage}
\begin{minipage}[t]{.6\linewidth}
    \[\text{Then }\loops(S_1,S_2) = \begin{array}{c}
        \includegraphics[height=1.2cm]{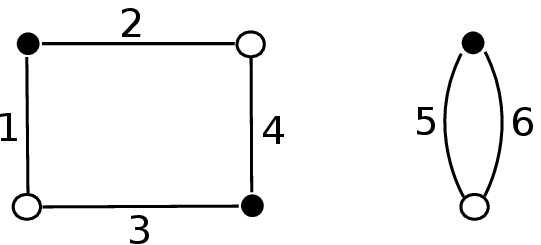}
    \end{array}.\]
\end{minipage}
So, in this case, $\loops(S_1,S_2)$ has type $(2,1)$.

Another, more complicated, example is given in the beginning of Section
\ref{SubsectGluings}.

\subsubsection{Zonal polynomials and pair-partitions}

For zonal and Jack polynomials we use in this article the notation from
Macdonald's book \cite{Macdonald1995}.
In particular, the zonal polynomial $Z_\lambda$ associated to the partition
$\lambda$ is the symmetric function defined by Eq.~(2.13) of
\cite[VII.2]{Macdonald1995}.
For the reader not accustomed with zonal polynomials, their property given in
Section \ref{subsec:definition} entirely determines them and
is the only one used in this paper.

Let $\lambda=(\lambda_1,\lambda_2,\dots)$ be a partition of $n$; we consider the
Young tableau $T$ of shape $2\lambda=(2\lambda_1,2\lambda_2,\dots)$ in which the
boxes are numbered consecutively along the rows. Permutations of $[2n]$ can be
viewed as permutations of the boxes of $T$. Then a pair $(S_1,S_2)$ is called
\emph{$T$-admissible} if $S_1,S_2$ are pair-partitions of $[2n]$ such that
$S\circ S_1$ preserves each column of $T$ and $S_2$ preserves each row.

\begin{theorem}\label{theo:zonal-polynomials}
With the definitions above, the zonal polynomial is given by
$$Z_\lambda = \sum_{(S_1,S_2)\ T\text{-admissible}} (-1)^{\loops(S,S_1)}\
p_{\loops(S_1,S_2)}.$$
\end{theorem}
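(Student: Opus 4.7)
My strategy is to evaluate both sides of the identity at a real symmetric $d\times d$ matrix $A=(a_{ij})$ with eigenvalues $x_1,\dots,x_d$ and to match the resulting polynomials via the characterization of $Z_\lambda$ recalled in Section~\ref{subsec:definition}. The first ingredient is the combinatorial identity
\begin{equation*}
p_{\loops(S_1,S_2)}(x_1,\dots,x_d)\;=\;\sum_{\substack{f\colon[2n]\to[d] \\ f \text{ constant on each pair of }S_2}}\;\prod_{\{i,j\}\in S_1} a_{f(i),f(j)}.
\end{equation*}
I would prove this by inspecting $\loops(S_1,S_2)$ loop by loop: along a loop of length $2\ell$, the $f$-values on its $\ell$ white vertices (the $S_2$-pairs it visits) are free indices in $[d]$, and the product of matrix entries along the $\ell$ black vertices (the $S_1$-pairs) forms a cyclic product $a_{k_1 k_2} a_{k_2 k_3}\cdots a_{k_\ell k_1}$ whose sum over indices equals $\Tr(A^\ell)=p_\ell$.

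The second ingredient is a reparametrization of the outer sum. Pair-partitions $S_1$ with $S\circ S_1\in C_T$ are in bijection with the elements $\sigma\in C_T$ satisfying $\sigma S\sigma=S$, via $\sigma=S\circ S_1$. Decomposing $C_T$ column by column, conjugation by $S$ swaps the two columns in each pair $\{2j-1,2j\}$, so such a $\sigma$ is uniquely determined by its restrictions to the odd columns $(\sigma_1,\sigma_3,\dots)\in\prod_j\Sym{\lambda'_j}$, the even-column restrictions being forced to be the $S$-transports of $\sigma_{2j-1}^{-1}$. A direct inspection of $\loops(S,S_1)$ shows that its loops stay inside column-pairs $\{2j-1,2j\}$, and that each cycle of $\sigma_{2j-1}$ of length $\ell$ produces exactly one loop of length $2\ell$; hence
\begin{equation*}
(-1)^{\loops(S,S_1)}=\prod_{j}\sgn(\sigma_{2j-1}).
\end{equation*}
In parallel, pair-partitions $S_2$ preserving the rows of $T$ are in bijection with the cosets $R_T/(R_T\cap H_n)$ via $S_2=\tau S\tau^{-1}$, where $R_T\cap H_n=\prod_i H_{\lambda_i}$.

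Substituting these parametrizations, together with the identity for $p_{\loops(S_1,S_2)}$, into the right-hand side and reorganizing, one obtains a triple sum over $f$, over $(\sigma_1,\sigma_3,\dots)\in\prod_j\Sym{\lambda'_j}$ weighted by the odd-column sign, and over a set of representatives of $R_T/(R_T\cap H_n)$, of products of matrix entries. I would then match this expression with James's formula for $Z_\lambda(A)$ stated in Section~\ref{subsec:definition}. Since the resulting equality of polynomials in $x_1,\dots,x_d$ holds for every $d$, it is an identity of symmetric functions, which is the claim.

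I expect the main obstacle to be this final matching: carefully verifying that the odd-column sign, the combinatorial weights coming from the $R_T\cap H_n$-cosets, and the overall normalization reproduce James's expression exactly. The combinatorial identity for $p_{\loops(S_1,S_2)}$ and the column-pair/row-pair parametrizations are the conceptual engines of the proof; the remaining work is essentially bookkeeping.
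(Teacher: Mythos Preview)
Your combinatorial ingredients are essentially those of the paper: the identity expressing $p_{\loops(S_1,S_2)}$ as a sum of products of matrix entries is the content of Lemma~\ref{LemPhiPsiPowerSum} (there phrased for $A=XX^T$), your sign computation for $(-1)^{\loops(S,S_1)}$ via the odd-column permutations is exactly Lemma~\ref{lem:sign}, and your coset parametrization of $S_2$ is what the paper does in Section~\ref{subsec:proof-of-theo-zonal}. So at the level of rewriting the right-hand side, you and the paper agree.

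The gap is in your last step. You say you will ``match this expression with James's formula for $Z_\lambda(A)$ stated in Section~\ref{subsec:definition}'' and call this bookkeeping. But Section~\ref{subsec:definition} contains no explicit formula for $Z_\lambda$; it only gives a \emph{characterization} of $\Omega^{(d)}_\lambda$ by three properties (normalization, $O_d$-invariance, and membership in $P_{2\lambda}$). There is nothing concrete to match against. The paper's actual argument at this point is representation-theoretic: it recognizes the reparametrized sum as $\phi_{c_{2\lambda}\Psi_S}(X)$, where $c_{2\lambda}=b_{2\lambda}a_{2\lambda}$ is the Young symmetrizer, and then invokes Schur--Weyl duality (Section~\ref{SubsectSchurWeyl}) to show that $c_{2\lambda}\Psi_S$ lies in the $2\lambda$-isotypic component of $(\R^d)^{\otimes 2n}$. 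That, together with the built-in $O_d$-invariance of the functions $\phi_{\Psi_P}$, forces the expression to be a scalar multiple of $Z_\lambda$; the constant is then fixed by comparing the coefficient of $p_1^n$. Without this isotypic-projection step (or some equivalent), your argument has no mechanism linking the combinatorial sum to $Z_\lambda$, so the ``final matching'' you flag as an obstacle is not bookkeeping but the heart of the proof.
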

This result will be proved in Section \ref{subsec:proof-of-theo-zonal}.

\begin{example}
Let $\lambda=(2,1)$ and $T=\begin{array}{c} \young(1234,56) \end{array}$.
Then $(S_1,S_2)$ is $T$-admissible if and only if:
    \begin{multline*}S_1 \in \bigg\{ \big\{ \{1,2\},\{3,4\},\{5,6\} \big\}, 
                        \big\{ \{1,6\},\{3,4\},\{2,5\} \big\}
			\bigg\}
\text{ and} \\
S_2 \in \bigg\{ \big\{
\{1,2\},\{3,4\},\{5,6\} \big\}, 
                        \big\{ \{1,3\},\{2,4\},\{5,6\} \big\}, \\
                        \big\{ \{1,4\},\{2,3\},\{5,6\} \big\} \bigg\}. 
  \end{multline*}
The first possible value of $S_1$ gives $(-1)^{\loops(S,S_1)}=1$ and the correspon\-ding types
of $\loops(S_1,S_2)$ for the three possible values of $S_2$ are, respectively,
$(1,1,1)$, $(2,1)$ and $(2,1)$. For the second value of $S_1$, the sign is given by
$(-1)^{\loops(S,S_1)}=-1$ and the types of the corresponding set-partitions
$\loops(S_1,S_2)$ are, respectively, $(2,1)$, $(3)$ and $(3)$.

Finally, one obtains $Z_{(2,1)} = p_{(1,1,1)} + p_{(2,1)} - 2 p_{(3)}$.
\end{example}

\begin{remark}\label{rem:analogue_schur}
    This theorem is an analogue of a known result on Schur symmetric
    functions:
    \begin{displaymath}
        \frac{n! \cdot s_\lambda}{\dim(\lambda)}=
	  \sum (-1)^{\sigma_1}\ p_{\type(\sigma_1 \circ \sigma_2)},
    \end{displaymath}
    where the sum runs over pairs of permutations $(\sigma_1,\sigma_2)$ of the
    boxes of the diagram $\lambda$ such that $\sigma_1$ (resp.~$\sigma_2$)
    preserves the columns (resp.~the rows) of $\lambda$ and $\type(\sigma_1
    \circ \sigma_2)$ denotes the partition describing the lengths of the cycles
    of $\sigma_1 \circ \sigma_2$. This formula is a consequence of the 
    explicit construction of the representation associated to $\lambda$
    via the Young symmetrizer. For a detailed proof, see 
    \cite{F'eray'Sniady-preprint2007}.
    In \cite{Hanlon1988}, the author tries unsuccessfully to generalize it
    to Jack polynomials by introducing some statistics on couples of 
    permutations.
    Our result shows that, at least for $\alpha=2$, a natural way to
    generalize is to use other
    combinatorial objects than permutations. 
\end{remark}

   \subsection{Zonal characters}
\label{subsect:characters}
The above formula expresses zonal polynomials in terms of power-sum
symmetric functions. In Section \ref{sec:Stanley}, we will extract the
coefficient of a given power-sum. In this way we study an analogue
of the coordinates of Schur polynomials in the power-sum basis
of the symmetric function ring. These coordinates are known
to be the irreducible characters of the symmetric group
and have a plenty of interesting properties. Some of them are (conjecturally)
generalizable to the context where Schur functions are replaced by Jack
polynomials and our results in the case of zonal polynomials go in this
direction.

      \subsubsection{Characters of symmetric groups}
For a Young diagram $\lambda$ we denote by $\rho^\lambda$ the corresponding
irreducible representation of the symmetric group $\Sym{n}$ with $n=|\lambda|$.
Any partition $\mu$ such that $|\mu|=n$ can be viewed as a conjugacy class in
$\Sym{n}$. Let $\pi_\mu\in\Sym{n}$ be any permutation from this conjugacy
class; we will denote by $\Tr \rho^\lambda(\mu):=\Tr \rho^\lambda(\pi_\mu)$ the
corresponding irreducible character value. If $m\leq n$, any permutation
$\pi\in\Sym{m}$ can be also
viewed as an element of $\Sym{n}$, we just have to add $n-m$ additional
fixpoints to $\pi$; for this reason
$$ \Tr \rho^\lambda(\mu) := \Tr \rho^\lambda\left(\mu\
1^{|\lambda|-|\mu|}\right) $$
makes sense also when $|\mu| \leq |\lambda|$.

Normalized characters of the symmetric group were defined by Ivanov and Kerov
\cite{IvanovKerov1999} as follows:
\begin{equation}
\label{eq:ivanov-kerov}
\Sigma_{\mu}^{(1)}(\lambda)= 
\underbrace{n (n-1) \cdots (n-|\mu|+1)}_{\text{$|\mu|$ factors}} 
\frac{\Tr \rho^\lambda(\mu)}{\text{dimension of $\rho^\lambda$}}
\end{equation}
(the meaning of the superscript in the notation $\Sigma_{\mu}^{(1)}(\lambda)$
will become clear later on). The novelty of the idea was to view the character
as a function $\lambda\mapsto\Sigma^{(1)}_{\mu}(\lambda)$ on the set of Young
diagrams (of any size) and to keep the conjugacy class fixed. The normalization
constants in \eqref{eq:ivanov-kerov} were chosen in such a way that the normalized
characters $\lambda\mapsto\Sigma^{(1)}_{\mu}(\lambda)$ form a linear basis (when
$\mu$ runs over the set of all partitions) of the algebra $\Lambda^\star$ of
shifted symmetric functions introduced by Okounkov and Olshanski
\cite{OkounkovOlshanski1997}, which is very rich in structure (this property is,
for example, the key point in a recent approach to study asymptotics of random
Young diagrams under Plancherel measure \cite{IvanovOlshanski2002}). In
addition, recently a combinatorial description of the quantity
\eqref{eq:ivanov-kerov} has been given \cite{Stanley-preprint2006,F'eray2010},
which is particularly suitable for study of asymptotics of character values
\cite{F'eray'Sniady-preprint2007}.

Thanks to Frobenius' formula for characters of the symmetric groups
\cite{Frobenius1900},
definition \eqref{eq:ivanov-kerov} can be rephrased using Schur functions.
We expand the Schur polynomial $s_\lambda$ in the base of the power-sum
symmetric functions $(p_\rho)$ as follows:
\begin{equation} 
\label{eq:schur}
\frac{n!\ s_\lambda}{\dim(\lambda)}=\sum_{\substack{\rho: \\ |\rho|=|\lambda|}} 
\theta^{(1)}_{\rho}(\lambda) \ p_{\rho}
\end{equation}
for some numbers $\theta^{(1)}_{\rho}(\lambda)$. Then
\begin{equation}
\label{eq:lassalle} 
\Sigma_{\mu}^{(1)}(\lambda)=\binom{|\lambda|-|\mu|+m_1(\mu)}{m_1(\mu)}
\ z_\mu \ \theta^{(1)}_{\mu,1^{|\lambda|-|\mu|}}(\lambda), 
\end{equation}
where
$$z_\mu = \mu_1 \mu_2 \cdots \ m_1(\mu)!\ m_2(\mu)! \cdots$$
and $m_i(\mu)$ denotes the multiplicity of $i$ in the partition $\mu$.

      \subsubsection{Zonal and Jack characters}

In this paragraph we will define analogues of the quantity
$\Sigma_{\mu}^{(1)}(\lambda)$ via Jack polynomials. First of all, as there are
several of them, we have to fix a normalization for Jack polynomials. In our
context, the best is to use the functions denoted by $J$ in the book of
Macdonald \cite[VI, (10.22)]{Macdonald1995}. With this normalization, one has
\begin{align*}
 J^{(1)}_\lambda&=\frac{n!\ s_\lambda}{\dim(\lambda)}, \\
 J^{(2)}_\lambda&=Z_\lambda.
\end{align*}

If in \eqref{eq:schur}, we replace the left-hand side by Jack polynomials:
\begin{equation} 
\label{eq:jack-characters}
J_\lambda^{(\alpha)}=\sum_{\substack{\rho: \\
|\rho|=|\lambda|}} 
\theta_{\rho}^{(\alpha)}(\lambda)\ p_{\rho} 
\end{equation}
then in analogy to \eqref{eq:lassalle}, we define
\begin{displaymath}
\Sigma_{\mu}^{(\alpha)}(\lambda)=
\binom{|\lambda|-|\mu|+m_1(\mu)}{ m_1(\mu) }
\ z_\mu \ \theta^{(\alpha)}_{\mu,1^{|\lambda|-|\mu|}}(\lambda).
\end{displaymath}

These quantities are called \emph{Jack characters}.
Notice that for $\alpha=1$ we recover the usual
normalized character values of the symmetric groups. The case $\alpha=2$ is
of central interest in this article, since then the left-hand side of
\eqref{eq:jack-characters} is equal to the zonal polynomial; for this reason
$\Sigma_{\mu}^{(2)}(\lambda)$ will be called \emph{zonal character}.

Study of Jack characters has been initiated by Lassalle
\cite{Lassalle2008a,Lassalle2009}. Just like the usual normalized characters
$\Sigma_{\mu}^{(1)}$, they are ($\alpha$-)shifted symmetric functions 
\cite[Proposition 2]{Lassalle2008a} as well, which is a good hint that they
might be an interesting generalization of the characters.
The names \emph{zonal characters} and
\emph{Jack characters} are new; we decided to introduce them because quantities
$\Sigma_{\mu}^{(\alpha)}(\lambda)$ are so interesting that they deserve a
separate name. One could argue that this name is not perfect since
Jack characters are not \textit{sensu stricto} characters in the sense of the
representation theory (as opposed to, say, zonal characters which are closely
related to the zonal spherical functions and therefore are a natural extension
of the characters in the context of Gelfand pairs).
On the other hand, as we shall see, Jack
characters conjecturally share many interesting properties with the usual 
and zonal characters of
symmetric groups, therefore the former can be viewed as interpolation of the
latter which justifies to some extent their new name.

\subsection{The main result 2: combinatorial formulas for zonal characters}

      \subsubsection{Zonal characters in terms of numbers of colorings
                     functions}

Let $S_0$, $S_1$, $S_2$ be three pair-partitions of the set $[2k]$.
We consider the following function on the set of Young diagrams:

\begin{definition}
    Let $\lambda$ be a partition of any size. We define
    $N^{(1)}_{S_0,S_1,S_2}(\lambda)$ as the number of functions $f$ from
    $[2k]$ to the boxes of the Young diagram $\lambda$ such that
    for every $l\in[2k]$:
    \begin{enumerate}[label=(Q\arabic*)]
        \addtocounter{enumi}{-1}
        \item
            \label{cond:Q0}
            $f(l)=f(S_0(l))$, in other words $f$ can be viewed as a function on
            the set of pairs constituting $S_0$;
        \item
            \label{cond:Q1}
            $f(l)$ and $f(S_1(l))$ are in the same column;
        \item
            \label{cond:Q2} $f(l)$ and $f(S_2(l))$ are in the same row.
    \end{enumerate}
    \label{def:Na}
\end{definition}

Note that $\lambda\mapsto N^{(1)}_{S_0,S_1,S_2}(\lambda)$ is, in general, not
a shifted symmetric function, so it cannot be expressed in terms of  zonal
characters. On the other hand, the zonal characters have a
very nice expression in terms of the $N$ functions:
\begin{theorem}\label{theo:ZonalFeraySniady-A1}
    Let $\mu$ be a partition of the integer $k$ and $(S_1,S_2)$ be a fixed
    couple of pair-partitions of the set $[2k]$ of type $\mu$.
    Then one has the following equality between functions on the set of Young
    diagrams:
    \begin{equation}
        \Sigma^{(2)}_\mu =\frac{1}{2^{\ell(\mu)}}
                \sum_{S_0} (-1)^{\loops(S_0,S_1)}\
                  2^{|\loops(S_0,S_1)|}\ N^{(1)}_{S_0,S_1,S_2},
        \label{eq:ZonalFeraySniady-A1}
    \end{equation}
    where the sum runs over pair-partitions of\/ $[2k]$ and
    $\ell(\mu)$ denotes the number of parts of partition $\mu$.
\end{theorem}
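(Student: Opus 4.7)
The proof derives Theorem~\ref{theo:ZonalFeraySniady-A1} from Theorem~\ref{theo:zonal-polynomials}. Set $n=|\lambda|$ and $k=|\mu|$. Specializing \eqref{eq:lassalle} to $\alpha=2$ gives $\Sigma^{(2)}_\mu(\lambda) = \binom{n-k+m_1(\mu)}{m_1(\mu)}\, z_\mu\, \theta^{(2)}_{\mu 1^{n-k}}(\lambda)$, and by Theorem~\ref{theo:zonal-polynomials} the coefficient $\theta^{(2)}_{\mu 1^{n-k}}(\lambda)$ equals the signed count of $T$-admissible pair-partitions $(S_1', S_2')$ of $[2n]$ with $\loops(S_1', S_2')$ of type $\mu \cup 1^{n-k}$, the sign being $(-1)^{\loops(S, S_1')}$.

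The strategy is to re-express this count using the functions $N^{(1)}_{S_0, S_1, S_2}$, keeping the abstract pair-partitions $(S_1, S_2)$ of type $\mu$ fixed throughout. To each admissible pair one associates an injection $\iota:[2k] \hookrightarrow [2n]$ realizing the non-trivial part of $(S_1', S_2')$ as the $\iota$-transport of $(S_1, S_2)$ and whose image is a union of $S$-pairs; the derived pair-partition $S_0 := \iota^{-1}(S|_{\iota([2k])})$ of $[2k]$; and the function $f$ from $[2k]$ to the boxes of $\lambda$ sending $l$ to the box associated to the $S$-pair containing $\iota(l)$. By construction $f$ satisfies \ref{cond:Q0} with respect to $S_0$; the row-preservation by $S_2'$ translates into \ref{cond:Q2}; and the column-preservation by $S \circ S_1'$ decomposes into \ref{cond:Q1} together with a parity constraint: the ``side'' (left or right within its $S$-pair) of $\iota(l)$ must be constant on each orbit of $S_0 \circ S_1$. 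A careful analysis on each loop of $\loops(S_0, S_1)$, whose two $(S_0 S_1)$-orbits are forced to have opposite sides, shows that exactly $2^{|\loops(S_0, S_1)|}$ orientations of $\iota$ are compatible with a given pair $(S_0, f)$.

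Collecting the combinatorial factors: the automorphism multiplicity equals $|\Aut(S_1, S_2)| = z_\mu\, 2^{\ell(\mu)}$, which cancels the $z_\mu$ from \eqref{eq:lassalle} and leaves the prefactor $1/2^{\ell(\mu)}$; the freedom to choose which $m_1(\mu)$ of the trivial loops of $(S_1', S_2')$ correspond to those of the abstract structure produces a factor $\binom{n-k+m_1(\mu)}{m_1(\mu)}$ that cancels its companion in \eqref{eq:lassalle}; and the sign identity $(-1)^{\loops(S, S_1')} = (-1)^{\loops(S_0, S_1)}$ holds because the trivial loops of $\loops(S, S_1')$ contribute $+1$ and the non-trivial ones biject with those of $\loops(S_0, S_1)$ via $\iota$. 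Assembling everything yields the right-hand side of~\eqref{eq:ZonalFeraySniady-A1}.

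The main obstacle is the parity analysis on the loops of $\loops(S_0, S_1)$: one must verify that the column-preservation constraint produces exactly one independent binary choice per loop, giving the factor $2^{|\loops(S_0, S_1)|}$ rather than the naive $2^k$ coming from independent choices at each $S_0$-pair. This requires tracking how the $S$-pair side of a label evolves under $S \circ S_1'$ along the loop. A secondary difficulty is showing that the ``virtual'' contributions to the $N^{(1)}$-based sum, coming from functions $f$ that do not factor injectively through $[2k]/S_0$ and hence cannot arise from any actual injection $\iota$, cancel out upon summation over $S_0$ with the prescribed signs and powers of~$2$.
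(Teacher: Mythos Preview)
Your proposal is correct and follows essentially the same route as the paper: extract the coefficient of $p_{\mu 1^{n-k}}$ from Theorem~\ref{theo:zonal-polynomials}, convert admissible pairs into data $(S_0,f)$ on $[2k]$, pick up the factor $2^{|\loops(S_0,S_1)|}$ from the column-parity (``side'') freedom, and then cancel the non-injective contributions via a sign-reversing argument. The only organisational difference is that the paper averages over bijective fillings $T$ of $2\lambda$ while keeping an extended $(\widetilde{S_1},\widetilde{S_2})$ fixed (so the filling itself plays the role of your $\iota$ and $f$ simultaneously, and your $S_0$ appears as $S(T)$), and it packages the parity analysis as the separate identity $N^{(2)}_{S_0,S_1,S_2}=2^{|\loops(S_0,S_1)|}N^{(1)}_{S_0,S_1,S_2}$ (Lemma~\ref{lem:N2_N1}); the content is identical to what you describe.
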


We postpone the proof to Sections
\ref{subsec:reformulation}--\ref{subsec:forgetting-injectivity}.
This formula is an intermediate step towards Theorem \ref{theo:ZonalFeraySniady-B},
but we wanted to state it as an independent result because its analogue for the usual
characters \cite[Theorem 2]{F'eray'Sniady-preprint2007} has been quite useful in
some contexts (see \cite{F'eray'Sniady-preprint2007,F'eray2009}).

\begin{example}
    Let us consider the case $\mu=(2)$.
    We fix $S_1=\big\{ \{1,2\},\{3,4\} \big\}$ and 
    $S_2=\big\{ \{1,4\},\{2,3\} \big\}$.
    Then $S_0$ can take three possible values: $S_1$, $S_2$ and 
    $S_3:= \big\{ \{1,3\},\{2,4\} \big\}$.

    If $S_0=S_1$, condition \ref{cond:Q0} implies condition \ref{cond:Q1}.
    Moreover, conditions \ref{cond:Q0} and \ref{cond:Q2} imply that the images
    of all elements are in the same row.
    Therefore $N^{(1)}_{S_1,S_1,S_2}(\lambda)$ is equal to the number of ways
    to choose two boxes in the same row of $\lambda$: one is the image of $1$ 
    and $2$ and the other the image of $3$ and $4$. It follows that
    \[N^{(1)}_{S_1,S_1,S_2}(\lambda) = \sum_i \lambda_i^2.\]

    In a similar way, $N^{(1)}_{S_2,S_1,S_2}(\lambda)$ is the number of ways
    to choose two boxes in the same column of $\lambda$: one is the image of $1$
    and $4$ and the other the image of $2$ and $3$. It follows that
    \[N^{(1)}_{S_2,S_1,S_2}(\lambda) = \sum_i (\lambda'_i)^2,\]
    where $\lambda'$ is the conjugate partition of $\lambda$.

    Consider the last case $S_0=S_3$. 
    Conditions \ref{cond:Q0} and \ref{cond:Q2} imply that the images
    of all elements are in the same row. 
    Besides, conditions \ref{cond:Q0} and \ref{cond:Q1} imply that the images
    of all elements are in the same column.
    So all elements must be matched to the same box and the number of functions
    fulfilling the three properties is simply the number of boxes of $\lambda$.

    Finally,
    \begin{equation}
        \Sigma^{(2)}_{(2)}(\lambda) = 2 \left( \sum_i \lambda_i^2 \right)
            - \left( \sum_i (\lambda'_i)^2 \right) - |\lambda|.
        \label{EqExSigmaN}
    \end{equation}
    If we denote $n(\lambda)= \sum_i \binom{\lambda'_i}{2}$
    \cite[equation (I.1.6)]{Macdonald1995}, this can be rewritten as:
    \begin{align*}
        \Sigma^{(2)}_{(2)}(\lambda) &= 2 (2n(\lambda') + |\lambda|)
            - (2n(\lambda) + |\lambda|) - |\lambda|  \\
	    &= 4 n(\lambda') - 2 n(\lambda).
    \end{align*}
    The last equation corresponds to the case $\alpha=2$ of Example 1b.~of 
    paragraph VI.10 of Macdonald's book \cite{Macdonald1995}.
\end{example}

          \subsubsection{Zonal characters in terms of Stanley's coordinates}
          \label{SubsectZonalCharPQ}

The notion of Stanley's coordinates was introduced by Stanley
\cite{Stanley2003/04} who found a nice formula for normalized irreducible character
values of the symmetric group corresponding to rectangular Young diagrams. In
order to generalize this result, he defined, given two sequences $\pp$ and $\qq$
of positive integers of same size ($\qq$ being non-increasing), the partition:
$$ \pp\times \qq = (\underbrace{q_1,\dots,q_1}_\text{$p_1$ times},\dots,
\underbrace{q_l,\dots,q_l}_\text{$p_l$ times} ).$$
Then he suggested to consider the quantity $\Sigma^{(1)}_\mu(\pp \times \qq)$
as a polynomial in $\pp$ and $\qq$. An explicit combinatorial interpretation
of the coefficients was conjectured in \cite{Stanley-preprint2006} and proved
in \cite{F'eray2010}.


It is easy to deduce from the expansion of $\Sigma^{(2)}_\mu$ in terms of the
$N$ functions a combinatorial description of the polynomial
$\Sigma^{(2)}_\mu(\pp \times \qq)$.

\begin{theorem} \label{theo:ZonalFeraySniady-B}
    Let $\mu$ be a partition of the integer $k$ and $(S_1, S_2)$ be a fixed
    couple of pair-partitions of\/ $[2k]$ of type $\mu$.
    Then:
    \begin{multline}
        \Sigma^{(2)}_\mu(\pp \times \qq) = \\ \frac{(-1)^k}{2^{\ell(\mu)}}
 \sum_{S_0} \left[
\sum_{\phi: \loops(S_0,S_2) \to \N^\star}\
\prod_{l\in\loops(S_0,S_2)}(p_{\varphi(l)}) \cdot
\prod_{l'\in \loops(S_0,S_1)}(- 2q_{\psi(l')})
\right],
        \label{eq:ZonalFeraySniady-B}
    \end{multline}
    where $\psi(l'):=\max\limits_l \varphi(w)$ with $l$ running over the loops of
    $\loops(S_0,S_1)$
    having at least one element in common with $l'$.
\end{theorem}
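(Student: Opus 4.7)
The plan is to derive the formula directly from Theorem \ref{theo:ZonalFeraySniady-A1}: the only extra work is to evaluate $N^{(1)}_{S_0,S_1,S_2}(\pp\times\qq)$ as a polynomial in $\pp$ and $\qq$ via a direct combinatorial count, and then to reshuffle signs and powers of $2$.

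First I would reinterpret the three conditions of Definition \ref{def:Na} in terms of row and column coordinates. Writing $f(i)=(r(i),c(i))$, conditions \ref{cond:Q0} and \ref{cond:Q2} together say that $r$ is constant on each loop of $\loops(S_0,S_2)$, while conditions \ref{cond:Q0} and \ref{cond:Q1} say that $c$ is constant on each loop of $\loops(S_0,S_1)$. Thus enumerating admissible $f$ amounts to choosing a row for each loop of $\loops(S_0,S_2)$ and a column for each loop of $\loops(S_0,S_1)$, subject to the constraint that for every $i\in[2k]$ the pair $(r(i),c(i))$ lies in $\lambda$.

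Next I would exploit the block structure of $\lambda=\pp\times\qq$: the rows split into blocks, block $i$ consisting of $p_i$ rows of common length $q_i$. A row assignment to a loop $l\in\loops(S_0,S_2)$ decomposes as the choice of a block index $\varphi(l)\in\N^\star$ (with the convention $p_i=0$ if $i$ exceeds the length of $\pp$), followed by a choice among $p_{\varphi(l)}$ rows. For a loop $l'\in\loops(S_0,S_1)$, the column $c(l')$ must satisfy $c(l')\le q_{\varphi(l)}$ for every row-loop $l$ sharing an element with $l'$; since $\qq$ is non-increasing, this is equivalent to $c(l')\le q_{\psi(l')}$, giving exactly $q_{\psi(l')}$ admissible columns. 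Summing independently over $\varphi$ yields
\begin{equation*}
N^{(1)}_{S_0,S_1,S_2}(\pp\times\qq)
= \sum_{\varphi:\loops(S_0,S_2)\to\N^\star}\
\prod_{l\in\loops(S_0,S_2)} p_{\varphi(l)}\
\prod_{l'\in\loops(S_0,S_1)} q_{\psi(l')}.
\end{equation*}

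Finally, I substitute this expression into Theorem \ref{theo:ZonalFeraySniady-A1}. Using the identity $(-1)^{\loops(S_0,S_1)}=(-1)^{k-|\loops(S_0,S_1)|}$ from the definition of the sign, the weight rewrites as
\begin{equation*}
(-1)^{\loops(S_0,S_1)}\,2^{|\loops(S_0,S_1)|}
= (-1)^k\,(-2)^{|\loops(S_0,S_1)|},
\end{equation*}
and distributing the factor $(-2)^{|\loops(S_0,S_1)|}$ across the product over column-loops produces exactly $\prod_{l'}(-2\,q_{\psi(l')})$, matching the claimed formula. The only truly conceptual step is the geometric interpretation of $\psi$; everything else is mechanical rewriting, so I expect no obstacle beyond checking that the max-over-intersecting-row-loops prescription is the right way to encode the row-column compatibility constraint for rectangular blocks.
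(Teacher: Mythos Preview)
Your proposal is correct and mirrors the paper's argument essentially step for step: the paper also deduces Theorem \ref{theo:ZonalFeraySniady-B} from Theorem \ref{theo:ZonalFeraySniady-A1} together with the computation of $N^{(1)}_{S_0,S_1,S_2}(\pp\times\qq)$ (Lemma \ref{lem:N1_pq}), carried out by the same decomposition into a choice of block index $\varphi(l)$ for each row-loop and a column for each column-loop, with the constraint $c(l')\le q_{\psi(l')}$ coming from the non-increasingness of $\qq$. Your final sign-and-power-of-$2$ rewriting is exactly what is needed.
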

We postpone the proof until Section \ref{subsec:N-in-terms}.

\begin{example}
\label{ex:example}
    We continue the previous example in the case $\mu=(2)$.

    When $S_0=S_1$, the graph $\loops(S_0,S_2)$ has only one loop, thus we sum
    over index $i \in \N^\star$.
    The graph $\loops(S_0,S_1)$ has two loops in this case, whose images by
    $\psi$ are both $i$.
    So the expression in the square brackets for $S_0=S_1$ is equal to:
    \[4 \sum_i p_i q_i^2.\]

    When $S_0=S_2$, the graph $\loops(S_0,S_2)$ has two loops, thus we sum over
    couples $(i,j)$ in $(\N^\star)^2$.
    The graph $\loops(S_0,S_1)$ has only one loop, which has elements in common
    with both loops of $\loops(S_0,S_2)$ and thus its image by $\psi$ is
    $\max(i,j)$.
    Therefore, the expression in the brackets can be written in this case as:
    \[-2 \sum_{i,j} p_i p_j q_{\max(i,j)}.\]

    When $S_0=S_3$, both graphs $\loops(S_0,S_2)$ and $\loops(S_0,S_1)$ have
    only one loop.
    Thus we sum over one index $i \in \N^\star$ which is the image by $\varphi$
    and $\psi$ of these loops.
    In this case the expression in the brackets is simply equal to:
    \[-2 \sum_{i,j} p_i q_i. \]

    Finally, in this case, Eq.~\eqref{eq:ZonalFeraySniady-B} becomes:
    \[ \Sigma^{(2)}_{(2)}(\pp \times \qq) = 2 \sum_i p_i q_i^2
        - \sum_{i,j} p_i p_j q_{\max(i,j)} - \sum_i p_i q_i. \]
    It matches the numerical data given by M. Lassalle in 
    \cite[top of page 3]{Lassalle2008a} (one has to change the signs and
    substitute $\beta=1$ in his formula).
\end{example}

            \subsection{Kerov polynomials}

      \subsubsection{Free cumulants}
For a Young diagram $\lambda=(\lambda_1,\lambda_2,\dots)$ and an integer
$s\geq 1$ we consider the dilated Young diagram
$$ D_s \lambda = (\underbrace{s\lambda_1,\dots,s\lambda_1}_{\text{$s$ times}},
\underbrace{s\lambda_2,\dots,s\lambda_2}_{\text{$s$ times}},\dots).$$
If we interpret the Young diagrams geometrically as collections of boxes then
the dilated diagram $D_s \lambda$ is just the image of $\lambda$ under scaling
by factor $s$.

This should not be confused with
$$\alpha \lambda = (\alpha \lambda_1,\alpha \lambda_2,\dots) $$
which is the Young diagram stretched anisotropically only along the $OX$ axis.

Note that, as Jack characters are polynomial functions on Young diagrams,
they can be defined on non-integer dilatation or anisotropical stretching of Young diagrams
(in fact, they can be defined on any generalized Young diagrams,
see \cite{DolegaF'eray'Sniady2008} for details).
In the case of zonal characters, this corresponds to writing
Theorem~\ref{theo:ZonalFeraySniady-B} for sequences $\pp$ and $\qq$ with
non-integer terms.

Following Biane \cite{Biane1998} (who used a different, but equivalent
definition), for a Young diagram $\lambda$ we define its \emph{free cumulants}
$R_2(\lambda),R_3(\lambda),\dots$ by the formula
$$R_{k}(\lambda) = \lim_{s\to\infty} \frac{1}{s^{k}}
\Sigma_{k-1}^{(1)}(D_s\lambda).$$
In other words, each free cumulant $R_{k}(\lambda)$ is asymptotically the
dominant term of the character on a cycle of length $k-1$ in the limit when the
Young diagram tends to infinity. It is natural to generalize this definition
using Jack characters:
$$R_{k}^{(\alpha)}(\lambda) = \lim_{s\to\infty} \frac{\alpha}{(\alpha s)^{k}}
\Sigma_{k-1}^{(\alpha)}(D_s\lambda).$$

In fact, the general $\alpha$ case can be expressed simply in terms of the
usual free cumulant thanks to \cite[Theorem 7]{Lassalle2009}:
$$R_{k}^{(\alpha)}(\lambda) = \frac{1}{\alpha^k} R_k(\alpha \lambda).$$
The quantities $R_{k}^{(\alpha)}(\lambda)$ are called $\alpha$-anisotropic free
cumulants of the Young diagram $\lambda$.

With this definition free cumulants might seem to be rather abstract quantities,
but in fact they could be equivalently defined in a very explicit way using
the shape of the diagram and linked to free probability, whence their name,
see \cite{Biane1998}. The equivalence of these two descriptions
makes them very useful parameters for describing Young diagrams.
Moreover, Proposition 2 and the Theorem of section 3 in \cite{Lassalle2008a}
imply that they form a homogeneous algebraic basis of the ring of shifted
symmetric functions. Therefore many interesting functions can be written in terms of
free cumulants. These features make free cumulants a perfect tool in the study
of asymptotic problems in representation theory, see for example
\cite{Biane1998,'Sniady2006c}.

\subsubsection{Kerov polynomials for Jack characters}
\label{subsect:classical_results_Kerov}
The following observation is due to Lassalle \cite{Lassalle2009}. Let $k\geq 1$
be a fixed integer and let $\alpha$ be fixed. Since
$\Sigma^{(\alpha)}_{k}$ is an $\alpha$-shifted
symmetric function and the anisotropic free cumulants
$(R_{l}^{(\alpha)})_{l\geq 2}$ form an algebraic basis of the ring of
$\alpha$-shifted symmetric functions, there exists a
polynomial $K^{(\alpha)}_k$ such that, for any Young diagram $\lambda$,
$$\Sigma^{(\alpha)}_{k}(\lambda)=
K^{(\alpha)}_k\big(R_2^{(\alpha)}(\lambda),R_3^{(\alpha)}(\lambda),
\ldots\big).$$
This polynomial is called \emph{Kerov polynomial for Jack character}.

Thus Kerov polynomials for Jack characters express Jack characters on cycles in terms of free
cumulants. For more complicated conjugacy classes it turns out to be more
convenient to express not directly the characters
$\Sigma^{(\alpha)}_{(k_1,\dots,k_\ell)}$ but rather \emph{cumulant}
\begin{displaymath}
 (-1)^{\ell-1} \kappa^{\text{id}}( \Sigma^{(\alpha)}_{k_1},
\dots, \Sigma^{(\alpha)}_{k_\ell} ).
\end{displaymath}
This gives rise to \emph{generalized Kerov polynomials for Jack characters}, denoted
$K^{(\alpha)}_{(k_1,\dots,k_\ell)}$. In the classical context $\alpha=1$ these
quantities have been introduced by one of us and Rattan
\cite{Rattan'Sniady2008}; in the Jack case they have been studied by Lassalle
\cite{Lassalle2009}. We skip the definitions and refer to the above papers for
details since generalized Kerov polynomials are not of central interest for
this paper.

      \subsubsection{Classical Kerov polynomials}
For $\alpha=1$ these polynomials are called simply \emph{Kerov polynomials}.
This case has a much longer history and it was initiated by Kerov
\cite{Kerov2000talk} and Biane \cite{Biane2003} who proved that in this
case the coefficients are in fact integers and conjectured their non-negativity.
This conjecture has been proved by the first-named author \cite{F'eray2009},
also for generalized Kerov polynomials.
Then, an explicit combinatorial interpretation has been
given by the authors, together with Do{\l}\k{e}ga, in
\cite{DolegaF'eray'Sniady2008}, using a different method.

These polynomials have a
deep structure, from a combinatorial and analytic point of view, and there are
still open problems concerning them. For a quite comprehensive bibliography on
this subject we refer to \cite{DolegaF'eray'Sniady2008}.

Most of properties of Kerov polynomials seem to be generalizable in the case of
a general value of the parameter $\alpha$, although not much has been
proved for the moment (see \cite{Lassalle2009}).

      \subsection{The main result 3: Kerov's polynomials for zonal characters}

As in the classical setting, the coefficients of zonal Kerov polynomials
have a nice combinatorial interpretation,
analogous to the one from \cite{DolegaF'eray'Sniady2008}.
Namely, if we denote $\big[x_1^{v_1} \cdots x_t^{v_t}\big] P$ the coefficient of 
$x_1^{v_1} \cdots x_t^{v_t}$ in $P$, we show the following result.

\begin{theorem}
\label{theo:kerov}
    Let $\mu$ be a partition of the integer $k$ and $(S_1, S_2)$ be a fixed
    couple of pair-partitions of\/ $[2k]$ of type $\mu$.
    Let $s_2,s_3,\dots$ be a sequence of non-negative
    integers with only finitely many non-zero elements.

Then the rescaled coefficient
$$  (-1)^{|\mu|+\ell(\mu)+2s_2+3s_3+\cdots}\ 2^{\ell(\mu) - (2s_2+3s_3+\cdots)}
\left[ \left(R_2^{(2)}\right)^{s_2}
\left(R_3^{(2)}\right)^{s_3} \cdots\right]
K^{(2)}_\mu $$ of the (generalized)
zonal Kerov polynomial is equal to the number of pairs
$(S_0,q)$ with the following properties:
\begin{enumerate}[label=(\alph*)]
 \item \label{enum:first-condition}
$S_0$ is a pair-partition of\/ $[2k]$ such that the three involutions
corres\-ponding to
$S_0$, $S_1$ and $S_2$ generate a transitive subgroup of $\Sym{2k}$;
 \item \label{enum:ilosc2} the number of loops in $\loops(S_0,S_1)$ is
equal to $s_2+s_3+\cdots$;
 \item \label{enum:boys-and-girls} the number of loops in $\loops(S_0,S_2)$ is equal
to $ s_2+2 s_3+3 s_4+\cdots$;
 \item \label{enum:kolorowanie} $q$ is a function from the set $\loops(S_0,S_1)$ 
 to the set $\{2,3,\dots\}$; we require that each
number
$i\in\{2,3,\dots\}$ is used exactly $s_i$ times;
\item \label{enum:marriage}
for every subset $A \subset \loops(S_0,S_1)$ 
which is nontrivial ({\it i.e.}, $A\neq\emptyset$ and $A \neq \loops(S_0,S_1)$), there are
more than $\sum_{v \in A} \big( q(v)-1 \big)$ loops in $\loops(S_0,S_2)$ which have
a non-empty intersection with at least one loop from $A$.
\end{enumerate}
\end{theorem}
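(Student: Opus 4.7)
The plan is to mimic the strategy of \cite{DolegaF'eray'Sniady2008} for classical Kerov polynomials, using Theorem~\ref{theo:ZonalFeraySniady-B} as the pair-partition analogue of Stanley's formula. First, I would expand the right-hand side of \eqref{eq:ZonalFeraySniady-B} as a formal sum over triples $(S_0, \varphi, \psi)$ producing the monomial $\prod_{l\in\loops(S_0,S_2)} p_{\varphi(l)} \prod_{l'\in\loops(S_0,S_1)}(-2 q_{\psi(l')})$. Passing from $\Sigma^{(2)}_\mu$ to the generalized Kerov polynomial means replacing the character by the cumulant $(-1)^{\ell-1}\kappa^{\mathrm{id}}(\Sigma^{(2)}_{k_1},\ldots,\Sigma^{(2)}_{k_\ell})$; the standard multiplicative/connectedness argument then shows that taking cumulants exactly annihilates disconnected contributions and leaves only those $S_0$ for which the triple $(S_0, S_1, S_2)$ generates a transitive subgroup of $\Sym{2k}$, which is precisely condition~\ref{enum:first-condition}.

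Next I would perform the change of variables from Stanley coordinates to free cumulants. The identity $R_k^{(2)}(\lambda) = 2^{-k} R_k(2\lambda)$, combined with the observation that doubling the parts of $\pp\times\qq$ amounts to replacing $\qq$ by $2\qq$, reduces the problem to the classical expansion of $R_k$ in Stanley coordinates studied in \cite{Biane1998,DolegaF'eray'Sniady2008}. The top-homogeneous part of $R_k(\pp\times\qq)$ in the natural bi-grading is essentially $-\sum_i p_i q_i^{k-1}$, so extracting the coefficient of $\big(R_2^{(2)}\big)^{s_2}\big(R_3^{(2)}\big)^{s_3}\cdots$ amounts to extracting the top-degree coefficient of a specific monomial in the $q_i$'s, and this is recorded by a function $q \colon \loops(S_0,S_1) \to \{2,3,\ldots\}$ with $s_i$ preimages of~$i$. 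The degree counts immediately yield conditions~\ref{enum:ilosc2}, \ref{enum:boys-and-girls}, and~\ref{enum:kolorowanie}: the number of $p$-factors, which is $|\loops(S_0,S_2)|$, must equal $s_2+2s_3+\cdots$, while the number of $q$-factors, which is $|\loops(S_0,S_1)|$, must equal the total number of $R$-factors $s_2+s_3+\cdots$.

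The combinatorial heart of the argument is the marriage condition~\ref{enum:marriage}. When extracting the leading coefficient of the prescribed monomial in the $q$'s, one must be able to inject each loop $l' \in \loops(S_0,S_1)$ carrying the value $q(l')$ into a family of $q(l')-1$ distinct loops of $\loops(S_0,S_2)$ that $l'$ intersects, since the max-assignment $\psi$ of Theorem~\ref{theo:ZonalFeraySniady-B} is what produces the $q$-variable exponent associated to $R_{q(l')+1}^{(2)}$; Hall's marriage theorem then converts this existence into the deficiency inequality stated as~\ref{enum:marriage}. The remaining prefactor $(-1)^{|\mu|+\ell(\mu)+2s_2+3s_3+\cdots}\ 2^{\ell(\mu) - (2s_2+3s_3+\cdots)}$ is recovered by combining the leading factor $(-1)^k/2^{\ell(\mu)}$ and the $(-2)$ attached to each $q$-variable in \eqref{eq:ZonalFeraySniady-B} with the $2^{-k}$ rescaling in the definition of $R_k^{(2)}$ and the sign conventions of cumulants.

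The main obstacle I expect is the coefficient-extraction step producing condition~\ref{enum:marriage}. In the classical case treated in \cite{DolegaF'eray'Sniady2008}, this required a delicate combinatorial argument organized around an auxiliary matching between loops, and one has to verify that the pair-partition framework, where the bipartite edge-labelled graphs $\loops(\cdot,\cdot)$ replace permutations on the boxes of~$\lambda$, supports an analogous step-by-step transposition. The secondary difficulty is the bookkeeping of signs and powers of~$2$, which are absent in the classical setting and must now be tracked consistently through the $(-2)$'s produced by Theorem~\ref{theo:ZonalFeraySniady-B} and the rescaling $R_k^{(2)}(\lambda) = 2^{-k} R_k(2\lambda)$.
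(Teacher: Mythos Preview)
Your proposal is correct and follows essentially the same route as the paper. The only organizational difference is that the paper does not work directly in Stanley coordinates: it starts from Theorem~\ref{theo:ZonalFeraySniady-A1} (the $N^{(1)}$-formula), performs the anisotropic rescaling $\lambda\mapsto\frac12\lambda$ so as to land on ordinary free cumulants $R_i$, and then invokes a single black-box coefficient-extraction lemma (Lemma~\ref{lem:extract-kerov}) stated for arbitrary linear combinations $\sum_G m_G N^{(1)}_G$, whose proof is declared to carry over verbatim from \cite{DolegaF'eray'Sniady2008}. Your Hall-marriage discussion and degree bookkeeping are precisely the content of that lemma, and your tracking of the powers of $2$ via $R_k^{(2)}(\lambda)=2^{-k}R_k(2\lambda)$ matches the paper's change of variables $R_i=2^iR_i^{(2)}$; so the two arguments differ only in packaging.
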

Condition \ref{enum:marriage} can be reformulated in a number of equivalent
ways \cite{DolegaF'eray'Sniady2008}.
This result will be proved in Section \ref{sec:kerov}.

\begin{example}
    We continue the previous example: $\mu=(2)$,
    $S_1=\big\{ \{1,2\},\{3,4\} \big\}$ and 
    $S_2=\big\{ \{1,4\},\{2,3\} \big\}$.
    Recall that $S_0$ can take three values ($S_1$, $S_2$ and another
    value $S_3= \big\{ \{1,3\},\{2,4\} \big\}$).
    In each case, condition \ref{enum:first-condition} is fulfilled.
    The number of loops in $\loops(S_0,S_1)$ and $\loops(S_0,S_2)$
    in each case was already calculated in the example on page \pageref{ex:example};
    from the discussion there it follows as well that
    any $\ell \in\loops(S_0,S_1)$ and any $\ell' \in \loops(S_0,S_2)$
    have a non-empty intersection.

    \begin{itemize}
        \item If $S_0=S_2$ (resp.~$S_0=S_3$), conditions \ref{enum:ilosc2}, \ref{enum:boys-and-girls},
            \ref{enum:kolorowanie} and \ref{enum:marriage} are fulfilled for
            $(s_2,s_3,\dots)=(0,1,0,0,\dots)$ (respectively, $(s_2,s_3,\dots)=(1,0,0,\dots)$)
            and $q$ associating $3$ (resp.~$2$) to the unique loop of $\loops(S_0,S_1)$.
        \item If $S_0=S_1$, conditions \ref{enum:ilosc2} and \ref{enum:boys-and-girls}
            cannot be fulfilled at the same time for any sequence $(s_i)$ because
            this would imply \[2 = |\loops(S_0,S_1)| \leq |\loops(S_0,S_2)| = 1.\]        
    \end{itemize}
    Finally, all coefficients of $K_{(2)}^{(2)}$ are equal to $0$, except for:
    \begin{align*}
        \frac{-1}{2}\ [R_2^{(2)}] K_{(2)}^{(2)} &= 1; \\
        \frac{1}{4}\ [R_3^{(2)}] K_{(2)}^{(2)} &= 1.
    \end{align*}
    In other terms,
    \[ K_{(2)}^{(2)} = 4 R_3^{(2)} - 2 R_2^{(2)}. \]
    This fits with Lassalle's data \cite[top of page 2230]{Lassalle2009}.
\end{example}

   \subsection{Symplectic zonal polynomials}
As mentioned above, the case $\alpha=\frac{1}{2}$ is also special for Jack
polynomials, as we recover the so-called symplectic zonal polynomials. These
polynomials appear in a quaternionic analogue of James' theory, see
\cite[VII.6]{Macdonald1995}.

The symplectic zonal and zonal cases are linked by the duality
formula for Jack characters (see \cite[Chapter VI, equation
(10.30)]{Macdonald1995}):
\begin{equation}
    \label{eq:OneOverAlpha}
\theta_\rho^{(\alpha)}(\lambda) = (- \alpha)^{|\rho| - \ell(\rho)}
\ \theta_\rho^{(\alpha^{-1})}(\lambda'),
\end{equation}
where 
$\lambda'$ is conjugate of the partition $\lambda$. 


Using the definition of Jack characters, this equality becomes:
\begin{equation}
    \label{eq:OneOverAlphaInSigma}
\Sigma_\mu^{(\alpha)}(\lambda) = (- \alpha)^{|\mu| - \ell(\mu)}
\ \Sigma_\mu^{(\alpha^{-1})}(\lambda').
\end{equation}
Therefore the combinatorial interpretation of Stanley's and Kerov's polynomials for
zonal characters have analogues in the symplectic zonal case.
As it will be useful in the next section, let us state the one for Kerov's
polynomials.

\begin{theorem}
\label{theo:kerov-symplectic}
    Let $\mu$ be a partition of the integer $k$ and $(S_1, S_2)$ be a fixed
    couple of pair-partitions of\/ $[2k]$ of type $\mu$.
    Let $s_2,s_3,\dots$ be a sequence of non-negative
    integers with only finitely many non-zero elements.

Then the rescaled coefficient
$$  2^{|\mu|}
\left[ \left(R_2^{(1/2)}\right)^{s_2}
\left(R_3^{(1/2)}\right)^{s_3} \cdots\right]
K^{(1/2)}_\mu $$ of the (generalized) symplectic
zonal Kerov polynomial is equal to the number of pairs
$(S_0,q)$ with properties 
\ref{enum:first-condition}, \ref{enum:ilosc2}, \ref{enum:boys-and-girls},
\ref{enum:kolorowanie} and \ref{enum:marriage} of Theorem \ref{theo:kerov}.
\end{theorem}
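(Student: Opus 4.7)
The strategy is to deduce Theorem~\ref{theo:kerov-symplectic} from Theorem~\ref{theo:kerov} as a purely formal consequence of the duality \eqref{eq:OneOverAlphaInSigma} between $\alpha$ and $\alpha^{-1}$. Since the coefficients in Theorem~\ref{theo:kerov-symplectic} are expressed in terms of $R_k^{(1/2)}$, the first task is to translate \eqref{eq:OneOverAlphaInSigma} into a corresponding duality for anisotropic free cumulants. Starting from
$$R_k^{(\alpha)}(\lambda) = \lim_{s\to\infty} \frac{\alpha}{(\alpha s)^k}\,\Sigma_{k-1}^{(\alpha)}(D_s\lambda),$$
I apply \eqref{eq:OneOverAlphaInSigma} to $D_s\lambda$, using $(D_s\lambda)' = D_s(\lambda')$, and compare with the analogous limit defining $R_k^{(1/\alpha)}(\lambda')$. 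This yields $R_k^{(1/2)}(\lambda) = (-2)^{k}\, R_k^{(2)}(\lambda')$.

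The second step is coefficient comparison. Write $K_\mu^{(2)} = \sum_{\mathbf{s}} c^{(2)}_{\mathbf{s}}\, \prod_{k\geq 2} X_k^{s_k}$ in abstract variables and set $\|\mathbf{s}\| := 2s_2+3s_3+\cdots$. By the definition of Kerov polynomials, evaluated at $\lambda'$, combined with the free-cumulant duality just derived,
$$\Sigma_\mu^{(2)}(\lambda') = \sum_{\mathbf{s}} c^{(2)}_{\mathbf{s}}\, \prod_k \bigl(R_k^{(2)}(\lambda')\bigr)^{s_k} = \sum_{\mathbf{s}} c^{(2)}_{\mathbf{s}}\, (-2)^{-\|\mathbf{s}\|}\, \prod_k \bigl(R_k^{(1/2)}(\lambda)\bigr)^{s_k}.$$
Multiplying by $(-1/2)^{|\mu|-\ell(\mu)}$ turns the left side into $\Sigma_\mu^{(1/2)}(\lambda)$ via \eqref{eq:OneOverAlphaInSigma}, and since $R_2^{(1/2)},R_3^{(1/2)},\dots$ are algebraically independent one concludes
$$c^{(1/2)}_{\mathbf{s}} \;=\; (-1)^{|\mu|-\ell(\mu)+\|\mathbf{s}\|}\, 2^{-(|\mu|-\ell(\mu))-\|\mathbf{s}\|}\, c^{(2)}_{\mathbf{s}}.$$
Substituting the expression for $c^{(2)}_{\mathbf{s}}$ provided by Theorem~\ref{theo:kerov}, the signs collapse (using $(-1)^{|\mu|-\ell(\mu)} = (-1)^{|\mu|+\ell(\mu)}$) and the powers of $2$ telescope to give $2^{|\mu|} c^{(1/2)}_{\mathbf{s}} = N(\mu,\mathbf{s})$, the number of pairs counted in Theorem~\ref{theo:kerov}, which is exactly the claim.

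For $\mu$ with several parts, $K_\mu^{(\alpha)}$ encodes the cumulant $(-1)^{\ell(\mu)-1}\,\kappa^{\id}(\Sigma_{\mu_1}^{(\alpha)},\dots,\Sigma_{\mu_\ell}^{(\alpha)})$. Since cumulants are multilinear in their arguments and since \eqref{eq:OneOverAlphaInSigma} exhibits each $\Sigma_{k_i}^{(1/2)}$ as a rescaled pullback of $\Sigma_{k_i}^{(2)}$ under conjugation $\lambda\mapsto\lambda'$, the same overall scalar $(-1/2)^{|\mu|-\ell(\mu)}$ appears in the cumulant duality, and the argument above applies verbatim. I do not expect any serious obstacle: all the combinatorial content sits in Theorem~\ref{theo:kerov}, and what remains is essentially a bookkeeping of signs and powers of $2$, which cancels cleanly in the way indicated.
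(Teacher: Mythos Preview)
Your proof is correct and follows essentially the same approach as the paper's: both combine the duality \eqref{eq:OneOverAlphaInSigma}, the identity $R_k^{(1/2)}(\lambda)=(-2)^k R_k^{(2)}(\lambda')$, and Theorem~\ref{theo:kerov}, with the only cosmetic difference that you derive the free-cumulant identity from the limit definition and \eqref{eq:OneOverAlphaInSigma}, whereas the paper obtains it from the formula $R_k^{(\alpha)}(\lambda)=\alpha^{-k}R_k(\alpha\lambda)$ together with the behavior of $R_k$ under dilation and conjugation. Your explicit coefficient bookkeeping is more detailed than the paper's one-line proof, but the substance is the same.
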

\begin{proof}
This comes from Eq.~\eqref{eq:OneOverAlphaInSigma}, Theorem \ref{theo:kerov}
and the fact that:
\begin{multline*}
R_k^{(1/2)}(\lambda)=2^k R_k(1/2 \lambda) = 
2^k R_k \left[ D_{(1/2)} \big( ( 2 \lambda' )' \big) \right] \\
=R_k \big[  ( 2 \lambda' )' \big] = (-1)^k R_k( 2 \lambda' )=
(-2)^k R_k^{(2)}(\lambda').\qedhere
\end{multline*}
\end{proof}

      \subsection{Lassalle's conjectures}
In a series of two papers \cite{Lassalle2008a,Lassalle2009}
Lassalle proposed some conjectures on the expansion of Jack
characters in terms of Stanley's coordinates and free cumulants.
These conjectures suggest the existence of a combinatorial description of Jack
characters.
Our results give such a combinatorial description in the case of zonal characters.
Moreover, we can prove a few statements which are corollaries of Lassalle's conjectures.

Let us begin by recalling the latter (\cite[Conjecture 1]{Lassalle2008a} and
    \cite[Conjecture 2]{Lassalle2009}).
\begin{conjecture}\label{ConjectureLassalle}
Let $\mu$ be a partition of $k$.
    \begin{itemize}
        \item $(-1)^k \Sigma^{(\alpha)}_\mu(\pp,-\qq)$ is a polynomial in variables
            $\pp$, $\qq$ and $\alpha-1$ with
	    non-negative integer coefficients.
        \item there is a ``natural'' way to write the quantity 
 \[\kappa^{\text{id}}( \Sigma^{(\alpha)}_{k_1},\dots, \Sigma^{(\alpha)}_{k_\ell} )\]
 as a polynomial in the variables $R_i^{(\alpha)}$, $\alpha$
and $1-\alpha$ with
 non-negative integer coefficients.
    \end{itemize}
\end{conjecture}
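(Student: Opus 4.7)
The plan is to seek a combinatorial model for $\Sigma^{(\alpha)}_\mu$ that simultaneously generalizes Theorem \ref{theo:ZonalFeraySniady-B} (case $\alpha=2$), its symplectic counterpart via Theorem \ref{theo:kerov-symplectic} (case $\alpha=1/2$), and the Schur formula from Remark \ref{rem:analogue_schur} (case $\alpha=1$), in such a way that non-negativity in $\pp$, $-\qq$ and $\alpha-1$ (resp.\ in $R_i^{(\alpha)}$, $\alpha$ and $1-\alpha$) is manifest. Each of the three known formulas is a signed sum over families of ``matched structures'' (pair-partitions or permutations), and the natural unifying framework is that of bipartite maps on orientable and non-orientable surfaces, with $\alpha-1$ tracking non-orientability in the spirit of the Goulden--Jackson $b$-conjecture.

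First, I would reinterpret the triple $(S_0,S_1,S_2)$ appearing in Theorem \ref{theo:ZonalFeraySniady-B} as a (possibly non-orientable) bipartite map whose black and white vertex rotations are encoded by $S_1$ and $S_2$ and whose edge-involution is $S_0$; the Schur-case triple of permutations from Remark \ref{rem:analogue_schur} then corresponds to its orientable sub-family. This suggests the ansatz
\begin{equation*}
\Sigma^{(\alpha)}_\mu(\pp\times\qq) \;=\; \frac{(-1)^k}{2^{\ell(\mu)}} \sum_{M} (\alpha-1)^{\eta(M)}\, w(M;\pp,-\qq),
\end{equation*}
where $M$ ranges over bipartite maps of ``type'' $\mu$, $\eta(M)\in\N$ is a yet-to-be-identified non-orientability statistic, and $w(M;\pp,-\qq)$ is a monomial extending the weight used on the right-hand side of Theorem \ref{theo:ZonalFeraySniady-B}. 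A parallel ansatz would be written for the generalized Kerov polynomial, with $R_i^{(\alpha)}$ replacing $(\pp,\qq)$ and with condition \ref{enum:marriage} selecting the contributing maps.

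Second, to prove the ansatz coincides with the true Jack character I would appeal to an axiomatic characterization, either via the eigenfunction property of $J^{(\alpha)}_\lambda$ under Sekiguchi's operator or via Lassalle's explicit recurrences for $(\alpha)$-shifted symmetric functions, reducing the task to checking that the proposed sum-over-maps satisfies the same recursion. The initial data for such an induction would be the cases $\alpha\in\{1,2\}$ proved in this paper, together with the duality \eqref{eq:OneOverAlphaInSigma} which pins down $\alpha=1/2$ and, combined with polynomial dependence on $\alpha$, constrains the generic value. Once the Stanley statement is settled, the Kerov statement follows by the leading-term extraction along dilations $D_s\lambda$ exactly as outlined for the zonal case, the only new ingredient being the verification that $(\alpha-1)^{\eta(M)}$ survives the passage to dominant order unchanged, yielding positivity in $\alpha$ and $1-\alpha$.

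The main obstacle, and the reason the conjecture has resisted proof, is the identification of the statistic $\eta(M)$. Three special values of $\alpha$ cannot determine a polynomial of arbitrary degree, so a pure interpolation argument from $\alpha\in\{1,1/2,2\}$ is insufficient; and at generic $\alpha$ there is no representation-theoretic shortcut (no Gelfand pair, no Young symmetrizer) from which $\eta(M)$ could be read off the way the sign $(-1)^{\loops(S,S_1)}$ is read off the geometry of the tableau $T$ when $\alpha=2$. A successful proof will need an intrinsic combinatorial-algebraic rule for $\eta(M)$ compatible with both Jack's differential operator and the integrality demanded by Conjecture \ref{ConjectureLassalle}; producing such a rule is, in effect, the full content of the open problem.
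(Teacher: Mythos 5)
The statement you are addressing is Conjecture \ref{ConjectureLassalle}, and the paper does not prove it: the authors state explicitly that, having formulas only for $\alpha=2$ and $\alpha=1/2$, they cannot prove the conjecture, and they content themselves with proving special-case consequences (Propositions \ref{PropStanleyLassalle2}, \ref{PropKerovLassalle2} and \ref{PropKerovLassalleHalf}) via the zonal combinatorial formulas and the duality \eqref{eq:OneOverAlphaInSigma}. Your text likewise does not constitute a proof, and you essentially concede this in your last paragraph: the entire argument hinges on an ansatz with an unspecified non-orientability statistic $\eta(M)$, and you acknowledge that identifying $\eta(M)$ and showing the weighted sum over maps actually equals $\Sigma^{(\alpha)}_\mu$ ``is, in effect, the full content of the open problem.'' An ansatz plus the observation that it would suffice is not a proof; nothing beyond the cases $\alpha\in\{1,\tfrac12,2\}$ already treated in the paper is established. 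The verification step you gesture at (checking the Sekiguchi eigenfunction property or Lassalle's recurrences for the map sum) is exactly where all the difficulty lies and is not carried out, and, as you note yourself, interpolation from three values of $\alpha$ cannot pin down a polynomial of unbounded degree, so the ``initial data'' from this paper do not constrain the generic case.

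A second, more local gap: even granting a positive Stanley-type formula with weight $(\alpha-1)^{\eta(M)}$, the Kerov-polynomial half of the conjecture would not follow ``by the leading-term extraction along dilations $D_s\lambda$ exactly as outlined for the zonal case.'' In the paper, passing from the $N$-function expansion to coefficients of free cumulants is not a leading-order extraction; it requires the full strength of Lemma \ref{lem:extract-kerov} (the machinery of \cite{DolegaF'eray'Sniady2008}, including the connectivity hypothesis and the marriage condition \ref{enum:marriage_Graphs}), and the change of variables between $R_i$ and $R_i^{(\alpha)}$ introduces powers of $\alpha$ that must be matched against the prefactors before any sign or integrality statement can be read off. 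Likewise, integrality in the zonal case did not come for free from the combinatorial formula: it needed the orbit argument under the group $G$ generated by the axial symmetries (Theorem \ref{ThmOrientationsOrbits} and Lemma \ref{LemInteger}). Your outline addresses neither of these points, so even conditionally on the ansatz the second bullet of the conjecture would remain unproved.
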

In fact, Lassalle conjectured this in the case where $\mu$ has no part equal to
$1$, but it is quite easy to see that if it is true for some partition $\mu$, it
is also true for $\mu \cup 1$.

Having formulas only in the cases $\alpha=1/2$ and $\alpha=2$, we can not
prove this conjecture. In the following we will present a
few corollaries of Conjecture \ref{ConjectureLassalle} in the special
cases $\alpha=2$ and $\alpha=1/2$ and we shall prove them.
This gives an indirect evidence supporting Conjecture \ref{ConjectureLassalle}.

\begin{proposition}\label{PropStanleyLassalle2}
    Let $\mu$ be a partition of $k$. Then $(-1)^k \Sigma^{(2)}_\mu(\pp,-\qq)$
    is a polynomial in variables $\pp$, $\qq$ with non-negative integer coefficients.
\end{proposition}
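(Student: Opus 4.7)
The plan is to start from Theorem~\ref{theo:ZonalFeraySniady-B} and perform the substitution $\qq \mapsto -\qq$. Each of the $|\loops(S_0,S_1)|$ factors $-2q_{\psi(l')}$ becomes $+2q_{\psi(l')}$, so the sign $(-1)^{|\loops(S_0,S_1)|}$ disappears; together with the overall $(-1)^k$ of the theorem cancelling the $(-1)^k$ of the proposition, one obtains
\begin{equation*}
(-1)^k\,\Sigma^{(2)}_\mu(\pp, -\qq)
= \frac{1}{2^{\ell(\mu)}}\sum_{S_0} 2^{|\loops(S_0,S_1)|}\sum_{\varphi}\prod_{l\in \loops(S_0,S_2)} p_{\varphi(l)} \prod_{l'\in \loops(S_0,S_1)} q_{\psi(l')}.
\end{equation*}
Every monomial on the right appears with a manifestly non-negative rational coefficient, so non-negativity is immediate. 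Only integrality has to be proved.

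For integrality I plan to reorganize the sum so that the factor $1/2^{\ell(\mu)}$ is absorbed into a count of orbits. For each of the $\ell(\mu)$ loops $L$ of the reference graph $\loops(S_1,S_2)$ I would introduce a \emph{local flip} involution on pair-partitions $S_0$ which modifies $S_0$ only among the labels belonging to $L$ and leaves the monomial produced by $(S_0,\varphi)$ unchanged. These $\ell(\mu)$ commuting involutions generate a $(\Z/2\Z)^{\ell(\mu)}$-action on the set of pairs $(S_0,\varphi)$ mapping to any fixed monomial. The key identity to verify is that $|\loops(S_0,S_1)|$ is constant on each orbit $O$ and that $|O|\cdot 2^{|\loops(S_0,S_1)|} = 2^{\ell(\mu)}$; granted this, each orbit contributes exactly $1$ to the coefficient of the monomial, so the coefficient equals the number of orbits, a non-negative integer. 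A sanity check in the case $\mu=(1,1)$ with $S_1=S_2=\{\{1,2\},\{3,4\}\}$, where the three pair-partitions of $[4]$ split into one orbit of size $1$ (with $|\loops(S_0,S_1)|=2$) and one orbit of size $2$ (with $|\loops(S_0,S_1)|=1$), confirms the identity $|O|\cdot 2^{|\loops(S_0,S_1)|}=4=2^{\ell(\mu)}$.

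The main obstacle will be to define the local flip precisely and to verify rigorously that it (i) takes a pair-partition to a pair-partition, (ii) preserves the graph $\loops(S_0,S_2)$ together with the function $\varphi$ and hence the resulting monomial, and (iii) effects the correct shift in $|\loops(S_0,S_1)|$ demanded by the orbit identity. Should this orbit-theoretic construction become too intricate, a viable fallback is to combine the zonal Kerov positivity of Theorem~\ref{theo:kerov} with explicit Stanley-coordinate expressions of the free cumulants $R_k^{(2)}(\pp\times\qq)$ and their sign behaviour under $\qq \mapsto -\qq$, deducing Stanley positivity from Kerov positivity.
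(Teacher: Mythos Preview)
Your overall strategy --- use Theorem~\ref{theo:ZonalFeraySniady-B} for non-negativity, then absorb the denominator $2^{\ell(\mu)}$ by a $(\Z/2\Z)^{\ell(\mu)}$-action generated by one involution per loop of $\loops(S_1,S_2)$ --- is exactly the paper's approach in Section~\ref{SubsectStanleyLassalle2}. The ``local flips'' you propose are precisely the axial symmetries $r_i$ defined there, and the constancy of $|\loops(S_0,S_1)|$ along orbits is correct.

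However, the key identity you want, $|O|\cdot 2^{|\loops(S_0,S_1)|} = 2^{\ell(\mu)}$, is \emph{false} in general. Take $\mu=(2)$, $S_1=\{\{1,2\},\{3,4\}\}$, $S_2=\{\{1,4\},\{2,3\}\}$. Here $\ell(\mu)=1$ and the group $G$ has order $2$, generated by $r_1=(1\ 2)(3\ 4)$. For $S_0=S_1$ one has $|\loops(S_0,S_1)|=2$, yet $r_1$ fixes $S_0$, so the orbit has size $1$ and $|O|\cdot 2^{|\loops(S_0,S_1)|}=4\neq 2$. Correspondingly the coefficient of $p_1q_1^2$ in $(-1)^2\Sigma^{(2)}_{(2)}(\pp,-\qq)$ equals $2$, not $1$, so it cannot be a mere orbit count of the set you describe. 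Your sanity check $\mu=(1,1)$ is too degenerate: there every polygon has length $2$, forcing $|\loops(S_0,S_1)|\leq \ell(\mu)$, which is what makes the identity accidentally hold.

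The paper repairs this in two ways. The quick fix (Lemma~\ref{LemInteger}) is to prove only the weaker statement that $\dfrac{2^{|\loops(S_0,S_1)|}}{2^{\ell(\mu)}}\,|O|$ is an \emph{integer}: one shows $|\Stab(S_0)|$ is a power of $2$ dividing $2^{|\loops(S_0,S_1)|}$, since an element of $G$ fixing $S_0$ and $S_1$ is determined by its effect on one element per loop of $\loops(S_0,S_1)$. The cleaner fix (Theorem~\ref{ThmOrientationsOrbits}) is to act not on $S_0$ alone but on pairs $(S_0,\phi)$ where $\phi$ is a two-colouring of $[2k]$ compatible with $\loops(S_0,S_1)$; there are $2^{|\loops(S_0,S_1)|}$ such colourings, which exactly absorbs the factor you are trying to cancel, and now the $G$-action is genuinely free so every orbit has size $2^{\ell(\mu)}$. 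Your pair $(S_0,\varphi)$ does not achieve this, because $\varphi:\loops(S_0,S_2)\to\N^\star$ carries no information about $|\loops(S_0,S_1)|$.

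Your fallback via Theorem~\ref{theo:kerov} is not circular, but to carry it out you would need a Stanley-positivity statement for each $R_k^{(2)}$ under $\qq\mapsto -\qq$ and then control products; this is substantially more work than the direct argument above.
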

If we look at the expansion of symplectic zonal polynomials in Stanley's
coordinates, Lassalle's conjecture does not imply neither integrity nor
positivity of the coefficients as we specialize the variable $\alpha-1$ to a
non-integer negative value.

\begin{proposition}\label{PropKerovLassalle2}
    Let $\mu$ be a partition of $k$. Then $K_\mu^{(2)}$ has
integer coefficients.
\end{proposition}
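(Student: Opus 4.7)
The plan is to deduce the proposition as a 2-adic divisibility statement extracted from Theorem~\ref{theo:kerov}. Setting $d := 2s_2+3s_3+\cdots$, that theorem identifies the coefficient of $\prod_i (R_i^{(2)})^{s_i}$ in $K_\mu^{(2)}$ with
\[
(-1)^{|\mu|+\ell(\mu)+d}\;2^{d-\ell(\mu)}\;N_{\mathbf s},
\]
where $N_{\mathbf s}\in\mathbb{Z}_{\geq 0}$ is the number of pairs $(S_0,q)$ satisfying conditions \ref{enum:first-condition}--\ref{enum:marriage}. When $d\geq\ell(\mu)$ the factor $2^{d-\ell(\mu)}$ is an honest non-negative power of $2$ and integrality is immediate. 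The entire content of the proposition therefore reduces to the arithmetic claim
\[
2^{\ell(\mu)-d}\,\Big|\,N_{\mathbf s}\qquad\text{whenever } d<\ell(\mu).
\]

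My first line of attack would be to exhibit a free action of $(\mathbb{Z}/2\mathbb{Z})^{\ell(\mu)-d}$ on the set of valid pairs $(S_0,q)$. Combining conditions \ref{enum:ilosc2} and \ref{enum:boys-and-girls} we obtain
\[
\ell(\mu)-d \;=\; |\loops(S_1,S_2)| - |\loops(S_0,S_1)| - |\loops(S_0,S_2)|,
\]
which I would like to read as a topological ``surplus'' of loops of $\loops(S_1,S_2)$ that are not pinned down by the loop structures of $\loops(S_0,S_1)$ and $\loops(S_0,S_2)$. The hope is that each such excess loop carries a $\mathbb{Z}/2\mathbb{Z}$-symmetry acting on $S_0$ by a local re-pairing — swapping two consecutive $S_0$-edges situated along that loop — in a way that leaves the loop counts of $\loops(S_0,S_1)$ and $\loops(S_0,S_2)$ (and hence the function $q$) unchanged. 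Such a local swap would automatically preserve \ref{enum:ilosc2}--\ref{enum:kolorowanie} and, by its localized nature, the marriage-type condition \ref{enum:marriage} as well.

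The hard part will be making this heuristic precise: identifying a canonical family of exactly $\ell(\mu)-d$ commuting involutions, checking that transitivity \ref{enum:first-condition} is preserved by each of them (the most delicate of the five conditions), and — crucially — proving freeness, i.e.\ that no nontrivial combination of such swaps can leave $S_0$ fixed. Freeness I would expect to reduce to a rigidity statement for the hypermap on $[2k]$ encoded by $(\sigma_0,\sigma_1,\sigma_2)$, provable via an Euler-characteristic argument on the bipartite maps $\loops(S_0,S_j)$. As a backup strategy should the explicit action prove elusive, I would run induction on $|\mu|$, using a reduction (such as collapsing one length-$2$ loop of $\loops(S_1,S_2)$) to fall back into the already settled regime $d\geq\ell(\mu)$.
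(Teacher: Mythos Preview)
Your reduction to the 2-adic divisibility $2^{\ell(\mu)-d}\mid N_{\mathbf s}$ is exactly right, but the plan you outline for proving it is not a proof yet, and the specific mechanism you suggest --- ``swapping two consecutive $S_0$-edges situated along a loop in a way that leaves the loop counts of $\loops(S_0,S_1)$ and $\loops(S_0,S_2)$ unchanged'' --- runs into a concrete obstacle. Lemma~\ref{LemChangingSignInvolution} shows that replacing $S_0$ by $(i\ j)\cdot S_0$ changes $|\loops(S_0,S_1)|$ by exactly $\pm 1$; a single local swap therefore never preserves both loop counts, and it is not at all clear how to assemble $\ell(\mu)-d$ commuting, loop-count-preserving, transitivity-preserving involutions out of such moves. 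Your fallback induction is also only a gesture at this point.

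The paper avoids this difficulty by acting with a group that is given explicitly from the start rather than assembled ad hoc, and by enlarging the set on which it acts. Fix one axial reflection $r_i$ of each polygon of $\loops(S_1,S_2)$; these commute and generate $G\cong(\mathbb{Z}/2\mathbb{Z})^{\ell(\mu)}$, and each $r_i$ fixes both $S_1$ and $S_2$. Hence $S_0\mapsto g\cdot S_0$ preserves the isomorphism types of $\loops(S_0,S_1)$ and $\loops(S_0,S_2)$ and therefore all five conditions of Theorem~\ref{theo:kerov}. This action on $\{S_0\}$ is not free in general, so one passes to the set $\pairings^o$ of pairs $(S_0,\phi)$ with $\phi$ an orientation compatible with $\loops(S_0,S_1)$; there are exactly $2^{e}$ such $\phi$ for each $S_0$, where $e=s_2+s_3+\cdots=|\loops(S_0,S_1)|$. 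On $\pairings^o$ the diagonal $G$-action \emph{is} free (this is the short lemma in Section~\ref{SubsectStanleyLassalle2}), so $2^{\ell(\mu)}\mid 2^{e}N_{\mathbf s}$ and the coefficient $2^{d-\ell(\mu)}N_{\mathbf s}=2^{d-e}\cdot\dfrac{2^{e}N_{\mathbf s}}{2^{\ell(\mu)}}$ is an integer because $d-e=s_2+2s_3+\cdots\geq 0$. Note this actually yields the stronger fact that the coefficient is divisible by $2^{d-e}$; your target divisibility $2^{\ell(\mu)-d}\mid N_{\mathbf s}$ is weaker than the $2^{\ell(\mu)-e}\mid N_{\mathbf s}$ the paper obtains. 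The moral: rather than seeking a subtle $(\mathbb{Z}/2)^{\ell(\mu)-d}$-action on the bare set of $(S_0,q)$, it is far easier to let the full, obvious symmetry group of the polygon family act on a slightly inflated set.
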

In this case there is no positivity result, because one of the variables of the
polynomial, namely $1-\alpha$, is specialized to a negative value.

\begin{proposition}\label{PropKerovLassalleHalf}                    
    Let $\mu$ be a partition of $k$. Then $K_\mu^{(1/2)}$ has
non-negative rational coefficients.
\end{proposition}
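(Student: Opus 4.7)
The plan is to deduce Proposition \ref{PropKerovLassalleHalf} as an immediate consequence of Theorem \ref{theo:kerov-symplectic}, which has already been established. Fix a sequence $(s_2, s_3, \ldots)$ of non-negative integers with only finitely many non-zero entries, and consider the coefficient of the monomial $\left(R_2^{(1/2)}\right)^{s_2} \left(R_3^{(1/2)}\right)^{s_3} \cdots$ in $K^{(1/2)}_\mu$. Theorem \ref{theo:kerov-symplectic} asserts that $2^{|\mu|}$ times this coefficient is equal to the cardinality of the set of pairs $(S_0, q)$ satisfying conditions \ref{enum:first-condition}--\ref{enum:marriage} of Theorem \ref{theo:kerov}. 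A cardinality is a non-negative integer, so after dividing by $2^{|\mu|}$ the coefficient itself is a non-negative rational number. As this argument applies to every monomial in the $R_i^{(1/2)}$, every coefficient of $K^{(1/2)}_\mu$ is a non-negative rational, which is precisely the content of the proposition.

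Since Theorem \ref{theo:kerov-symplectic} already packages everything into a combinatorial count, there is genuinely no hard step to overcome — the substantive work has been carried out earlier, first in Theorem \ref{theo:kerov} and then in the transfer via \eqref{eq:OneOverAlphaInSigma} and the relation $R_k^{(1/2)}(\lambda) = (-2)^k R_k^{(2)}(\lambda')$. The contrast with Proposition \ref{PropKerovLassalle2} is nonetheless illuminating and explains why positivity is available here but not for $\alpha = 2$: in Theorem \ref{theo:kerov}, the combinatorial count equals the coefficient of $K^{(2)}_\mu$ only up to the sign $(-1)^{|\mu|+\ell(\mu)+2s_2+3s_3+\cdots}$, which obstructs positivity in the zonal case. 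The factors $(-2)^k$ in the duality formula precisely absorb these signs and rescalings when one passes to $\alpha = 1/2$, collapsing the normalization down to the clean factor $2^{|\mu|}$ that appears in Theorem \ref{theo:kerov-symplectic}. It is this sign cancellation, rather than any new argument, that makes Proposition \ref{PropKerovLassalleHalf} fall out in one line.
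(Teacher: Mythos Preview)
Your proof is correct and follows exactly the same approach as the paper, which simply states that the proposition is a direct consequence of Theorem \ref{theo:kerov-symplectic}. Your additional commentary on why the signs cancel in the $\alpha=1/2$ case but not in the $\alpha=2$ case is accurate and helpful, though not strictly necessary for the proof.
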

\begin{proof}
It is a direct consequence of Theorem \ref{theo:kerov-symplectic}.
\end{proof}
In this case there is no integrality result, because the
variables $\alpha$ and $1-\alpha$ are specialized to non-integer values.

Propositions \ref{PropStanleyLassalle2} and \ref{PropKerovLassalle2} 
 are proved in Sections
\ref{SubsectStanleyLassalle2} and \ref{SubsectKerovLassalle}.

%
%
      \subsection{Pair-partitions and zonal characters: the dual picture}
It should be stressed that there was another result linking triplets of
pair-partitions and zonal characters; it can be found in the work of Goulden
and Jackson \cite{Goulden1996a}. But their result goes in the reverse direction
than ours: they count triplets of pair partitions with some properties using
zonal characters, while we express zonal characters using triplets of
pair-partitions. An analogous picture exists for pairs of permutations and the
usual characters of symmetric groups. It would be nice to understand the link
between these two dual approaches.

\subsection{Maps on possibly non-orientable surfaces}
Most of our theorems involve triplets of pair-partitions.
This combinatorial structure is in fact much more natural
than it might seem at first glance,
as they are in correspondence with graphs drawn on
(possibly non orientable and non connected) surfaces.
In section \ref{SectCombinatorics}, we explain this relation
and give combinatorial reformulations of our main results.

    \subsection{Overview of the paper}
Sections \ref{sec:zonal-polynomials}, \ref{sec:Stanley} and \ref{sec:kerov}
are respectively devoted to the proofs of the main results 1, 2 and 3.
Section \ref{SectCombinatorics} is devoted to the link with maps.

\section{Formulas for zonal polynomials}\label{sec:zonal-polynomials}

The main result of this section is Theorem \ref{theo:zonal-polynomials},
which gives a combinatorial formula for zonal polynomials. 

   \subsection{Preliminaries}
\label{subsec:definition}
In this paragraph we give the characterization of zonal polynomials, which is
the starting point of our proof of Theorem \ref{theo:zonal-polynomials}.
This characterization is due to James \cite{James1961}.
However, we will rather base our presentation on the section VII.3 of
Macdonald's book \cite{Macdonald1995}, because the link with more usual definitions
of zonal polynomials (as particular case of Jack symmetric functions, Eq.~(VII,
2.23) or {\it via zonal spherical functions} (VII, 2.13) is explicit there.

Consider the space $P(G)$ of polynomial functions on the set $G=\Gl_d(\R)$, 
{\it i.e.~}functions which are polynomial in the entries of the matrices. 
The group $G$ acts canonically on this space as follows: for $L,X\in G$ and $f
\in P(G)$, we define
$$ (L f)(X) = f(L^T X). $$
As a representation of $G$, the space $P(G)$ decomposes as $P(G) =
\bigoplus_\mu P_\mu$, where the sum runs over partitions of length at most
$d$ and where $P_\mu$ is a sum of representations of type $\mu$ \cite[Eq.~(VII,
3.2)]{Macdonald1995}.

Let us denote $K=O(d)$. We will look particularly at the subspace $P(G,K)$ of
functions $f\in P(G)$ which are left- and right-invariant under the action of
the orthogonal group, that is
such that, for any $k,k' \in K$ and $g \in G$,
\[ f(kgk')=f(g). \]
The intersection $P_\mu \cap P(G,K)$ has dimension $1$ if $\mu=2\lambda$ for some
partition $\lambda$ and $0$ otherwise \cite[Eq.~(VII,
3.15)]{Macdonald1995}.
Thus there is a unique function $\Omega^{(d)}_\lambda$ such that:
\begin{enumerate}[label=(\alph*)]
 \item 
$\Omega^{(d)}_\lambda(1_G) = 1$,
 \item
$\Omega^{(d)}_\lambda$ is invariant under the left action of
the orthogonal group $O_d(\R)$,
 \item
$\Omega^{(d)}_\lambda$ belongs to $P_{2\lambda}$.
\end{enumerate}
This function $\Omega^{(d)}_\lambda$ is linked to zonal polynomials by the
following
equation \cite[Eq.~(3.24)]{Macdonald1995}:
\[\Omega^{(d)}_\lambda(X) = \frac{Z_\lambda ( \Sp(X X^T) )}{Z_\lambda(1^d)},\]
where $\Sp(X X^T)$ is the multiset of eigenvalues of $X X^T$.
Therefore if we find functions $\Omega^{(d)}_\lambda$ with the properties above,
we will be able to compute zonal polynomials up to a multiplicative constant.

We will look for such functions in a specific form.
For $Z=v_1\otimes \cdots\otimes v_{2n}\in (\R^d)^{\otimes 2n}$ we define a
homogeneous polynomial function of degree $2n$ on $G$
$$ \phi_Z(X) = \langle X^T v_1, X^T v_2 \rangle \cdots \langle X^T v_{2n-1}, X^T
v_{2n} \rangle \quad \text{for }X\in \M_d(\R) $$
and for general tensors $Z\in (\R^d)^{\otimes 2n}$ by linearity. Clearly, 
$$ \phi_Z( X O ) = \phi_Z( X) \qquad \text{for any } O \in O_d(\R);$$
in other words $\phi_Z$ is invariant under the right action of the orthogonal
group $O_d(\R)$.

Besides, $\Gl_d(\R)$ acts on $(\R^d)^{\otimes 2n}$:
this action is defined on elementary tensors by 
\begin{equation}
\label{eq:action}
 L(v_1\otimes \cdots\otimes v_{2n}) = L v_1 \otimes \cdots
\otimes 
 L v_{2n}.  
\end{equation}
%
\begin{lemma}
    The linear map $\phi:(\R^d)^{\otimes 2n} \to P(G)$ is an
    intertwiner of $G$-representation, {\it i.e.~}for all $g \in G$ and 
    $Z \in (\R^d)^{\otimes 2n} $ one has:
    \[ g \phi_Z = \phi_{g Z}. \vspace{-.7cm}\]   
    \label{LemIntertwiner}
\end{lemma}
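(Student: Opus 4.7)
The plan is to verify the identity on elementary tensors and extend by linearity, reducing everything to a one-line computation using the fact that the action on $P(G)$ is defined via $L^T$ (not $L$) while the action on $(\R^d)^{\otimes 2n}$ is defined via $L$, so that the two transposes cancel.

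First, since both sides of $g\phi_Z = \phi_{gZ}$ depend linearly on $Z$ (the right-hand side because $\phi$ is defined as a linear extension, the left-hand side because $\phi_Z$ depends linearly on $Z$ and the $G$-action on $P(G)$ is linear in the function), it suffices to treat the case of an elementary tensor $Z = v_1 \otimes \cdots \otimes v_{2n}$.

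Next, I would unfold the definitions. For $X \in G$ we have
\begin{equation*}
(g\phi_Z)(X) = \phi_Z(g^T X) = \prod_{i=1}^{n} \langle (g^T X)^T v_{2i-1}, (g^T X)^T v_{2i} \rangle = \prod_{i=1}^{n} \langle X^T g v_{2i-1}, X^T g v_{2i}\rangle,
\end{equation*}
where I used $(g^T X)^T = X^T g$. On the other hand, by the definition \eqref{eq:action} of the $G$-action on tensors, $gZ = gv_1 \otimes \cdots \otimes gv_{2n}$, so
\begin{equation*}
\phi_{gZ}(X) = \prod_{i=1}^{n} \langle X^T (gv_{2i-1}), X^T (gv_{2i})\rangle,
\end{equation*}
which matches the expression above. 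This proves the identity on elementary tensors, and linearity gives the result in general.

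There is no real obstacle here: the statement is essentially a tautology once one checks that the two conventions (left-multiplication by $L^T$ on functions, left-multiplication by $L$ on vectors) are compatible. The only point worth flagging is that the definition of $\phi_Z$ is engineered precisely so that the $X^T$ appearing inside each inner product absorbs the transpose coming from the $G$-action on $P(G)$, which is why $\phi$ becomes an intertwiner rather than a twisted intertwiner.
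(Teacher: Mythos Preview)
Your proof is correct and is precisely the unpacking of what the paper means by ``Straightforward from the definition of the actions'': you verify the identity on elementary tensors using $(g^T X)^T = X^T g$ and extend by linearity. There is nothing to add.
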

\begin{proof}
    Straightforward from the definition of the actions.
\end{proof}
Thanks to this lemma, $\phi_{z^{(d)}_\lambda}$ will be left-invariant by multiplication
by the orthogonal group if and only if $z^{(d)}_\lambda$ is invariant by the action
of the orthogonal group. 
Besides, $\phi_{z^{(d)}_\lambda}$ is in $P_\mu$ if $z^{(d)}_\lambda$ itself in the
isotypic component of type $\mu$ in the representation $(\R^d)^{\otimes 2n}$.

Finally, we are looking for an element $z^{(d)}_\lambda \in (\R^d)^{\otimes 2n}$
such that:
\begin{enumerate}[label=(\alph*)]
 \item \label{item:non-zero}
     $\phi_{z^{(d)}_\lambda}$ is non-zero,
 \item \label{item:left-invariance}
$z^{(d)}_\lambda$ is invariant under the left action of $O_d(\R)\subset\Gl_d(\R)$,
 \item \label{item:representation}
     $z^{(d)}_\lambda$ belongs to the isotypic component of type $2\lambda$ in the
     representation $(\R^d)^{\otimes 2n}$ (in particular $n$ has to be the size of $\lambda$).
\end{enumerate}

In the following paragraphs we exhibit an element
$z^{(d)}_\lambda\in (\R^d)^{\otimes 2n}$ with these properties
and use it to compute the zonal polynomial $Z_\lambda$. 

\subsection{A few lemmas on pair-partitions}
\begin{lemma}
    Let $(S_1, S_2)$ be a couple of pair-partitions of $[2n]$ of type $\mu$.
    Then if we see $S_1$ and $S_2$ as involutions of $[2n]$,
    their composition $S_1 \circ S_2$ has cycle-type $\mu \cup \mu$.
\end{lemma}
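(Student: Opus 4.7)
The plan is to trace through each connected component (loop) of the graph $\loops(S_1,S_2)$ and compute directly how the involution $S_1\circ S_2$ acts on the edge-labels appearing in that loop.

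Fix a loop of length $2\ell$ in $\loops(S_1,S_2)$. Since every vertex of the graph has degree $2$ and the bipartition forces edges of $S_1$ and $S_2$ to alternate around the loop, we can cyclically list the $2\ell$ edge-labels appearing in it as $i_1,i_2,\ldots,i_{2\ell}$ so that $\{i_{2k-1},i_{2k}\}\in S_1$ and $\{i_{2k},i_{2k+1}\}\in S_2$ for every $k$, with all indices taken modulo $2\ell$. Equivalently, $S_1(i_{2k-1})=i_{2k}$ and $S_2(i_{2k})=i_{2k+1}$.

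Now I would simply compute: for an even-indexed element,
\[(S_1\circ S_2)(i_{2k}) = S_1(i_{2k+1}) = i_{2k+2},\]
so the $\ell$ even-indexed labels $i_2,i_4,\ldots,i_{2\ell}$ form a single cycle of length $\ell$ of $S_1\circ S_2$. Symmetrically, for an odd-indexed element,
\[(S_1\circ S_2)(i_{2k+1}) = S_1(i_{2k}) = i_{2k-1},\]
so the $\ell$ odd-indexed labels form another cycle of length $\ell$ (traversed in the opposite direction). Hence each loop of length $2\ell$ in $\loops(S_1,S_2)$ contributes exactly two disjoint $\ell$-cycles to $S_1\circ S_2$.

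Summing over all loops partitions $[2n]$ into the orbits of $S_1\circ S_2$: a loop of type $\ell_i$ produces two cycles of length $\ell_i$, so the cycle-type of $S_1\circ S_2$ is $(\ell_1,\ell_1,\ell_2,\ell_2,\ldots)=\mu\cup\mu$. There is no real obstacle here beyond bookkeeping; the only care needed is to parametrize the loop consistently (so that $S_1$ joins pairs starting at an odd index while $S_2$ joins pairs starting at an even index) and to handle the cyclic indices modulo $2\ell$, which automatically produces the two orbits of equal length.
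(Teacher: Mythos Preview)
Your proof is correct and takes essentially the same approach as the paper's own proof: parametrize each loop of $\loops(S_1,S_2)$ by a cyclic sequence $i_1,\dots,i_{2\ell}$, record which consecutive pairs belong to $S_1$ and which to $S_2$, and then compute $S_1\circ S_2$ directly to see it splits the loop into two $\ell$-cycles on the odd- and even-indexed labels. The only difference from the paper is a harmless shift in the indexing convention (you put $S_1$-pairs at $\{i_{2k-1},i_{2k}\}$ whereas the paper puts them at $\{i_{2j},i_{2j+1}\}$).
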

\begin{proof}
    Let $(i_1,i_2,\ldots,i_{2\ell})$ be a loop of length $2\ell$ in the graph
    $\loops(S_1,S_2)$. This means that, up to a relabeling, $S_1$ (resp.~$S_2$)
    contains the pairs
    $\{i_{2j}, i_{2j+1}\}$ (resp.~$\{i_{2j-1},i_{2j}$) for $1 \leq j \leq \ell$
    (with the convention $i_{2\ell+1}=i_1$). Then the restriction of 
    $S_1 \circ S_2$ to $\{i_1,\cdots,i_{2\ell}\}$
    \[(S_1 \circ S_2) \big|_{ \{i_1,\cdots,i_{2\ell}\} }
    = (i_1\ i_3\ \cdots\ i_{2\ell} -1)(i_2\ i_4\ \cdots\ i_{2\ell}) \]
    is a disjoint product of two cycles of length $\ell$. 
    The same is true for the restriction to the support of each loop,
    therefore $S_1 \circ S_2$ has cycle-type
    $\mu_1,\mu_1,\mu_2,\mu_2,\ldots$
\end{proof}

The symmetric group $\Sym{2n}$ acts on the set of pair-partitions of $[2n]$:
if $\sigma$ is a permutation in $\Sym{2n}$ and $T$ a pair-partition of $[2n]$,
we denote by $\sigma \cdot T$ the pair partition such that
$\{\sigma(i),\sigma(j)\}$ is a part of  $\sigma \cdot T$
if and only if $\{i,j\}$ is a part of $T$.

\begin{lemma}
\label{lem:sign}
Let $\sigma$ be a permutation of the boxes of\/ $2\lambda$
which preserves each column. Then
$$ (-1)^{\sigma} = (-1)^{\loops(\sigma\cdot S,  S)}. $$
\end{lemma}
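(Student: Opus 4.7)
The plan is to use the column-preserving structure of $\sigma$ to split the graph $\loops(\sigma\cdot S, S)$ into independent pieces, one per pair of adjacent columns of $2\lambda$, and then to compare the sign of $\sigma$ with the loop count locally on each piece.

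First I would record a key compatibility between $S$ and the column structure of $2\lambda$. Since the boxes of $T$ are numbered consecutively along rows of even length $2\lambda_i$, each pair $\{2i-1,2i\}\in S$ consists of two horizontally adjacent boxes in the same row, lying in columns $C_{2j-1}$ and $C_{2j}$ for some $j$. These two columns both have height $h_j=\lambda'_j$, and the restriction of $S$ to them matches row $r$ of $C_{2j-1}$ with row $r$ of $C_{2j}$. Because $\sigma$ preserves each column, the pair-partition $\sigma\cdot S$ also consists only of pairs contained in some $C_{2j-1}\cup C_{2j}$. Thus the bipartite graph $\loops(\sigma\cdot S, S)$ decomposes as a disjoint union of subgraphs $G_j$, indexed by $j$, where $G_j$ uses exactly the labels in $C_{2j-1}\cup C_{2j}$.

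Next I would identify the restrictions $\sigma|_{C_{2j-1}}$ and $\sigma|_{C_{2j}}$ with permutations $\alpha_j,\beta_j\in\Sym{h_j}$ via the row-index parametrization, and show that the loops of $G_j$ are in bijection with the cycles of $\pi_j:=\beta_j^{-1}\alpha_j$. The verification is a two-step walk on $G_j$: from the black vertex $B_k$ (which, in row-notation, corresponds to the pair $\{\alpha_j(k),\beta_j(k)\}$), the edge labeled by row $\alpha_j(k)$ of $C_{2j-1}$ reaches the white vertex $W_{\alpha_j(k)}$, and the other edge at $W_{\alpha_j(k)}$, labeled by row $\alpha_j(k)$ of $C_{2j}$, goes to the black vertex $B_{\beta_j^{-1}(\alpha_j(k))}$. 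Hence a loop of length $2\ell$ in $G_j$ corresponds to an $\ell$-cycle of $\pi_j$, and the number of loops of $G_j$ equals $c(\pi_j)$, the number of cycles of $\pi_j$.

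Finally, the sign of the restriction of $\sigma$ to $C_{2j-1}\cup C_{2j}$ equals $\sgn(\alpha_j)\sgn(\beta_j)=\sgn(\pi_j)=(-1)^{h_j-c(\pi_j)}$. Multiplying over $j$ and using $\sum_j h_j=|\lambda|=n$ together with $\sum_j c(\pi_j)=|\loops(\sigma\cdot S, S)|$ yields $\sgn(\sigma)=(-1)^{n-|\loops(\sigma\cdot S, S)|}$, which is precisely the definition of $(-1)^{\loops(\sigma\cdot S, S)}$. The main nuisance is the index bookkeeping in the middle paragraph; everything else is essentially formal.
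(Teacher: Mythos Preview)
Your proof is correct and follows essentially the same approach as the paper's: both reduce to a single pair of adjacent columns, identify the restriction of $\sigma$ there with a pair of permutations $(\alpha_j,\beta_j)$ (the paper writes $(\sigma^{(1)},\sigma^{(2)})$), and match the loops of $\loops(\sigma\cdot S,S)$ with the cycles of $\beta_j^{-1}\alpha_j$. The only cosmetic difference is that the paper reaches the loop lengths via the composition $(\sigma\cdot S)\,S$ as a permutation and the preceding lemma on cycle types, whereas you read them off directly from the two-step walk on the bipartite graph.
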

\begin{proof}
Young diagram $2 \lambda$ can be viewed as a concatenation of rectangular Young
diagrams of size $i\times 2$ ($i$ parts, all of them equal to $2$); for
this reason it is enough to proof the lemma for the case when
$2\lambda=i\times 2$. 
Permutation $\sigma$ can be viewed as a pair $(\sigma^{(1)}, \sigma^{(2)})$
where $\sigma^{(j)}\in\Sym{i}$ is the permutation of $j$-th column. Then
$$ (-1)^{\sigma}= (-1)^{\sigma^{(1)}} (-1)^{\sigma^{(2)}} = 
(-1)^{\sigma^{(1)} \left(  \sigma^{(2)} \right)^{-1} } =
(-1)^{(\ell_1-1)+(\ell_2-1)+\cdots}, $$
where $\ell_1,\ell_2,\dots$ are the lengths of the cycles of the permutation
$\sigma^{(1)} \left(\sigma^{(2)} \right)^{-1} $.

Let $(\Box[c,r])$ denote the box of the Young diagram in the column $c$ and
the row $r$. Then
\begin{multline*}
\sigma S \sigma^{-1} S (\Box[1,i]) = \sigma S \sigma^{-1}(\Box[2,i]) = \sigma S
\big(\Box[2,(\sigma^{(2)})^{-1}(i)]\big) \\
 = \sigma\big(\Box[1,(\sigma^{(2)})^{-1}(i)]\big) = \Box 
[1,\sigma^{(1)} \left(\sigma^{(2)} \right)^{-1}(i) ].
\end{multline*}
So $\sigma S \sigma^{-1} S = (\sigma \cdot S) S$ permutes the first column and
its restriction to the first column has cycles of length $\ell_1,\ell_2,\dots$.
The same is true for the second column. It follows that $(\sigma \cdot S) S$ has
cycles of length $\ell_1,\ell_1,\ell_2,\ell_2,\dots$ or, equivalently, the
lengths of the loops of $\loops(\sigma \cdot S, S) $ are equal to
$2\ell_1,2\ell_2,\dots$ which finishes the proof.
\end{proof}

The last lemma of this paragraph concerns the structure of the set of couples
of pair-partitions of $[2n]$ endowed with the diagonal action of the symmetric
group. From the definition of the graph $\loops(S_1,S_2)$ it is clear that 
$\loops(\sigma S_1,\sigma S_2)$ and $\loops(S_1,S_2)$ are isomorphic as
bipartite graphs, thus they have the same type. Conversely:

\begin{lemma}\label{LemCountCouples}
    The set of couples $(S_1,S_2)$ of type $\mu$ forms exactly
    one orbit under the diagonal action of the symmetric group $\Sym{2n}$.
    Moreover, there are exactly $\frac{(2n)!}{z_\nu 2^{\ell(\nu)}}$ of them.
\end{lemma}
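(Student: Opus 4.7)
The plan is to apply the orbit--stabilizer theorem, after first observing that the \emph{type} is a complete $\Sym{2n}$-invariant of a couple of pair-partitions. That type is $\Sym{2n}$-invariant is immediate: for any $\sigma \in \Sym{2n}$, the graph $\loops(\sigma\cdot S_1, \sigma\cdot S_2)$ is obtained from $\loops(S_1, S_2)$ by renaming each edge label $i$ as $\sigma(i)$, so the two graphs are isomorphic as bicolored edge-labeled graphs and their loop-length multisets coincide.

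For transitivity within a fixed type $\mu$, given two couples $(S_1, S_2)$ and $(S_1', S_2')$ of type $\mu$, both graphs $\loops(S_1, S_2)$ and $\loops(S_1', S_2')$ are disjoint unions of bipartite cycles of lengths $2\mu_1, \dots, 2\mu_{\ell(\mu)}$. I would pair up cycles of equal length and, on each pair, choose a color-preserving bijection between the two edge sets (which amounts to traversing both cycles in a consistent direction starting from vertices of matching colors). Viewing the resulting bijection of edge labels as a permutation $\sigma \in \Sym{2n}$ yields an element with $\sigma \cdot S_i = S_i'$ for $i=1,2$.

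The remaining step is to compute the stabilizer $\Stab(S_1, S_2)$ of a fixed couple of type $\mu$. A permutation $\sigma$ stabilizes $(S_1, S_2)$ precisely when the induced permutation of edge labels is a color-preserving automorphism of the bipartite multigraph $\loops(S_1, S_2)$. Within each bipartite cycle of length $2m$, the color-preserving automorphism group is dihedral of order $2m$ (rotations by an even number of edges, together with reflections through pairs of vertices of the same color). Between distinct cycles one may freely permute components of equal length, contributing an extra factor $\prod_i m_i(\mu)!$. Collecting these contributions,
\[
|\Stab(S_1, S_2)| = \prod_{j=1}^{\ell(\mu)}(2\mu_j) \cdot \prod_i m_i(\mu)!
= 2^{\ell(\mu)} \, \prod_j \mu_j \, \prod_i m_i(\mu)! = 2^{\ell(\mu)} z_\mu,
\]
and the orbit--stabilizer theorem then delivers the orbit size $(2n)!/\bigl(z_\mu 2^{\ell(\mu)}\bigr)$.

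The most delicate point is the order-$2m$ count of color-preserving automorphisms of a bipartite $2m$-cycle: a priori its full automorphism group is dihedral of order $4m$, but the $m$ reflections through edge midpoints interchange the two color classes and must be excluded, leaving exactly the dihedral group of order $2m$.
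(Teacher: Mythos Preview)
Your proof is correct and follows essentially the same approach as the paper: both establish transitivity by lifting a bicolored graph isomorphism to a permutation of edge labels, and both compute the stabilizer by counting how a permutation commuting with $S_1$ and $S_2$ can act on each loop and permute loops of equal length, arriving at $2^{\ell(\mu)} z_\mu$ via orbit--stabilizer. The only cosmetic slip is the phrase ``reflections through pairs of vertices of the same color'': when the loop length is $2m$ with $m$ odd, the surviving reflections pass through one black and one white vertex, but your count (exclude the $m$ edge-midpoint reflections, keep $m$ vertex reflections and $m$ even rotations) is correct regardless.
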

\begin{proof}
    Let us consider two couples $(S_1,S_2)$ and $(S_1',S_2')$ of type $\mu$
    such that both graphs $G:=\loops(S_1,S_2)$ and $G':=\loops(S_1',S_2')$ are
    collections of loops of lengths $2\mu_1,2\mu_2\dots$.
    These two graphs are isomorphic as vertex-bicolored graphs.
    Let $\varphi$ be any isomorphism of them.
    As it sends the edges of $G$ to the edges of $G'$, it can be
    seen as a permutation in $\Sym{2n}$.
    As it sends the black (resp.~white) vertices of $G$ to the black
    (resp.~white) vertices of $G'$,
    one has: $\varphi(S_1)=S'_1$ (resp.~$\varphi(S_2)=S'_2$).
    Thus all couples of pair-partitions of type $\mu$ are in the same orbit.

    Fix a couple $(S_1,S_2)$ of type $\mu$ and denote by
    $L_1,\dots,L_{\ell(\mu)}$ the loops of the graph $\loops(S_1,S_2)$.
    Moreover we fix arbitrarily one edge $e_i$ in each loop $L_i$.
    Let $\sigma$ belong to the stabilizer of the action of $\Sym{2n}$ on a
    $(S_1,S_2)$; in other words $\sigma$ commutes with $S_1$ and $S_2$.    
    Such a $\sigma$ induces a permutation $\tau$ of the loops $(L_i)$ respecting
    their sizes; there are $\prod_i m_i(\mu)!$ such permutations.
    Besides, once $\tau$ is fixed, there are $2\mu_i$ possible images for $e_i$
    (it can be any element of the loop $\tau(L_i)$, which has the same size
    as $L_i$ which is equal to  $2\mu_i$).
    As $\sigma(S_j(i))=S_j(\sigma(i))$ for $j=1,2$, the permutation $\sigma$
    is entirely determined by the values of $\sigma(e_i)$.
    Conversely, if we fix $\tau$ and some compatible values for $\sigma(e_i)$,
    there is one permutation $\sigma$ in the centralizer of $S_1$ and $S_2$
    corresponding to these values.
    Finally, the cardinality of this centralizer
    is equal to $z_\mu\ 2^{\ell(\mu)}=\prod_i m_i(\mu)!\ (2i)^{m_i(\mu)}$.
\end{proof}

\subsection{Pair-partitions and tensors}\label{SubsectPairPartitionsTensors}
%
%
%

If $P$ is a pair-partition of the ground set $[2n]$, we will associate
to it the tensor
$$ \Psi_P = \sum_{1\leq i_1,\dots,i_{2n} \leq d} \delta_{P}(i_1,\dots,i_{2n})\
e_{i_1} \otimes \cdots \otimes e_{i_{2n}} \in (\R^d)^{\otimes 2n}, $$
where $\delta_{P}(i_1,\dots,i_{2n})$ is equal to $1$ if $i_k=i_l$ for all
$\{k,l\}\in P$ and is equal to zero otherwise. 
The symmetric group $\Sym{2n}$ acts on the set of pair-partitions
and on the set of tensors $(\R^{d})^{\otimes 2n}$ and
it is straightforward that
$P\mapsto \Psi_P$ is an intertwiner with respect to these two actions.

\begin{lemma}
Let $Z \in (\R^d)^{\otimes 2n}$. Then
$$\phi_Z(X)= \langle Z, X^{\otimes 2n} \Psi_S \rangle $$
with respect to the standard scalar product in $(\R^d)^{\otimes 2n}$, where
$S$, given by \eqref{eq:first-pair-partition}, is the first pair-partition.
\end{lemma}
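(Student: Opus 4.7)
The plan is to verify the identity by a direct unwinding of the definitions. Both sides depend linearly on $Z$, so I would first reduce to the case of an elementary tensor $Z = v_1 \otimes \cdots \otimes v_{2n}$; for such a $Z$, the definition of $\phi_Z(X)$ is already a product of $n$ inner products $\langle X^T v_{2k-1}, X^T v_{2k}\rangle$, and the task is to recognize exactly the same factorization on the right-hand side.

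Next I would write $\Psi_S$ explicitly. Since $S = \{\{1,2\},\{3,4\},\ldots,\{2n-1,2n\}\}$, the indicator $\delta_S(i_1,\ldots,i_{2n})$ forces $i_{2k-1}=i_{2k}$ for each $k$, so
$$\Psi_S = \sum_{j_1,\ldots,j_n=1}^d e_{j_1}\otimes e_{j_1}\otimes e_{j_2}\otimes e_{j_2}\otimes \cdots \otimes e_{j_n}\otimes e_{j_n}.$$
Applying $X^{\otimes 2n}$ via the diagonal action \eqref{eq:action} replaces every $e_{j_k}$ by $X e_{j_k}$. Taking the standard inner product with $Z$ and factoring the sum over independent indices yields
$$\langle Z, X^{\otimes 2n} \Psi_S\rangle = \prod_{k=1}^n \sum_{j_k=1}^d \langle v_{2k-1}, X e_{j_k}\rangle \langle v_{2k}, X e_{j_k}\rangle.$$

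Finally I would invoke the elementary identity $\langle v, X e_j\rangle = \langle X^T v, e_j\rangle$, so that each inner sum becomes $\sum_j \langle X^T v_{2k-1}, e_j\rangle \langle X^T v_{2k}, e_j\rangle = \langle X^T v_{2k-1}, X^T v_{2k}\rangle$ by orthonormality of the basis $(e_j)$. The resulting product over $k$ matches the defining expression for $\phi_Z(X)$, concluding the proof. There is no genuine obstacle in the argument; the only care required is bookkeeping the indices so that the ``consecutive pairs'' structure of the first pair-partition $S$ is seen to correspond precisely to the consecutive bracketing of the vectors $v_i$ in the definition of $\phi_Z$.
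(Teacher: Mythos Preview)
Your proof is correct and follows essentially the same approach as the paper: both reduce by linearity to an elementary tensor $Z=v_1\otimes\cdots\otimes v_{2n}$, expand $\Psi_S$ explicitly using the structure of the first pair-partition, and then factor the inner product into $\prod_k \langle X^T v_{2k-1}, X^T v_{2k}\rangle$ via the adjoint identity and orthonormality of $(e_j)$.
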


\begin{proof}
We can assume by linearity that $Z=v_1 \otimes \ldots \otimes v_{2n}$. The right-hand side becomes:
\begin{multline*}
 \langle Z, X^{\otimes 2n} \Psi_S \rangle  = \\ \sum_{i_1,\dots,i_n}
\big\langle v_1 \otimes \cdots \otimes v_{2n},
  X e_{i_1}\otimes X e_{i_1} \otimes \cdots \otimes X e_{i_n}
\otimes X e_{i_n} \big\rangle \\
= \sum_{1\leq i_1,\dots,i_n \leq d} \ \ \prod_{j=1}^n \ \ \langle v_{2j-1}, X
e_{i_j} \rangle \cdot \langle v_{2j}, X e_{i_j} \rangle \\
= \prod_{j=1}^n \left[\sum_{1 \leq i \leq d} \langle X^T v_{2j-1}, e_i \rangle
\cdot \langle X^T v_{2j}, e_i \rangle \right]
= \prod_{j=1}^n \langle X^T v_{2j-1}, X^T v_{2j} \rangle. \qedhere
\end{multline*}
\end{proof}

\begin{lemma}\label{LemPhiPsiPowerSum}
Let $P$ be a pair-partition of $[2n]$ and $S$, as before, the
pair-partition of the same set given by \eqref{eq:first-pair-partition}. Then
\begin{multline*} \phi_{\Psi_P}(X)=\langle \Psi_P, X^{\otimes 2n}\Psi_S \rangle
= \Tr
\left[(XX^T)^{\ell_1}\right] 
\ \Tr \left[(XX^T)^{\ell_2}\right] \cdots \\ = p_{\loops(P,S)}(\Sp(X X^T)),
\end{multline*}
where $2\ell_1,2\ell_2,\dots$ are the lengths of the loops of $\loops(P,S)$.
\end{lemma}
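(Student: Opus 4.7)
The first equality $\phi_{\Psi_P}(X)=\langle \Psi_P, X^{\otimes 2n}\Psi_S\rangle$ is a direct instance of the preceding lemma applied to $Z=\Psi_P$, so the content of the statement lies in evaluating this scalar product. My plan is to expand everything in the standard basis, collapse the inner summations to matrix entries of $A:=XX^T$, and then recognize the remaining sum as a product of traces, one per loop of $\loops(P,S)$.

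First I would use the defining formula for $\Psi_S$ to write
\[
X^{\otimes 2n}\Psi_S = \sum_{j_1,\dots,j_n=1}^d (Xe_{j_1})\otimes (Xe_{j_1})\otimes\cdots\otimes (Xe_{j_n})\otimes (Xe_{j_n}),
\]
and then take the inner product with $\Psi_P$. Writing $X_{ab}=\langle e_a, Xe_b\rangle$ and summing out each $j_k$ via $\sum_{j} X_{ij}X_{i'j}=A_{i,i'}$, this yields
\[
\langle \Psi_P, X^{\otimes 2n}\Psi_S\rangle = \sum_{i_1,\dots,i_{2n}=1}^d \delta_P(i_1,\dots,i_{2n}) \prod_{k=1}^n A_{i_{2k-1},i_{2k}}.
\]
The delta factor forces the indices to be constant on each pair of $P$, so effectively the sum is over functions from the black vertices of $\loops(P,S)$ (the pairs of $P$) to $\{1,\dots,d\}$, weighted by one factor $A_{a,b}$ per white vertex (the pairs of $S$), where $a,b$ are the values at its two black neighbors.

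Next, the key combinatorial step: the graph $\loops(P,S)$ is a disjoint union of its loops, so the sum factorizes over these loops. Consider a single loop of length $2\ell$; traversing it cyclically gives an alternating sequence $B_1, W_1, B_2, W_2, \ldots, B_\ell, W_\ell$ of $\ell$ black and $\ell$ white vertices, in which each white vertex $W_k$ has the two neighbours $B_k$ and $B_{k+1}$ (indices mod $\ell$). Summing over the $\ell$ indices $x_1,\dots,x_\ell$ carried by the black vertices contributes
\[
\sum_{x_1,\dots,x_\ell=1}^d A_{x_1,x_2}A_{x_2,x_3}\cdots A_{x_{\ell-1},x_\ell}A_{x_\ell,x_1} = \Tr(A^\ell) = \Tr\bigl((XX^T)^\ell\bigr).
\]
Multiplying these contributions over all loops, whose half-lengths are $\ell_1,\ell_2,\dots$, produces $\prod_i \Tr((XX^T)^{\ell_i}) = p_{\loops(P,S)}(\Sp(XX^T))$, which is the desired identity.

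The only mildly delicate point is bookkeeping in the loop step: one must check that the alternating structure of the loop (which is automatic since edges of $\loops(P,S)$ go between black and white vertices) and the fact that $A$ is symmetric guarantee that the cyclic product of $A$'s really does collapse to $\Tr(A^\ell)$ regardless of how one chooses to traverse the loop and to label its vertices. Once this is observed, the rest is a direct computation.
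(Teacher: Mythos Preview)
Your argument is correct. The expansion in the standard basis, the collapse of each $j_k$-sum to an entry of $A=XX^T$, the factorisation over the loops of $\loops(P,S)$, and the identification of each loop contribution with $\Tr(A^\ell)$ are all valid; your remark that the symmetry of $A$ absorbs any ambiguity in the traversal direction of a loop is exactly the point that makes the bookkeeping go through.

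The paper proceeds a bit differently. Instead of handling a general $P$ directly, it first uses the transitivity of the $\Sym{2n}$-action on couples of pair-partitions of a given type (Lemma~\ref{LemCountCouples}) together with the $\Sym{2n}$-equivariance of $P\mapsto\Psi_P$ to reduce the one-loop case to the single canonical choice $P'=\{\{2,3\},\{4,5\},\dots,\{2\ell,1\}\}$, for which the computation of $\langle\Psi_{P'},X^{\otimes 2n}\Psi_S\rangle=\Tr(XX^T)^\ell$ is immediate and requires no appeal to the symmetry of $A$. The general case is then obtained by multiplying over loops. Your approach trades this reduction step for a slightly more careful combinatorial analysis of the loop; it is more self-contained (it does not invoke Lemma~\ref{LemCountCouples}), while the paper's approach sidesteps the orientation bookkeeping altogether.
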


\begin{proof}
Let us consider the case where $\loops(P,S)$ has only one loop of length $2
\ell$.
Define $P'=\big\{ \{2,3\},\{4,5\},\dots,\{2\ell-2,2\ell-1\},\{2\ell,1\}\big\}$
Then the couples $(P,S)$ and $(P',S)$ have the same type and thus,
by Lemma \ref{LemCountCouples}, there exists a permutation $\sigma$ such that
$\sigma \cdot P'= P$ and $\sigma \cdot S=S$. Then
\begin{multline*} 
    \langle \Psi_P, X^{\otimes 2n}\Psi_S \rangle
 = \langle \sigma \Psi_{P'}, X^{\otimes 2n} \sigma \Psi_S \rangle
 = \langle \sigma \Psi_{P'}, \sigma X^{\otimes 2n} \Psi_S \rangle \\
 = \langle \Psi_{P'}, X^{\otimes 2n}\Psi_S \rangle.
 \end{multline*}
 We used the facts that $P \mapsto \Psi_P$ is an intertwiner for the symmetric
 group action, that this action commutes with $X^{\otimes 2n}$ and
 that it is a unitary action.
 Therefore, it is enough to consider the case $P=P'$. In this case,
$$\Psi_P=\sum_{1 \leq j_1,\dots,j_\ell \leq d} e_{j_\ell} \otimes e_{j_1}
\otimes
e_{j_1} \otimes \dots \otimes e_{j_{\ell-1}} \otimes e_{j_{\ell-1}} \otimes
e_{j_\ell}.$$
Therefore one has:
\begin{align*}
 \phi_{\Psi_P}(X) &= \sum_{1 \leq j_1,\dots,j_\ell \leq d} \langle X^T
e_{j_\ell},
X^T e_{j_1} \rangle \cdot \langle X^T e_{j_1}, X^T e_{j_2} \rangle \cdots
\langle X^T e_{j_{\ell-1}}, X^T e_{j_\ell} \rangle \\
&= \sum_{1 \leq j_1,\dots,j_\ell \leq d} \langle X X^T e_{j_\ell}, e_{j_1}
\rangle
\cdot \langle X X^T e_{j_1}, e_{j_2} \rangle \cdots \langle X X^T
e_{j_{\ell-1}},
e_{j_\ell} \rangle\\
&= \sum_{1 \leq j_1,\dots,j_\ell \leq d} (X X^T)_{j_1,j_\ell} \cdot (X
X^T)_{j_2,j_1} \cdots (X X^T)_{j_\ell,j_{\ell-1}} \\ 
& = \Tr(X X^T)^\ell.
\end{align*}
The general case is simply obtained by multiplication of the above one-loop
case.
\end{proof}

It follows that $X\mapsto \phi_{\Psi_P}(X)$ is invariant under the left action of the orthogonal group
$O_d(\R)$.
The above discussion shows that if $P$ is a pair-partition (or, more generally,
a formal linear combination of pair-partitions) then condition
\ref{item:left-invariance} is fulfilled for $z_\lambda=\Psi_P$. For this reason
we will look for candidates for $z_\lambda$ corresponding to zonal polynomials
in this particular form.

   \subsection{Young symmetriser}


Let a partition $\lambda$ be fixed; we denote  $n=|\lambda|$.
We consider the Young tableau $T$ of shape $2\lambda$ in which boxes are
numbered consecutively along the rows. This tableau was chosen in such a way
that if we interpret the pair-partition $S$ as a pairing of the appropriate
boxes of $T$ then a box in the column $2i-1$ is paired with the box in the
column $2i$ in the same row, where $i$ is a positive integer (these two boxes
will be called neighbors in the Young diagram $2\lambda$).

Tableau $T$ allows us to identify boxes
of the Young diagram $2\lambda$ with the elements of the set $[2n]$.
In particular, permutations from $\Sym{2n}$ can be interpreted as permutations
of the boxes of $2\lambda$. We denote 
\begin{align*} 
P_{2\lambda}=& \{\sigma\in \Sym{2n} \colon \sigma \text{ preserves each row
of }2\lambda \}, \\
Q_{2\lambda}=& \{\sigma\in \Sym{2n} \colon \sigma \text{ preserves each column
of }2\lambda \}
\end{align*}
and define
\begin{align*} 
a_{2\lambda} = & \sum_{\sigma\in P_{2\lambda}} \sigma \in \C[\Sym{2n}],\\
b_{2\lambda} = & \sum_{\sigma\in Q_{2\lambda}} (-1)^{|\sigma|} \sigma \in
\C[\Sym{2n}],\\
c_{2\lambda} = &  b_{2\lambda} a_{2\lambda}.
\end{align*} 

The element $c_{2\lambda}$ is called \emph{Young symmetriser}.
There exists some non-zero scalar $\alpha_{2\lambda}$ such that
$\alpha_{2\lambda} c_{2\lambda}$ is a projection. Its image $\C[\Sym{2n}]
\alpha_{2\lambda} c_{2\lambda}$ under multiplication from the
right on the left-regular representation gives an irreducible representation
$\rho^{2\lambda}$ of the symmetric group (where the symmetric group acts by left
multiplication) associated to the Young diagram $2\lambda$
(see \cite[Theorem 4.3, p. 46]{FultonHarrisRepresentation}). 

Recall (see \cite[Corollary 1.3.14]{RepresentationsOVApproach})
that there is also a central projection in $\C[\Sym{2n}]$, denoted $\p_{2\lambda}$,
whose image $\C[\Sym{2n}] \p_{2\lambda}$ under multiplication from the
right (or, equivalently, from the left) on the left-regular representation is
the sum of all irreducible representations of type $\rho^{2\lambda}$
contributing to $\C[\Sym{2n}]$. It follows that
$\C[\Sym{2n}] c_{2\lambda}$ is a subspace of $\C[\Sym{2n}] \p_{2\lambda}$.
It follows that there is an inequality
\begin{equation}
\label{eq:inequality}
\alpha_{2\lambda} c_{2\lambda} \leq \p_{2\lambda} 
\end{equation}
between projections in $\C[\Sym{2n}]$, i.e.
$$\alpha_{2\lambda} c_{2\lambda} \p_{2\lambda} =
\p_{2\lambda} \alpha_{2\lambda} c_{2\lambda}  =
\alpha_{2\lambda} c_{2\lambda}.$$

\subsection{Schur-Weyl duality}\label{SubsectSchurWeyl}
The symmetric group $\Sym{2n}$ acts on the vector space $(\R^d)^{\otimes 2n}$
by permuting the factors and the linear group $\Gl_d(\R)$ acts on the same space
by the diagonal action \eqref{eq:action}. 
These two actions commute and Schur-Weyl duality 
(see \cite[paragraph A.8]{Macdonald1995}) asserts that,
as a representation of $\Sym{2n} \times \Gl_d(\R)$, one has:
$$(\R^d)^{\otimes 2n} \simeq \bigoplus_{\mu \vdash 2n} V_\mu \times U_\mu,$$
where $V_\mu$ (resp.~$U_\mu$) is the irreducible representation of $\Sym{2n}$
(resp.~$\Gl_d(\R)$) indexed by $\mu$ (as we assumed in Section 
\ref{subsec:definition} that $d \geq 2n$, the representation $U_\mu$
does always exist).
But $\p_{2\lambda}(V_\mu)=\delta_{\mu,2\lambda} V_\mu$,
therefore the image $\p_{2\lambda} \left( (\R^d)^{\otimes 2n}\right)$ of the
projection
$\p_{2\lambda}$ is, as representation of $\Gl_d(\R)$, a sum of some number of
copies of the irreducible representation of $\Gl_d(\R)$ associated to the
highest weight $2\lambda$.
Using inequality \eqref{eq:inequality}, we know that
$\alpha_{2\lambda} c_{2\lambda} \left( (\R^d)^{\otimes 2n}\right)$
is a subspace of $\p_{2\lambda}
\left( (\R^d)^{\otimes 2n} \right)$. In this way, we proved that
$\alpha_{2\lambda} c_{2\lambda} \left( (\R^d)^{\otimes 2n}\right)$ is a
representation of $\Gl_d(\R)$ which is a sum of some number of copies of 
the irreducible representation of $\Gl_d(\R)$ associated with the highest weight
$2\lambda$.

Thus the element $c_{2\lambda} \cdot \Psi_S$ of $(\R^d)^{\otimes 2n}$
fulfills condition \ref{item:representation}.


   \subsection{A tensor satisfying James' conditions}
   Using the results of Section \ref{SubsectPairPartitionsTensors}
   and \ref{SubsectSchurWeyl}, we know that
   $$ z^{(d)}_\lambda:=\Psi_{c_{2\lambda} \cdot S} = 
c_{2\lambda} \Psi_S \in (\R^d)^{\otimes 2n} $$
fulfills conditions \ref{item:left-invariance} and \ref{item:representation}.

Therefore, as explained in Section \ref{subsec:definition},
if $\phi_{z^{(d)}_\lambda}$ is non-zero,
there exists a constant $C_\lambda$ such that:
$$\phi_{z^{(d)}_\lambda}(X) = C_\lambda Z_\lambda(\Sp(X X^T)).$$
Of course this is true also if the left hand-side is equal to zero.
Besides, using Lemma \ref{LemPhiPsiPowerSum}, one gets:
\begin{multline*} 
\phi_{c_{2\lambda} \Psi_S }(X)  = 
\sum_{\sigma_1\in Q_{2\lambda}}
\sum_{\sigma_2\in
P_{2\lambda}} (-1)^{\sigma_1}   
\langle  \Psi_{\sigma_1 \sigma_2 \cdot S}, 
X^{\otimes 2n} \Psi_S \rangle \\
 = 
\sum_{\sigma_1\in Q_{2\lambda}}
\sum_{\sigma_2\in
P_{2\lambda}} (-1)^{\sigma_1} \   
p_{\loops(\sigma_1 \sigma_2 \cdot S ,S)} (\Sp(X X^T)),
\end{multline*}
where the power-sum symmetric functions $p$ should be understood as in
\eqref{eq:power-sum}. 
Finally, we have shown that 
\begin{displaymath}
    Y_\lambda := \sum_{\sigma_1\in Q_{2\lambda}}    
    \sum_{\sigma_2\in P_{2\lambda}} (-1)^{\sigma_1} \
    p_{\loops(\sigma_1 \sigma_2 \cdot S, S)}
\end{displaymath}
and $C_\lambda Z_\lambda$ have the same evaluation on $\Sp(X X^T)$.
As this is true for all $X \in \Gl_d$ and all $d \geq 2|\lambda|$,
the two symmetric function $Y_\lambda$ and $C_\lambda Z_\lambda$ are equal.
We will use this fact in the following.

   \subsection{End of proof of Theorem \ref{theo:zonal-polynomials}}
      \label{subsec:proof-of-theo-zonal}

\begin{proof}
We know that
\begin{multline}
    C_\lambda Z_\lambda = \sum_{\sigma_1\in Q_{2\lambda}}    
        \sum_{\sigma_2\in P_{2\lambda}} (-1)^{\sigma_1} \
            p_{\loops(\sigma_1 \sigma_2 \cdot S, S)} \\
            = \sum_{\sigma_1\in Q_{2\lambda}}    
         \sum_{\sigma_2\in P_{2\lambda}} (-1)^{\sigma_1} \
             p_{\loops(\sigma_2 \cdot S, \sigma_1^{-1} \cdot S)}. 
             \label{EqPreuveMainTh}
\end{multline}
The set of pair-partitions which can be written as $\sigma_2 \cdot S$
with $\sigma_2 \in P_{2\lambda}$ is the set of pair-partitions of the boxes
of the Young diagram such that each pair
of connected boxes lies in the same row of the Young diagram
(we fixed the Young tableau $T$, so pair-partitions of the set $[2n]$
can be viewed as pair-partitions of the boxes of the Young diagram).
As $P_{2\lambda}$ is a group, each pair-partition in the orbit of $S$ can be
written as $\sigma_2 \cdot S$
with $\sigma_2 \in P_{2\lambda}$ in the same number of ways (say $C_2$).
Therefore, for any $\sigma_1\in Q_{2\lambda}$, 
\[\sum_{\sigma_2\in P_{2\lambda}} (-1)^{\sigma_1} \  
     p_{\loops(\sigma_2 \cdot S, \sigma^{-1} \cdot S)}    
     = C_2 \sum_{S_2} (-1)^{\sigma_1} \  
      p_{\loops(S_2, \sigma^{-1} \cdot S)},\]
where the sum runs over pair-partitions connecting boxes in the 
same row of $T$.

Analogously, the set of pair-partitions which can be written as $\sigma_1^{-1}
\cdot S$ for some $\sigma_1\in Q_{2\lambda}$ is the set of pair-partitions $S_1$
which match the elements of the $2j-1$ column of $T$ with the elements of the
$2j$-th column of $T$ for $1 \leq j \leq \lambda_1$ (it is equivalent to ask
that the boxes belonging to each cycle of $S_1 \circ S$ are in one column). As
before, such pair-partitions can all be written as $\sigma_1^{-1} \cdot S$ in
the same number of ways (say $C_1$). Besides, Lemma \ref{lem:sign} shows that
the sign $(-1)^{\sigma_1}$ depends only on 
$S_1 = \sigma_1^{-1} \cdot S$ and is equal to $(-1)^{\loops(S, S_1)}$.

Therefore, for any pair-partition $S_2$
\[\sum_{\sigma_1\in Q_{2\lambda}}
(-1)^{\sigma_1} \ p_{\loops(S_2, \sigma^{-1} \cdot S)}
= C_1 \sum_{S_1} (-1)^{\loops(S, S_1)} \ p_{\loops(S_2,S_1)}, \]
where the sum runs over pair-partitions $S_1$ such that
$S \circ S_1$ preserves each column of $T$.

Finally, Eq.~\eqref{EqPreuveMainTh} becomes
\begin{equation}
    \label{eq:zonal2}
    C_\lambda Z_\lambda= C_1 C_2 \sum_{S_1} \sum_{S_2} (-1)^{\loops(S,S_1)}
p_{\loops(S_1,S_2)},
\end{equation}
where the sum runs over $T$-admissible $(S_1,S_2)$. Recall that $T$-admissible
means that $S_2$ preserves each row of $T$ and $S \circ S_1$ preserves each
column.

To get rid of the numerical factors, we use the coefficient of $p_1^n$ in 
the power-sum expansion of zonal polynomials (given by Eq.~VI, (10.29)
in \cite{Macdonald1995}, see also Eqs.~VI, (10.27) and VII, (2.23)):
$$ [p_1^n] Z_\lambda = 1. $$
But the only pair of $T$-admissible pair-partitions $(S_1,S_2)$ such that
$\loops(S_1, S_2)$ is a union of $n$ loops (the latter implies automatically
that $S_1=S_2$) is $(S,S)$. Therefore the coefficient of $p_1^n$ in the double
sum of the right-hand side of \eqref{eq:zonal2} is equal to $1$ and finally:
   \[ Z_\lambda= \sum_{S_1} \sum_{S_2} (-1)^{\loops(S,S_1)}
p_{\loops(S_1,S_2)}. \qedhere \]
\end{proof}

\section{Formulas for zonal characters}\label{sec:Stanley}

This section is devoted to formulas for zonal characters; in particular,
the first goal is to prove Theorem \ref{theo:ZonalFeraySniady-A1}.

\subsection{Reformulation of Theorem \ref{theo:ZonalFeraySniady-A1}}
\label{subsec:reformulation}

Let $S_0$, $S_1$, $S_2$ be three pair-partitions of the set $[2k]$.
We consider the following function on the set of Young diagrams:
\begin{definition}
    Let $\lambda$ be a partition of any size. We define
    $N^{(2)}_{S_0,S_1,S_2}(\lambda)$ as the number of functions $f$ from
    $[2k]$ to the boxes of the Young diagram $2 \lambda$ such that
    for any $l\in[2k]$:
    \begin{enumerate}[label=(P\arabic*)]
        \addtocounter{enumi}{-1}
        \item
            \label{cond:P0}
            $f(l)$ and $f(S_0(l))$ are neighbors in the Young
            diagram $2\lambda$, \textit{i.e.}, if $f(l)$ is in the $2i+1$-th
            column (resp.~$2i+2$-th column), then $f(S_0(l))$ is the
	    box in the same row but
            in the $2i+2$-th column (resp.~$2i+1$-th column);
        \item
            \label{cond:P1}
            $f(l)$ and $f(S_0 \circ S_1(l))$ are in the same
            column;
        \item
            \label{cond:P2} $f(l)$ and $f(S_2(l))$ are in the same row.
    \end{enumerate}
    \label{def:Nb}

    We also define $\widehat{N}^{(2)}_{S_0,S_1,S_2}(\lambda)$ as the number of 
    injective functions fulfilling the above conditions.
\end{definition}

\begin{lemma}\label{lem:N2_N1}
    Let $S_0$, $S_1$, $S_2$ be pair-partitions.
    Then
    \[ N^{(2)}_{S_0,S_1,S_2} = 2^{|\loops(S_0,S_1)|} N^{(1)}_{S_0,S_1,S_2}. \]
\end{lemma}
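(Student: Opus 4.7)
\textbf{Proof plan for Lemma \ref{lem:N2_N1}.}

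The plan is to construct a surjective map from the set of functions counted by $N^{(2)}_{S_0,S_1,S_2}(\lambda)$ to the set of functions counted by $N^{(1)}_{S_0,S_1,S_2}(\lambda)$, and to show that each fiber has cardinality $2^{|\loops(S_0,S_1)|}$. Let $\pi$ denote the natural projection from the boxes of $2\lambda$ to the boxes of $\lambda$ that identifies each pair of neighbors (columns $2i+1$ and $2i+2$ in the same row collapse to column $i+1$). Given $f\colon[2k]\to 2\lambda$ satisfying \ref{cond:P0}--\ref{cond:P2}, I claim $g:=\pi\circ f$ satisfies \ref{cond:Q0}--\ref{cond:Q2}: condition \ref{cond:P0} says $f(l)$ and $f(S_0(l))$ are neighbors, so they have the same image under $\pi$, yielding \ref{cond:Q0}; condition \ref{cond:P2} plus the fact that $\pi$ preserves rows yields \ref{cond:Q2}; and \ref{cond:P1} together with \ref{cond:Q0} (applied to the identity $g(S_0 S_1 l)=g(S_1 l)$) yields \ref{cond:Q1} since the column of $\lambda$ containing $\pi(b)$ depends only on the column of $2\lambda$ containing $b$.

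Next I fix an arbitrary $g\colon[2k]\to\lambda$ counted by $N^{(1)}_{S_0,S_1,S_2}(\lambda)$ and count its preimages. A lift $f$ of $g$ is specified by a function $\varepsilon\colon[2k]\to\{0,1\}$ recording, for each $l$, whether $f(l)$ sits in the left (column $2i+1$) or right (column $2i+2$) half of the pair $\pi^{-1}(g(l))$. The constraint \ref{cond:P0} combined with \ref{cond:Q0} forces $\varepsilon(S_0(l))=1-\varepsilon(l)$. The constraint \ref{cond:P2} is already guaranteed by \ref{cond:Q2} because rows are preserved by $\pi$, so it imposes nothing on $\varepsilon$. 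Finally, \ref{cond:P1} combined with \ref{cond:Q1} says that $f(l)$ and $f(S_0 S_1(l))$ must lie in the same column of $2\lambda$, which (since they already lie in the same column of $\lambda$ via $g$) is equivalent to $\varepsilon(S_0 S_1(l))=\varepsilon(l)$. Combining with the $S_0$-relation, this rewrites as $\varepsilon(S_1(l))=1-\varepsilon(l)$.

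Thus the admissible $\varepsilon$ are exactly the functions satisfying $\varepsilon\circ S_0=1-\varepsilon$ and $\varepsilon\circ S_1=1-\varepsilon$. On each loop of the graph $\loops(S_0,S_1)$ the function $\varepsilon$ must alternate between the values $0$ and $1$ as one traverses the edges; since every loop of $\loops(S_0,S_1)$ has even length (because this bipartite graph is properly bicolored), exactly two such alternating patterns exist per loop, and different loops may be chosen independently. This gives $2^{|\loops(S_0,S_1)|}$ admissible $\varepsilon$, hence $2^{|\loops(S_0,S_1)|}$ preimages of $g$.

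No serious obstacle is anticipated; the only delicate point is the translation of \ref{cond:P1} into a relation on $\varepsilon$, where one must be careful to use \ref{cond:Q0} to turn $S_0\circ S_1$ into $S_1$ after projection, and to verify that the parity of loop lengths in $\loops(S_0,S_1)$ is exactly what guarantees consistency of the alternation.
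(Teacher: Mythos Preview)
Your argument is correct and follows essentially the same route as the paper: both prove the identity by composing with the projection $2\lambda\to\lambda$, checking that \ref{cond:P0}--\ref{cond:P2} descend to \ref{cond:Q0}--\ref{cond:Q2}, and then counting lifts by observing that the column-parity choice propagates along each loop of $\loops(S_0,S_1)$ and can be made independently on each loop. Your explicit use of the function $\varepsilon$ and the relations $\varepsilon\circ S_0=1-\varepsilon$, $\varepsilon\circ S_1=1-\varepsilon$ is a clean formalization of what the paper describes in words.
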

\begin{proof}
Let $\lambda$ be a Young diagram
and let $f$ be a function $f : [2k] \to 2\lambda$ verifying properties
\ref{cond:P0}, \ref{cond:P1} and \ref{cond:P2}. We consider the projection $p :
2\lambda \to \lambda$, which consists of forgetting the separations between the
neighbors in $2\lambda$. More precisely, the boxes $(2i-1,j)$ and $(2i,j)$ of
$2\lambda$ are both sent to the box $(i,j)$ of $\lambda$. It is easy to check
that the composition $\overline{f}=p \circ f$ fulfills \ref{cond:Q0},
\ref{cond:Q1}, \ref{cond:Q2}.

Consider a function $g:[2k] \to \lambda$ verifying
\ref{cond:Q0}, \ref{cond:Q1} and \ref{cond:Q2}.
We want to determine functions $f$ verifying \ref{cond:P0}, \ref{cond:P1}
and \ref{cond:P2} such that $\overline{f}=g$.
If $g(k)$ (which is equal to $g(S_0(k))$ by condition \ref{cond:Q0}) is equal
to
a box $(i,j)$ of $\lambda$, then $f(k)$ and $f(S_0(k))$ belong to
$\{(2i-1,j),(2i,j)\}$.
Therefore, $f$ is determined by the parity of the column of $f(k)$ for each $k$.
Besides, if $f(k)$ is in an even-numbered (resp.~odd-numbered) column,
then $f(S_0(k))$ and $f(S_1(k))$ are in an odd-numbered (resp.~even numbered)
column (by conditions \ref{cond:P0} and \ref{cond:P1}).
Therefore, if we fix the parity of the column of $f(k)$ for some $k$, it is 
also fixed for $f(k')$, for all $k'$ in the same loop of $\loops(S_0,S_1)$.
Conversely, choose for one number $i$ in each loop of $\loops(S_0,S_1)$,
which of the two possible values should be assigned to $f(i)$.
Then there is exactly one function respecting these values and verifying
condition \ref{cond:P0}, \ref{cond:P1} and \ref{cond:P2}
(condition \ref{cond:P2} is fulfilled for each function $f$ such that
$\overline{f}$ verifies \ref{cond:Q2}).
Thus, to each function $g$ with properties \ref{cond:Q0}, \ref{cond:Q1}
and \ref{cond:Q2} correspond exactly $2^{|\loops(S_0,S_1)|}$ functions $f$
with properties \ref{cond:P0}, \ref{cond:P1} and \ref{cond:P2}.
\end{proof}


The above lemma shows that in order to show Theorem
\ref{theo:ZonalFeraySniady-A1} it is enough to prove the following equivalent
statement:
\begin{theorem}\label{theo:ZonalFeraySniady-A2}
    Let $\mu$ be a partition of the integer $k$ and $(S_1, S_2)$ be a fixed
    couple of pair-partitions of the set $[2k]$ of type $\mu$.
    Then one has the following equality between functions on the set of Young
    diagrams:
    \begin{displaymath}
        \Sigma^{(2)}_\mu = \frac{1}{2^{\ell(\mu)}}
            \sum_{S_0} (-1)^{\loops(S_0,S_1)} N^{(2)}_{S_0,S_1,S_2},
    \end{displaymath}
    where the sum runs over pair-partitions of $[2k]$.
\end{theorem}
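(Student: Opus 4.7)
I plan to prove Theorem~\ref{theo:ZonalFeraySniady-A2} in two stages: first establish the analogous formula with the injective count $\widehat{N}^{(2)}$ (Definition~\ref{def:Nb}) in place of $N^{(2)}$, and then complete the proof via a separate ``forgetting injectivity'' argument.

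Write $n=|\lambda|$, $k=|\mu|$, and $\nu=\mu\cup 1^{n-k}$. The definition of $\Sigma_\mu^{(2)}$ combined with the identity $z_\nu=\binom{n-k+m_1(\mu)}{m_1(\mu)}(n-k)!\,z_\mu$ gives $\Sigma_\mu^{(2)}(\lambda)=z_\nu\,\theta_\nu^{(2)}(\lambda)/(n-k)!$, and Theorem~\ref{theo:zonal-polynomials} identifies $\theta_\nu^{(2)}(\lambda)=[p_\nu]Z_\lambda$ as a signed count of $T$-admissible pairs $(S_1',S_2')$ of\/ $[2n]$ of type~$\nu$. The reparameterization step associates to each such pair an injective map $f\colon[2k]\hookrightarrow[2n]$ (viewed as a map into the boxes of $2\lambda$ via $T$) whose image is $S$-closed and which sends the reference couple $(S_1,S_2)$ onto the non-trivial part of $(S_1',S_2')$. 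Setting $S_0:=f^{-1}\circ S\circ f$, one checks that the trivial part of $(S_1',S_2')$ is forced to consist of neighbor pairs outside $f([2k])$, that the $T$-admissibility conditions are equivalent to \ref{cond:P0}--\ref{cond:P2}, and that the sign satisfies $(-1)^{\loops(S,S_1')}=(-1)^{\loops(S_0,S_1)}$ (each of the $n-k$ trivial length-$2$ loops contributes a factor $1$). For each $T$-admissible $(S_1',S_2')$ of type $\nu$ the number of valid labelings $f$ equals $z_\mu\,2^{\ell(\mu)}\binom{n-k+m_1(\mu)}{m_1(\mu)}$: the stabilizer of the reference couple (Lemma~\ref{LemCountCouples}) times the choice of which of the $m_1(\nu)$ length-$2$ loops of $\nu$ are ``from $\mu$''. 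Assembling these factors yields
\[
\Sigma_\mu^{(2)}(\lambda)=\frac{1}{2^{\ell(\mu)}}\sum_{S_0}(-1)^{\loops(S_0,S_1)}\widehat{N}^{(2)}_{S_0,S_1,S_2}(\lambda).
\]

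The final step (carried out in Section~\ref{subsec:forgetting-injectivity}) is to replace $\widehat{N}^{(2)}$ by $N^{(2)}$, i.e.~to show
\[
\sum_{S_0}(-1)^{\loops(S_0,S_1)}\bigl[N^{(2)}_{S_0,S_1,S_2}(\lambda)-\widehat{N}^{(2)}_{S_0,S_1,S_2}(\lambda)\bigr]=0.
\]
Interchanging summations, this reduces to showing that for each fixed non-injective $f$ the signed sum over valid $S_0$ vanishes. The natural approach is a sign-reversing involution: if $f(a)=f(b)$ with $a\neq b$ (so that necessarily $\{a,b\}\notin S_0$, since otherwise P0 would force $f(a)$ to be its own neighbor), then $S_0$ has freedom in how it matches the fiber of $f(a)$ with the fiber of the neighbor box $S(f(a))$, and a canonical toggling of this matching should flip the parity of $|\loops(S_0,S_1)|$ while preserving \ref{cond:P0}--\ref{cond:P2}.

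The main obstacle is expected to be precisely this last step. Preservation of \ref{cond:P0}--\ref{cond:P2} under the naive swap that replaces $\{a,S_0(a)\},\{b,S_0(b)\}$ by $\{a,S_0(b)\},\{b,S_0(a)\}$ follows easily from the consequence $f(S_0(a))=f(S_0(b))$ of P0, but the parity of $|\loops(S_0,S_1)|$ does not always flip under this naive swap (for example, degenerate cases where $\{a,b\}\in S_1$ leave the cycle structure of $S_0S_1$ unchanged). Constructing the correct involution will likely require a case analysis keyed to the local $S_1$- and $S_2$-structure around the colliding elements, together with a canonical ordering of collisions to handle fibers of multiplicity greater than two.
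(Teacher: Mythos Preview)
Your outline matches the paper's proof in structure: the injective formula is obtained from Theorem~\ref{theo:zonal-polynomials} by the reparameterization and counting argument of Sections~\ref{subsec:extraction}--\ref{subsec:forgetting-injectivity} (the paper first treats $k=n$ and then extends, whereas you compress the two steps, but the content is the same), and the passage from $\widehat N^{(2)}$ to $N^{(2)}$ is done via a sign-reversing involution on $S_0$ for each fixed non-injective $f$.

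The obstacle you anticipate in the second stage, however, is illusory. The observation you are missing is a column-parity argument (Lemma~\ref{LemEvenDistance}): if $f$ satisfies \ref{cond:P0} and \ref{cond:P1}, then whenever $a$ and $b$ are adjacent in $\loops(S_0,S_1)$ (i.e.\ $\{a,b\}\in S_0$ or $\{a,b\}\in S_1$) the boxes $f(a)$ and $f(b)$ lie in columns of opposite parity, hence are distinct. By induction along the loop, $f(a)=f(b)$ forces $a$ and $b$ to lie at \emph{even} distance in $\loops(S_0,S_1)$; in particular your feared case $\{a,b\}\in S_1$ simply cannot occur for any $S_0$ contributing to the sum. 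With evenness in hand, the naive swap $S_0\mapsto(a\ b)\cdot S_0$ \emph{always} changes $|\loops(S_0,S_1)|$ by exactly~$1$: if $a,b$ lie in distinct loops the swap merges them, and if they lie in the same loop the even-distance constraint forces the swap to split it into two (Lemma~\ref{LemChangingSignInvolution}). No case analysis keyed to $S_1$ or $S_2$ is needed.

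The multiplicity concern is likewise a non-problem: fix once and for all the lexicographically smallest pair $(a,b)$ with $f(a)=f(b)$. This pair depends only on $f$, not on $S_0$, so $S_0\mapsto(a\ b)\cdot S_0$ is a genuine fixed-point-free involution on the set of pair-partitions for which $f$ satisfies \ref{cond:P0}--\ref{cond:P2}, regardless of how large the fibers of $f$ are.
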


We will prove it in Sections
\ref{subsec:extraction}--\ref{subsec:forgetting-injectivity}.

\subsection{Extraction of the coefficients}
      \label{subsec:extraction}
      Let $\mu$ and $\lambda$ be two partitions.
      In this paragraph we consider the case where $|\mu|=|\lambda|$.
If we look at the coefficients of a given power-sum function $p_\mu$ in
$Z_\lambda$,
using Theorem \ref{theo:zonal-polynomials}, one has:
\begin{displaymath}
    [p_\mu]Z_\lambda = \sum_{\substack{(S_1,S_2)\ T\text{-admissible}
\\ \type \loops(S_1,S_2)=\mu}}
    (-1)^{\loops(S , S_1)}.
\end{displaymath}
This equation has been proved in the case where $T$ and $S$ are, respectively,
the canonical Young tableaux and the first pair-partition, but the same proof
works for any
filling $T$ of $2\lambda$ by the elements of $[2|\lambda|]$ and
any pair-partition $S$ as long as $S$ matches the labels of the pairs of
neighbors of $2\lambda$ in $T$. As there
are $(2 |\lambda|)!$ fillings $T$ and one corresponding pair-partition $S=S(T)$
per
filling, one has:
\begin{displaymath}
    [p_\mu]Z_\lambda = \frac{1}{(2|\lambda|)!} \sum_{T}
    \sum_{\substack{(S_1,S_2)\ T\text{-admissible} \\
        \type\loops(S_1, S_2)=\mu}}
    (-1)^{\loops(S(T), S_1)},
\end{displaymath}
where the first sum runs over all bijective fillings of the diagram $2 \lambda$.
We can change the order of summation and obtain:
\begin{equation}
\label{EqPZSumS1S2}
[p_\mu]Z_\lambda =\frac{1}{(2|\lambda|)!}\!\!\!
\sum_{\substack{S_1,S_2\\ \type(S_1,S_2)=\mu}}\!\!\!
\left( \sum_{T}
(-1)^{\loops(S(T),S_1)}\ [(S_1,S_2)\text{ is $T$-admissible}]\right),
\end{equation}
where we use the convention that $[\text{condition}]$ is equal to $1$ if the
condition is true and is equal to zero otherwise.
Note that $\Sym{2n}$ acts on bijective fillings of $2\lambda$ by acting on
each box. It is straightforward to check that this action fulfills:
\begin{itemize}
    \item $S(\sigma \cdot T)=\sigma \cdot S(T)$;
    \item $(\sigma \cdot S_1,\sigma \cdot S_2)$ is $\sigma \cdot T$ admissible
        if and only if $(S_1,S_2)$ is $T$-admissible.
\end{itemize}

\begin{lemma}
The expression in the parenthesis in the 
right-hand side of Eq.~\eqref{EqPZSumS1S2} does not depend on $(S_1,S_2)$. 
\end{lemma}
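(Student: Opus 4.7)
The plan is to exploit the transitivity of the diagonal $\Sym{2n}$-action on couples of pair-partitions of a given type, combined with the equivariance properties of $T \mapsto S(T)$ and of $T$-admissibility that were just listed.

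More precisely, fix two couples $(S_1,S_2)$ and $(S_1',S_2')$ of pair-partitions of $[2n]$, both of type $\mu$. By Lemma \ref{LemCountCouples}, there exists $\sigma \in \Sym{2n}$ such that $\sigma \cdot S_1 = S_1'$ and $\sigma \cdot S_2 = S_2'$. The map $T \mapsto \sigma \cdot T$ is a bijection on the set of bijective fillings of $2\lambda$, so one may substitute $T' = \sigma \cdot T$ in the sum. Using the two bullet points stated just before the lemma, the indicator $[(S_1',S_2')\text{ is $T'$-admissible}]$ equals $[(S_1,S_2)\text{ is $T$-admissible}]$, and $S(T') = \sigma \cdot S(T)$, so that
\begin{equation*}
\loops\bigl(S(T'),S_1'\bigr) = \loops\bigl(\sigma \cdot S(T),\ \sigma \cdot S_1\bigr).
\end{equation*}

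The remaining step is to check that the sign $(-1)^{\loops(A,B)}$ is invariant under the simultaneous action of $\sigma$ on both arguments. This is immediate from the definition of $\loops(A,B)$: relabelling the vertices and edges by $\sigma$ produces an isomorphic edge-labeled bipartite graph, hence the same type and the same number of loops, so the signs agree. Consequently
\begin{equation*}
\sum_{T'} (-1)^{\loops(S(T'),S_1')}\ [(S_1',S_2')\text{ is $T'$-admissible}]
= \sum_{T} (-1)^{\loops(S(T),S_1)}\ [(S_1,S_2)\text{ is $T$-admissible}],
\end{equation*}
which is exactly the claim.

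There is no substantial obstacle here; the whole argument is a change of variable justified by Lemma \ref{LemCountCouples} and the two equivariance properties already spelled out. The only point that deserves a word of care is that the sign really is a class function of the couple under the diagonal $\Sym{2n}$-action, but this follows at once from the fact that the sign depends solely on the type of $\loops(S_1,S_2)$.
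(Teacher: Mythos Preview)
Your proof is correct and follows essentially the same route as the paper: both use Lemma~\ref{LemCountCouples} to find a $\sigma$ relating the two couples, then perform the change of variable $T\mapsto\sigma\cdot T$ (respectively $T\mapsto\sigma^{-1}\cdot T$) using the two equivariance bullet points. You are slightly more explicit than the paper in justifying that $(-1)^{\loops(\cdot,\cdot)}$ is invariant under the diagonal $\Sym{2n}$-action, but otherwise the arguments coincide.
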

\begin{proof}
Consider two couples $(S_1,S_2)$ and $(S'_1,S'_2)$, both of type $\mu$.
By Lemma \ref{LemCountCouples}, there exists a permutation $\sigma$ in
$\Sym{2n}$ such that $S'_1=\sigma \cdot S_1$ and $S'_2= \sigma \cdot S_2$.
Then 
\begin{multline*}
    \left( \sum_{T} (-1)^{\loops(S(T),S'_1)}\ 
    [(S'_1,S'_2)\text{ is $T$-admissible}]\right)\\
    = \left( \sum_{T} (-1)^{\loops(S(T),\sigma \cdot S_1)}\ 
    [(\sigma \cdot S_1,\sigma \cdot S_2)\text{ is $T$-admissible}]\right) \\
    = \left( \sum_{T} (-1)^{\loops(S(\sigma^{-1} \cdot T),S_1)}\
    [(S_1,S_2)\text{ is $\sigma^{-1} \cdot T$-admissible}]\right)\\
    = \left( \sum_{T'} (-1)^{\loops(S(T'),S_1)}\ 
        [(S_1,S_2)\text{ is $T'$-admissible}]\right),
\end{multline*}
where all sums run over bijective fillings of $2\lambda$. We used the fact that 
$T \mapsto \sigma \cdot T$ is a bijection of this set.
\end{proof}

Fix a couple of pair-partitions $(S_1,S_2)$ of type $\mu$.
As there are $\frac{(2|\mu|)!}{z_\mu 2^{\ell(\mu)}}$ couples of pair-partitions
of type $\mu$ (see Lemma \ref{LemCountCouples}), 
Eq.~\eqref{EqPZSumS1S2} becomes:
\[ [p_\mu] Z_\lambda = \frac{1}{z_\mu 2^{\ell(\mu)}}
   \left( \sum_{T'} (-1)^{\loops(S(T'),S_1)}\ 
       [(S_1,S_2)\text{ is $T'$-admissible}]\right).\]
As $|\mu|=|\lambda|$, one has:
\begin{multline*}
\Sigma^{(2)}_\mu(\lambda)= z_\mu\ [p_\mu]Z_\lambda = \frac{1}{2^{\ell(\mu)}}
\sum_{T} (-1)^{\loops(S(T),S_1)}\ [(S_1,S_2)\text{ is $T$-admissible}]\\
=\frac{1}{2^{\ell(\mu)}} \sum_{S_0} (-1)^{\loops(S_0,S_1)} 
\left( \sum_{T \text{ such that} \atop S(T)=S_0}
[(S_1,S_2)\text{ is $T$-admissible}] \right).
\end{multline*}

Bijective fillings $T$ of $2\lambda$ are exactly injective functions 
$f:[2n] \to 2\lambda$ (as the cardinality of two sets are the same,
such a function is automatically bijective).
Moreover, the conditions $S(T)=S_0$ and $(S_1,S_2)$ being $T$-admissible
correspond to conditions \ref{cond:P0}, \ref{cond:P1} and \ref{cond:P2}.
Using Definition \ref{def:Nb}, the last equality can be rewritten as follows:
when $|\mu|=|\lambda|$,
\begin{displaymath}
    \Sigma^{(2)}_\mu(\lambda)=\frac{1}{2^{\ell(\mu)}}
        \sum_{S_0} (-1)^{\loops(S_0,S_1)}
        \widehat{N}^{(2)}_{S_0,S_1,S_2}(\lambda).
    \label{eq:sigma_N_k=n}
\end{displaymath}

   \subsection{Extending the formula to any size}
Let us now look at the case where $|\mu|=k \leq n=|\lambda|$. We denote
$\widetilde{\mu} =\mu 1^{n-k}$. Then, using the formula above for
$z_{\widetilde{\mu}}\ [p_{\widetilde{\mu}}] Z_\lambda$, one has:
\begin{multline}
    \Sigma^{(2)}_\mu(\lambda) =  z_\mu\
	    \binom{n-k+m_1(\mu)}{m_1(\mu)} [p_{\widetilde{\mu}}]Z_\lambda
       = \frac{1}{(n-k)!} z_{\widetilde{\mu}} [p_{\widetilde{\mu}}]Z_\lambda
       \\
        = \frac{1}{2^{\ell(\mu)+ n-k}\ (n-k)!}
        \sum_{\widetilde{S_0}} (-1)^{\loops(\widetilde{S_0}, \widetilde{S_1})}
  \widehat{N}^{(2)}_{\widetilde{S_0},\widetilde{S_1},\widetilde{S_2}}(\lambda),
  \label{EqSigmaStilde}
\end{multline}
where $(\widetilde{S_1},\widetilde{S_2})$ is any fixed couple of pair-partitions
of type $\widetilde{\mu}$. We can choose it in the following way. Let
$(S_1,S_2)$ be a couple of
pair-partitions of the set $\{1,\dots,2k\}$ of type $\mu$ and define
$\widetilde{S_1}$
and $\widetilde{S_2}$ by, for $i=1,2$:
$$  \widetilde{S_i} = S_i \cup  
        \big\{ \{2k+1,2k+2\}, \dots, \{2n-1,2n\} \big\}. $$

\begin{lemma}\label{LemAnySize1}
With this choice of $(\widetilde{S_1},\widetilde{S_2})$, the quantity
$\widehat{N}^{(2)}_{\widetilde{S_0},\widetilde{S_1},\widetilde{S_2}}(\lambda)$
is equal to $0$ unless
\begin{equation}
\label{eq:structure-of-S0}
\widetilde{S_0}\big|_{\{2k+1,\dots,2n\}} =                          
         \big\{ \{2k+1,2k+2\}, \dots, \{2n-1,2n\} \big\}.
\end{equation}
\end{lemma}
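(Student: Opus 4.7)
The plan is to argue by contradiction: assuming that \eqref{eq:structure-of-S0} fails, I would exhibit a forced equality $f(l') = f(m)$ with $l' \neq m$, contradicting the injectivity of $f$ and hence showing that $\widehat{N}^{(2)}_{\widetilde{S_0},\widetilde{S_1},\widetilde{S_2}}(\lambda)=0$.

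Concretely, suppose there exists $j \in \{1,\ldots,n-k\}$ with $\widetilde{S_0}(2k+2j-1) \neq 2k+2j$, and set $l := 2k+2j-1$, $l' := 2k+2j$, and $m := \widetilde{S_0}(l)$. Then $m \neq l'$, and by construction of $\widetilde{S_1}$ and $\widetilde{S_2}$ one has $\widetilde{S_1}(l') = \widetilde{S_2}(l) = l$. I would then read off three constraints on an admissible $f$. Condition \ref{cond:P0} applied at $l$ forces $f(l)$ and $f(m)$ to be neighbors in $2\lambda$, i.e.\ to share a row and to occupy a pair of adjacent columns $\{2i-1,2i\}$. Condition \ref{cond:P2} applied at $l$ puts $f(l')$ in the same row as $f(l)$. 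Finally, condition \ref{cond:P1} applied at $l'$, noting that $\widetilde{S_0}\circ\widetilde{S_1}(l') = \widetilde{S_0}(l) = m$, puts $f(l')$ in the same column as $f(m)$.

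Combining these constraints, $f(l')$ must lie in the row of $f(l)$ and in the column of $f(m)$. But $f(m)$ is itself a neighbor of $f(l)$, so this row and this column of $2\lambda$ meet in the unique box $f(m)$. Hence $f(l') = f(m)$ while $l' \neq m$, contradicting injectivity. The argument does not distinguish the case $m \in \{2k+1,\ldots,2n\}$ from the case $m \in \{1,\ldots,2k\}$, so it handles uniformly every way in which \eqref{eq:structure-of-S0} can fail.

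I do not foresee any genuine obstacle: the argument rests only on the trivial fact that a row and a column of a Young diagram intersect in a single box. The one point that requires minimal care is the side on which condition \ref{cond:P1} is applied, namely at $l'$ rather than at $l$, which is what makes $\widetilde{S_0}\circ\widetilde{S_1}$ land on $m$ and so forces the column coincidence used above.
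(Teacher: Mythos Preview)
Your argument is correct and essentially identical to the paper's. The only cosmetic differences are that the paper proceeds directly (showing $2l+1=\widetilde{S_0}(2l+2)$ for every $l\geq k$) rather than by contradiction, and that it tracks the mirror-image element: it applies \ref{cond:P1} at $2l+1$ and \ref{cond:P0} at $2l+2$ to force $f(2l+1)=f(\widetilde{S_0}(2l+2))$, whereas you force $f(2l+2)=f(\widetilde{S_0}(2l+1))$; these are equivalent since $\widetilde{S_0}$ is an involution.
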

\begin{proof}
    Let $\widetilde{S_0}$ be a pair-partition and $f[2n] \rightarrow 2\lambda$
    be a bijection verifying conditions \ref{cond:P0}, \ref{cond:P1}
    and \ref{cond:P2} with respect to the triplet
    $\widetilde{S_0},\widetilde{S_1},\widetilde{S_2}$.
    
    For any $l\geq k$, condition \ref{cond:P1} shows that
    $f(2l+1)$ and $f(\widetilde{S_0}(2l+2))$ are in the same column.
    In addition, condition \ref{cond:P0} shows
that $f(2l+2)$ and $f(\widetilde{S_0}(2l+2))$ are neighbors and hence are in the same row.
Besides, condition \ref{cond:P2} shows that $f(2l+1)$ and $f(2l+2)$ are in the
same row. In this way we proved that $f(2l+1)$ and $f(\widetilde{S_0}(2l+2))$ are in the
same row and column, hence $f(2l+1)=f(\widetilde{S_0}(2l+2))$.
As $f$ is one-to-one, one has $2l+1=\widetilde{S_0}(2l+2)$.
In this way we proved that the existence of an injective function $f$ satisfying
\ref{cond:P0}, \ref{cond:P1} and \ref{cond:P2} implies that
$2l+1=\widetilde{S_0}(2l+2)$ for all $l \geq k$.
\end{proof}

We need now to evaluate 
$\widehat{N}^{(2)}_{\widetilde{S_0},\widetilde{S_1},\widetilde{S_2}}(\lambda)$
when \eqref{eq:structure-of-S0} is fulfilled.
\begin{lemma}\label{LemAnySize2}
    Let us suppose that $\widetilde{S_0}$ fulfills
    Eq.~\eqref{eq:structure-of-S0}.
    Then denote $S_0=\widetilde{S_0}\big|_{\{1,\dots,2k\}}$. One has:
  \[\widehat{N}^{(2)}_{\widetilde{S_0},\widetilde{S_1},\widetilde{S_2}}(\lambda)
    =
  2^{n-k} (n-k)!\ \widehat{N}^{(2)}_{S_0,S_1,S_2}(\lambda).\]
\end{lemma}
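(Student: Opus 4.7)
The plan is to count the injective functions $f\colon [2n]\to 2\lambda$ satisfying \ref{cond:P0}--\ref{cond:P2} with respect to $(\widetilde{S_0},\widetilde{S_1},\widetilde{S_2})$ by decomposing $f$ into its restriction to $[2k]$ and its values on $\{2k+1,\dots,2n\}$. Since $\widetilde{S_i}|_{[2k]}=S_i$ for $i=0,1,2$, the restriction $f|_{[2k]}$ is automatically an injective function from $[2k]$ to $2\lambda$ satisfying \ref{cond:P0}, \ref{cond:P1}, \ref{cond:P2} with respect to $(S_0,S_1,S_2)$, hence is one of the $\widehat{N}^{(2)}_{S_0,S_1,S_2}(\lambda)$ such functions. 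It then remains to count, for a fixed such restriction, how many ways one can extend $f$ to $\{2k+1,\dots,2n\}$.

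I would next examine what conditions \ref{cond:P0}--\ref{cond:P2} say about the indices in $\{2k+1,\dots,2n\}$ under the very specific choice of $\widetilde{S_0},\widetilde{S_1},\widetilde{S_2}$. For each $l\in\{k+1,\dots,n\}$ one has $\widetilde{S_0}(2l-1)=\widetilde{S_1}(2l-1)=\widetilde{S_2}(2l-1)=2l$, so \ref{cond:P0} forces $f(2l-1)$ and $f(2l)$ to be neighbors in $2\lambda$, while \ref{cond:P1} becomes the tautology that $f(2l-1)$ is in the same column as itself (since $\widetilde{S_0}\circ\widetilde{S_1}$ fixes $2l-1$), and \ref{cond:P2} is already implied by \ref{cond:P0}. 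Thus the only constraint imposed on the extension is that $(f(2l-1),f(2l))$ be an ordered pair of neighbors in $2\lambda$, and that these $n-k$ pairs together with the image of $f|_{[2k]}$ exhaust an injection.

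The Young diagram $2\lambda$ decomposes canonically into $n$ pairs of neighbors. By condition \ref{cond:P0} applied to $S_0$, the image of $f|_{[2k]}$ consists of exactly $k$ such neighbor-pairs. So the $n-k$ pairs of neighbors remaining are exactly those available for $(f(2k+1),f(2k+2)),\dots,(f(2n-1),f(2n))$. Choosing the extension then amounts to: first assigning the $n-k$ available neighbor-pairs to the $n-k$ index-pairs (that is $(n-k)!$ choices), and then, for each such assignment, choosing within each neighbor-pair which box receives the odd index and which receives the even one ($2^{n-k}$ choices). Multiplying these by $\widehat{N}^{(2)}_{S_0,S_1,S_2}(\lambda)$ gives the claimed formula.

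There is no real obstacle here: the argument is essentially a bookkeeping of what the three conditions degenerate to on the ``trivial'' tail of the pair-partitions. The only point that needs care is to check that the three conditions on $f|_{[2k]}$ coming from $(\widetilde{S_0},\widetilde{S_1},\widetilde{S_2})$ really coincide with those coming from $(S_0,S_1,S_2)$ (which is immediate from $\widetilde{S_i}|_{[2k]}=S_i$), and that no extra compatibility between $f|_{[2k]}$ and the extension is imposed by \ref{cond:P1}--\ref{cond:P2} across the boundary $l=k$, which one sees by inspecting the pairs involved.
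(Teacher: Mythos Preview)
Your proof is correct and follows essentially the same approach as the paper: decompose $f$ into its restriction to $[2k]$ and its extension, observe that the restriction is counted by $\widehat{N}^{(2)}_{S_0,S_1,S_2}(\lambda)$, and count the $2^{n-k}(n-k)!$ extensions by noting that the remaining $n-k$ neighbor-pairs of $2\lambda$ must be bijectively assigned (with an ordering inside each pair) to the index-pairs $\{2l-1,2l\}$. Your explicit verification that \ref{cond:P1} and \ref{cond:P2} on the tail indices are consequences of \ref{cond:P0} is a bit more detailed than the paper's corresponding sentence, but the argument is the same.
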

\begin{proof}
    Let $\widetilde{f} : [2n] \rightarrow 2\lambda$ be a function counted in
 $\widehat{N}^{(2)}_{\widetilde{S_0},\widetilde{S_1},\widetilde{S_2}}(\lambda)$.
 Then it is straightforward to see that its restriction
 $\widetilde{f}\big|_{[2k]}$
 is counted in $\widehat{N}^{(2)}_{S_0,S_1,S_2}(\lambda)$.
 Conversely, in how many ways can we extent an injective function 
 $f : [2k] \hookrightarrow 2\lambda$ counted in 
 $\widehat{N}^{(2)}_{S_0,S_1,S_2}(\lambda)$ into a function
 $\widetilde{f} : [2n] \rightarrow 2\lambda$ counted in
 $\widehat{N}^{(2)}_{\widetilde{S_0},\widetilde{S_1},\widetilde{S_2}}(\lambda)$?
 One has to place the integers from $\{2k+1,\dots,2n\}$ in the $2(n-k)$ boxes
 of the set $2\lambda \setminus f([2k])$ such that numbers $2i-1$ and $2i$
 (for $k<i\leq n$) are in neighboring boxes.
 There are $2^{n-k}(n-k)!$ ways to place these number with this condition.
 If we obey this condition, then $\widetilde{f}$ verifies \ref{cond:P0},
     \ref{cond:P1} and \ref{cond:P2} with respect to 
     $(\widetilde{S_0},\widetilde{S_1},\widetilde{S_2})$.
 Therefore, any function $f$ counted in        
  $\widehat{N}^{(2)}_{S_0,S_1,S_2}(\lambda)$ is obtained as the restriction
  as exactly $2^{n-k}(n-k)!$ functions $\widetilde{f}$ counted in
 $\widehat{N}^{(2)}_{\widetilde{S_0},\widetilde{S_1},\widetilde{S_2}}(\lambda)$.
\end{proof}

With Eq.~\eqref{EqSigmaStilde}, Lemma \ref{LemAnySize1} and
Lemma \ref{LemAnySize2} it follows that the following equation holds true for
any partitions $\lambda$ and $\mu$ with $|\lambda| \geq |\mu|$ 
(notice also that it is also obviously true for $|\lambda| <  |\mu|$):
\begin{equation}
    \Sigma^{(2)}_\mu = \frac{1}{2^{\ell(\mu)}}
          \sum_{\substack{S_0 \text{ pair-partition} \\ \text{of
    } \{1,\dots,2|\mu|\} }} 
    (-1)^{\loops(S_0,S_1)} \widehat{N}^{(2)}_{S_0,S_1,S_2},
    \label{eq:sigma_N_inj}
\end{equation}
where $(S_1,S_2)$ is any couple of pair-partitions of type $\mu$.

   \subsection{Forgetting injectivity}
      \label{subsec:forgetting-injectivity}
In this section we will prove Theorem \ref{theo:ZonalFeraySniady-A2} (and
thus finish the proof of Theorem \ref{theo:ZonalFeraySniady-A1}).
In other terms, we prove that Eq.~\eqref{eq:sigma_N_inj} is still true if we
replace in each term of the sum $\widehat{N}^{(2)}_{S_0,S_1,S_2}$ by
$N^{(2)}_{S_0,S_1,S_2}$. In order to do this we have to check that, for any
\emph{non-injective} function $f : [2|\mu|] \to
2\lambda$, the total contribution
\begin{equation}
    \label{eq:contrib_f}
    \sum_{\substack{S_0 \text{ pair-partition} \\ \text{of }
    [2|\mu|] } }
    (-1)^{\loops(S_0,S_1)}\ [f\text{ fulfills \ref{cond:P0}, \ref{cond:P1} and
    \ref{cond:P2}}]
\end{equation}
of $f$ to the right-hand side of Eq.~\eqref{eq:sigma_N_inj} is equal to zero.

Let us fix a couple $(S_1,S_2)$ of pair-partitions of type $\mu$. 
We begin by a small lemma:
\begin{lemma}
    Let $f : [2k] \to 2\lambda$ be a function with $f(i)=f(j)$ for some $i$
    and $j$. Let us suppose that $f$ fulfills condition \ref{cond:P0} and
    \ref{cond:P1} with respect to some pair-partitions $S_0$ and $S_1$.
    Then, if $i$ and $j$ are the labels of edges in the same loop of
    $\loops(S_0,S_1)$ then there is an even distance between these two edges.
    \label{LemEvenDistance}
\end{lemma}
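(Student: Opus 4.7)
The plan is to track the \emph{column parity} of $f(l)$ (that is, whether the column index of the box $f(l)$ is odd or even) as one walks along the loop in $\loops(S_0,S_1)$ containing the edges labelled $i$ and $j$. The point is that both of the moves used to travel along such a loop, namely $l \mapsto S_0(l)$ and $l \mapsto S_1(l)$, flip this parity, so the parity of $f(l)$ toggles at every step.

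First I would observe, directly from condition \ref{cond:P0}, that $f(l)$ and $f(S_0(l))$ sit in columns $2i-1$ and $2i$ for some $i$, so applying $S_0$ reverses the column parity of $f$. Next I would show the analogous statement for $S_1$, which is slightly less obvious: by \ref{cond:P1}, the boxes $f(l)$ and $f(S_0\circ S_1(l))$ lie in the same column, hence have the same parity; on the other hand, applying \ref{cond:P0} to the element $S_1(l)$ shows that $f(S_1(l))$ and $f(S_0(S_1(l)))$ have opposite parity. Composing these two facts gives that $f(l)$ and $f(S_1(l))$ also have opposite column parity.

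Now recall that in the bipartite edge-labelled graph $\loops(S_0,S_1)$, traversing one edge of the loop corresponds either to moving through a black vertex (that is, replacing the current label $l$ by $S_0(l)$) or through a white vertex (replacing $l$ by $S_1(l)$). By the two observations above, each such step flips the column parity of $f$. Therefore, if $i$ and $j$ lie in the same loop and are at distance $d$ from one another along this loop, the column parities of $f(i)$ and $f(j)$ differ by a factor of $(-1)^d$.

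Finally, the hypothesis $f(i) = f(j)$ forces these two column parities to agree, so $(-1)^d = 1$, i.e.\ $d$ is even. There is no real obstacle here; the only thing to be careful about is the derivation that $S_1$ reverses column parity, since condition \ref{cond:P1} involves the composition $S_0 \circ S_1$ and must be combined with \ref{cond:P0} to isolate the effect of $S_1$ alone.
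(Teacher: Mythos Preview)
Your proposal is correct and follows essentially the same approach as the paper: both track the column parity of $f(l)$ along the loop and use that each step (through $S_0$ or $S_1$) flips it. Your treatment of the $S_1$ case is in fact more explicit than the paper's, which simply asserts that adjacent edges have opposite column parity ``as $f$ fulfills condition \ref{cond:P0} and \ref{cond:P1}'' without spelling out how \ref{cond:P0} and \ref{cond:P1} combine to give the parity flip under $S_1$.
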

\begin{proof}
    If two edges labeled $k$ and $l$ are adjacent, this means that either
    $j=S_0(k)$ or $j=S_1(k)$.
    In both cases, as $f$ fulfills condition \ref{cond:P0} and \ref{cond:P1},
    the indices of the columns containing boxes $f(j)$ and $f(k)$ have
    different parities.
    Hence, the same is true if edges labeled $j$ and $k$ are in an odd distance
    from each other.
    As $f(i)=f(j)$, in particular they are in the same column and thus,
    the edges labeled $i$ and $j$ cannot be in the same loop with an odd
    distance between them.
\end{proof}

\begin{lemma}\label{LemChangingSignInvolution}
Let $f : [2|\mu|] \to 2\lambda$ with $f(i)=f(j)$. Then
\begin{enumerate}[label=\alph*)]
   \item \label{lemma-part-a}
	conditions \ref{cond:P0}, \ref{cond:P1} and \ref{cond:P2} are fulfilled
	for $S_0$ if and only if they are fulfilled for $S'_0 = (i\ j)\cdot
	S_0$;
   \item \label{lemma-part-b} 
	if these conditions are fulfilled, then
        \[(-1)^{\loops(S_0,S_1)} + (-1)^{\loops(S'_0 , S_1)}=0. \]
\end{enumerate}
\end{lemma}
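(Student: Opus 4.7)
The plan is to use $S_0 \leftrightarrow S'_0$ as a sign-reversing involution. The preliminary observation: since $f(i)=f(j)$ and \ref{cond:P0} for $S_0$ at $l=i$ requires $f(i)$ and $f(S_0(i))$ to be neighbors (hence distinct boxes), we must have $\{i,j\}\notin S_0$, so $(i\ j)$ acts non-trivially on $S_0$ and $S'_0\neq S_0$. Moreover, the box $f(i)=f(j)$ has a unique neighbor in $2\lambda$, so applying \ref{cond:P0} for $S_0$ at both $l=i$ and $l=j$ yields the identity
\[
f(S_0(i)) = f(S_0(j)),
\]
which will be crucial for part \ref{lemma-part-a}.

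For part \ref{lemma-part-a}, \ref{cond:P2} does not involve $S_0$ and is automatic. The basic identity $f \circ S'_0 = f \circ S_0 \circ (i\ j)$, which follows from $S'_0 = (i\ j) S_0 (i\ j)$ and $f\circ(i\ j) = f$ (itself a consequence of $f(i)=f(j)$), combined once more with $f = f\circ(i\ j)$, shows immediately that \ref{cond:P0} for $S'_0$ at $l$ is equivalent to \ref{cond:P0} for $S_0$ at $(i\ j)(l)$, so \ref{cond:P0} transfers between $S_0$ and $S'_0$. For \ref{cond:P1} the same manipulation handles every $l$ with $S_1(l)\notin\{i,j\}$; the remaining cases $l\in\{S_1(i), S_1(j)\}$ are precisely where the identity $f(S_0(i))=f(S_0(j))$ above is used, together with \ref{cond:P1} for $S_0$ applied at $l$, to finish the equivalence.

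For part \ref{lemma-part-b}, I will analyze how the loop structure of $\loops(S_0, S_1)$ changes under $S_0\to S'_0$. Setting $a=S_0(i)$ and $b=S_0(j)$, the operation replaces the $S_0$-pairs $\{i,a\},\{j,b\}$ by the $S'_0$-pairs $\{i,b\},\{j,a\}$, which corresponds on the graph $\loops(S_0,S_1)$ to swapping the black endpoints of the edges labeled $i$ and $j$. Two cases then arise. If $i$ and $j$ lie in different loops of $\loops(S_0,S_1)$, a direct trace of the alternating walk shows that these two loops merge into a single loop in $\loops(S'_0,S_1)$, decreasing $|\loops|$ by one. If instead $i$ and $j$ lie in the same loop, Lemma \ref{LemEvenDistance} forces them to be at an even distance; writing the loop cyclically as $e_1=i,e_2,\dots,e_k=j,\dots,e_{2\ell}$ with $k$ odd, the same tracing argument splits the loop into two loops of (even) lengths $k-1$ and $2\ell-k+1$, increasing $|\loops|$ by one. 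In both cases $|\loops(S'_0,S_1)|$ and $|\loops(S_0,S_1)|$ differ by exactly one, so their signs $(-1)^{n-|\loops|}$ are opposite, which is precisely the claimed identity.

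The main obstacle lies in part \ref{lemma-part-b}: were $i$ and $j$ allowed to sit at odd distance in a common loop, a direct calculation shows that the swap would preserve a single loop of the same length $2\ell$ rather than split it, and the sign would fail to flip. Lemma \ref{LemEvenDistance}, which uses essentially both \ref{cond:P0} and \ref{cond:P1}, is precisely the tool that rules out this configuration and thereby makes the involution sign-reversing.
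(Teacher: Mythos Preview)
Your proof is correct and follows essentially the same approach as the paper: part \ref{lemma-part-b} is handled identically, via the black-endpoint swap on edges $i$ and $j$, the merge/split case analysis, and the appeal to Lemma \ref{LemEvenDistance} to exclude the odd-distance case. For part \ref{lemma-part-a} you actually give more detail than the paper (which simply declares it ``obvious from the definitions''); your use of the identities $f\circ(i\ j)=f$ and $f(S_0(i))=f(S_0(j))$ to handle the exceptional cases $l\in\{S_1(i),S_1(j)\}$ of \ref{cond:P1} is exactly what is needed to make that ``obvious'' step rigorous.
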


\begin{proof}
    Recall that $S'_0$ is exactly the same pairing as $S_0$ except that
    $i$ and $j$ have been interchanged. Thus the part \ref{lemma-part-a} is
obvious
    from the definitions.

    Besides, the graph $\loops(S'_0,S_1)$ is obtained from $\loops(S_0,S_1)$
    by taking the edges with labels $i$ and $j$ and interchanging their black
    extremities. We consider two different cases.
    \begin{itemize}
     \item If $i$ and $j$ are in different loops $L_i$ and $L_j$ of the graph
         $\loops(S_0,S_1)$,
         then, when we erase the edges $i$ and $j$ we still have the same 
         connected components.
         To obtain $\loops(S'_0,S_1)$, one has to draw an edge between the 
         white extremity of $j$ and the black extremity of $i$.
         These two vertices were in different connected components $L_i$
         and $L_j$ of $\loops(S_0,S_1)$, therefore these two components are
         now connected and we have one less connected component.
         We also have to add another edge between the black extremity of $j$
         and the white extremity of $j$ but they are now in the same connected
         component so this last operation does not change the number
         of connected components.

         Finally, the graph $\loops(S'_0,S_1)$ has one less connected component
         than $\loops(S_0,S_1)$ and the part \ref{lemma-part-b} of the lemma
         is true in this case.

         This case is illustrated on Figure \ref{FigJoinLoops}.

  \begin{figure}[tbp]
\[\begin{array}{c}
\includegraphics[width=5cm]{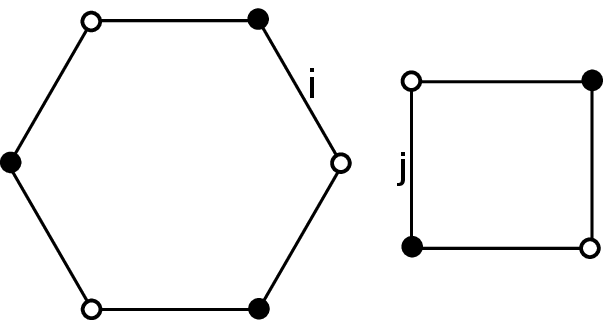}
\end{array}
\rightarrow
\begin{array}{c}
 \includegraphics[width=5cm]{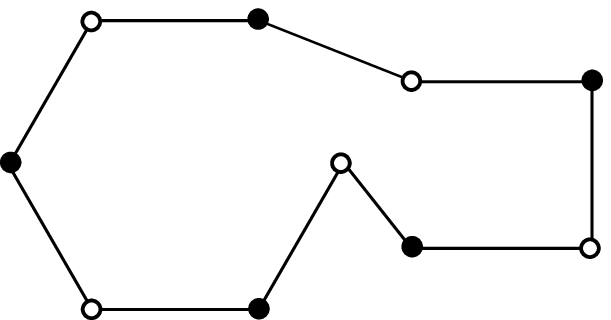}
\end{array}
\]
\caption{$\loops(S_0,S_1)$ and $\loops(S'_0,S_1)$ in the first case of proof
of Lemma \ref{LemChangingSignInvolution}.}
\label{FigJoinLoops}
\end{figure}

    \item Otherwise $i$ and $j$ are in the same loop $L$ of the graph
        $\loops(S_0,S_1)$.
        When we erase the edges $i$ and $j$ in this graph, the loop $L$ is
	split into two components $L_1$ and $L_2$.
        Let us say that $L_1$ contains the black extremity of $i$.
        By Lemma \ref{LemEvenDistance}, there is an even distance between $i$
        and $j$.
        This implies that the white extremity of $j$ is also in $L_1$, while
        its black extremity and the white extremity of $i$ are both in $L_2$.
        Therefore, when we add edges to obtain $\loops(S'_0,S_1)$, we do
        not change the number of connected components.

        Finally, the graph $\loops(S'_0,S_1)$ has one more connected component
        than $\loops(S_0,S_1)$ and the part \ref{lemma-part-b} of the lemma is
        also true in this case.
 
         This case is illustrated on Figure \ref{FigCutLoop}. \qedhere
\end{itemize}
\end{proof}

        \begin{figure}[tbp]
\[\begin{array}{c}
\includegraphics[width=3.5cm]{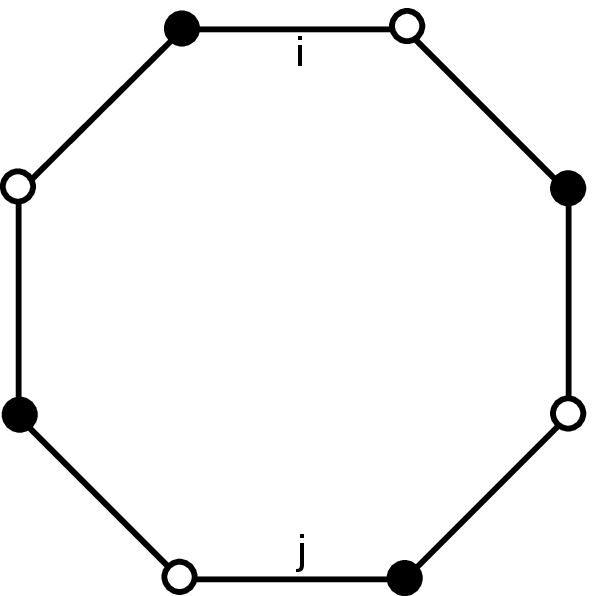}
\end{array}
\rightarrow
\begin{array}{c}
 \includegraphics[width=3.5cm]{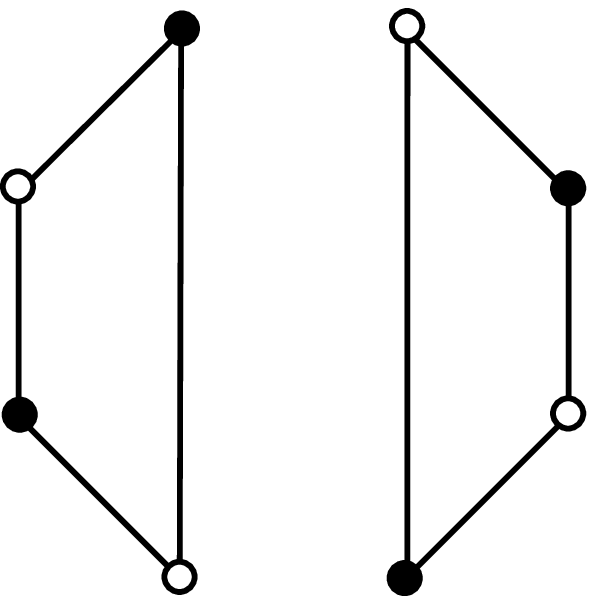}
\end{array}
\]
\caption{$\loops(S_0,S_1)$ and $\loops(S'_0,S_1)$ in the first case of proof
of Lemma \ref{LemChangingSignInvolution}.}
\label{FigCutLoop}
\end{figure}

From the discussion above it is clear that the lemma allows us to group
the terms in \eqref{eq:contrib_f} into canceling pairs.
Thus \eqref{eq:contrib_f} is equal to $0$ for any non-injective function $f$,
which implies that
\begin{multline*}
    \frac{1}{2^{\ell(\mu)}}
          \sum_{\substack{S_0 \text{ pair-partition} \\ \text{of
              } \{1,\dots,2|\mu|\} }} 
                  (-1)^{\loops(S_0,S_1)} \widehat{N}^{(2)}_{S_0,S_1,S_2} \\
 = \frac{1}{2^{\ell(\mu)}}
           \sum_{\substack{S_0 \text{ pair-partition} \\ \text{of
               } \{1,\dots,2|\mu|\} }} 
                   (-1)^{\loops(S_0,S_1)} {N}^{(2)}_{S_0,S_1,S_2}.
               \end{multline*}
Using Eq.~\eqref{eq:sigma_N_inj}, this proves
Theorem \ref{theo:ZonalFeraySniady-A2},
which is equivalent to Theorem \ref{theo:ZonalFeraySniady-A1}.

\subsection{Number of functions and Stanley's coordinates}
\label{subsec:N-in-terms}
In this paragraph we express the $N$ functions in terms of Stanley's coordinates
$\pp$ and $\qq$. This is quite easy and shows the equivalence between Theorems
\ref{theo:ZonalFeraySniady-A1} and \ref{theo:ZonalFeraySniady-B}.


\begin{lemma}\label{lem:N1_pq}
    Let $(S_0,S_1,S_2)$ be a triplet of pair-partitions.
    We will view the graphs $\loops(S_0,S_1)$ and $\loops(S_0,S_2)$ as the sets
    of their connected components. One has:
    \[N^{(1)}_{S_0,S_1,S_2}(\pp \times \qq) =
    \sum_{\varphi : \loops(S_0,S_2) \to \N^\star}
        \prod_{\ell \in \loops(S_0,S_2)} p_{\varphi(\ell)} 
        \prod_{m \in \loops(S_0,S_1)} q_{\psi(m)},\]
    where $\psi(m)=\max_\ell \varphi(\ell)$, with $\ell$ running over loops in
    $\loops(S_0,S_2)$, which have an edge with the same label as some edge of
    $m$.
\end{lemma}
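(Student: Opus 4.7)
The plan is to parameterize functions $f$ counted by $N^{(1)}_{S_0,S_1,S_2}(\pp \times \qq)$ by a triple of choices adapted to the loop structure, and then evaluate the cardinality of the parameter space.

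First I would unpack the combinatorial consequences of \ref{cond:Q0}, \ref{cond:Q1}, \ref{cond:Q2}. Condition \ref{cond:Q0} says that $f$ descends to a function on the pairs of $S_0$, which are precisely the black vertices of both $\loops(S_0,S_1)$ and $\loops(S_0,S_2)$. Following an edge $l$ to $S_0(l)$ preserves the value of $f$ by \ref{cond:Q0}, and then following $S_2$ (resp.~$S_1$) moves $f$ within the same row (resp.~column) by \ref{cond:Q2} (resp.~\ref{cond:Q1}). Walking along a loop of $\loops(S_0,S_2)$ alternates $S_0$- and $S_2$-steps, so all edges in such a loop are mapped by $f$ to boxes in a single row; symmetrically, all edges in a loop of $\loops(S_0,S_1)$ are mapped to boxes in a single column.

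The second step is to set up the bijection. A function $f$ satisfying \ref{cond:Q0}--\ref{cond:Q2} is determined by, and equivalent to, the data of:
\begin{itemize}
\item for each $\ell \in \loops(S_0,S_2)$, a row of $\pp \times \qq$, which is encoded by a pair $(\varphi(\ell),a(\ell))$ with $\varphi(\ell) \in \N^\star$ the block index and $a(\ell) \in [p_{\varphi(\ell)}]$ the sub-index inside the block of length $q_{\varphi(\ell)}$;
\item for each $m \in \loops(S_0,S_1)$, a column index $b(m) \in \N^\star$;
\end{itemize}
subject to the constraint that for every black vertex $P$ lying on both $\ell$ and $m$, the box at row $(\varphi(\ell),a(\ell))$ and column $b(m)$ actually exists in $\pp\times\qq$, i.e.~$b(m) \leq q_{\varphi(\ell)}$. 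Here I would note that a black vertex $P = \{i,S_0(i)\}$ lies on both $\ell$ and $m$ if and only if $\ell$ and $m$ share an edge label (namely $i$ or $S_0(i)$), which is exactly the compatibility used in the definition of $\psi$.

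The third step is the counting. For a fixed $\varphi$, the sub-indices $a(\ell)$ are free, contributing $\prod_{\ell} p_{\varphi(\ell)}$; and the column choice $b(m)$ must satisfy $b(m) \leq q_{\varphi(\ell)}$ for every $\ell$ sharing a black vertex with $m$. Using that $\qq$ is non-increasing, this is equivalent to $b(m) \leq \min_\ell q_{\varphi(\ell)} = q_{\max_\ell \varphi(\ell)} = q_{\psi(m)}$, giving exactly $q_{\psi(m)}$ choices for each $m$. Summing the product $\prod_\ell p_{\varphi(\ell)} \cdot \prod_m q_{\psi(m)}$ over all $\varphi : \loops(S_0,S_2) \to \N^\star$ yields the claimed formula.

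There is no real obstacle here; the only subtle point is the conversion of the constraint $b(m) \le q_{\varphi(\ell)}$ for all compatible $\ell$ into the single inequality $b(m) \le q_{\psi(m)}$, which crucially uses that $\qq$ is non-increasing and is the reason a $\max$ (rather than, say, a $\min$) appears in the definition of $\psi$.
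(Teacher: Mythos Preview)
Your proof is correct and follows essentially the same approach as the paper's own proof: parameterize admissible $f$ by a row index (equivalently a block index $\varphi(\ell)$ plus a sub-index) for each loop of $\loops(S_0,S_2)$ and a column index for each loop of $\loops(S_0,S_1)$, then count. If anything, you are slightly more explicit than the paper about the converse direction, namely that the constraint $b(m)\le q_{\varphi(\ell)}$ must hold for \emph{every} $\ell$ meeting $m$ and that monotonicity of $\qq$ collapses this family of inequalities to the single bound $b(m)\le q_{\psi(m)}$; the paper checks only the binding inequality and leaves the rest implicit.
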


\begin{proof}
    Fix a triplet $(S_0,S_1,S_2)$ of pair-partitions and sequences $\pp$
    and $\qq$.
    We set $\lambda=\pp \times \qq$ as in Section \ref{SubsectZonalCharPQ}.
    Let $g :  [2k] \to \lambda$ be a function verifying conditions 
    \ref{cond:Q0}, \ref{cond:Q1} and \ref{cond:Q2}.
    As $g$ fulfills \ref{cond:Q0} and \ref{cond:Q2},
    all elements $i$ in a given loop $\ell \in \loops(S_0,S_2)$
    have their image by $g$ in the same row $r_\ell$.
    We define $\varphi(\ell)$ as the integer $i$ such that
    \begin{equation}\label{IneqRB}
        p_1 + \cdots + p_{i-1} < r_\ell \leq p_1 + \cdots + p_i.
    \end{equation}
    This associates to $g$ a function $\varphi : \loops(S_0,S_2) \to \N^\star$.
    
    Let us fix a function $\varphi : \loops(S_0,S_2) \to \N^\star$.
    We want to find its pre-images $g: [2k] \to \lambda$. We have the following
    choices to make:
    \begin{itemize}
        \item we have to choose, for each loop $\ell \in \loops(S_0,S_2)$,
            the value of $r_\ell$.
            Due to inequality \eqref{IneqRB},
            one has $p_{\varphi(\ell)}$ choices for
            each loop $\ell$;
        \item then we have to choose, for each loop $m \in \loops(S_0,S_1)$,
            the value of $c_m$, the index of the common column of the images by
            $g$ of elements in $m$ (as we want $g$ to fulfill conditions
            \ref{cond:Q0} and \ref{cond:Q1}, all images of elements in $m$
            must be in the same column).
            By definition of $\psi(m)$, there is an integer $i \in m$, which
            belongs to a loop $\ell \in \loops(S_0,S_2)$ with 
            $\varphi(\ell)=\psi_m$.
            The image of $i$ by $g$ is the box $(r_\ell,c_m)$.
            As the $r_\ell$-th row of the diagram $\lambda$ has
            $q_{\varphi(\ell)}$ boxes, one has
            \begin{equation}\label{IneqCM}
                c_m \leq q_{\varphi(\ell)}.
            \end{equation}
            Finally, for each loop $m \in \loops(S_0,S_1)$,
            one has $q_{\psi(m)}$ possible values of $c_m$.
        \item A function $g :  [2k] \to \lambda$ verifying \ref{cond:Q0},
            \ref{cond:Q1} and \ref{cond:Q2} is uniquely determined
            by the two collections of numbers
            $(c_m)_{m \in \loops(S_0,S_1)}$ and 
            $(r_\ell)_{\ell \in \loops(S_0,S_2)}$.
            Indeed, if $i\in[2k]$, its image by $g$
            is the box $(r_\ell,c_m)$, where $m$ and $\ell$ are the loops of
            $\loops(S_0,S_1)$ and $\loops(S_0,S_2)$ containing $i$.
   \end{itemize}
   Conversely, if we choose two sequences of numbers
   $(c_m)_{m \in \loops(S_0,S_1)}$ and $(r_\ell)_{\ell \in \loops(S_0,S_2)}$
   fulfilling inequalities \eqref{IneqRB} and \eqref{IneqCM}, this defines a
   unique function $g$ fulfilling \ref{cond:Q0}, \ref{cond:Q1} and \ref{cond:Q2}
   associated to $\varphi$.
   It follows that each function $\varphi : \loops(S_0,S_2) \to \N^\star$ has
   exactly
   \[\prod_{\ell \in \loops(S_0,S_2)} p_{\varphi(\ell)} 
           \prod_{m \in \loops(S_0,S_1)} q_{\psi(m)},\]
   pre-images and the lemma holds.
\end{proof}

The above lemma shows that Theorem \ref{theo:ZonalFeraySniady-A1}
implies Theorem \ref{theo:ZonalFeraySniady-B}.

\begin{proof}[Proof of Theorem \ref{theo:ZonalFeraySniady-B}]
It is a direct application of Theorem \ref{theo:ZonalFeraySniady-A1} and of
the expression of $N^{(1)}_{S_0,S_1,S_2}$ in terms of Stanley's coordinates
that we establish in Lemma \ref{lem:N1_pq}.
\end{proof}

\subsection{Action of the axial symmetry group}
\label{subsec:integers_in_Lassalle}
\label{SubsectStanleyLassalle2}

The purpose of this paragraph is to prove
Proposition~\ref{PropStanleyLassalle2}. 

Theorem \ref{theo:ZonalFeraySniady-B} implies that the coefficients of
$(-1)^k \Sigma^{(2)}_\mu(\pp,-\qq)$ are non-negative.
But it is not obvious from this formula that the coefficients are integers.
We will prove it in this paragraph by grouping some identical terms in 
Theorem~\ref{theo:ZonalFeraySniady-A1} before applying Lemma \ref{lem:N1_pq}.

The following lemma will be useful to find some identical terms.
\begin{lemma}
    Let $(S_0,S_1,S_2)$ be a triplet of pair-partitions of $[2k]$ and
    $\sigma$ be a permutation in $\Sym{2k}$. Then
    \[N^{(1)}_{(\sigma \cdot S_0, \sigma \cdot S_1, \sigma \cdot S_2)}=
    N^{(1)}_{(S_0,S_1,S_2)}.\]
    \label{LemInvN}
\end{lemma}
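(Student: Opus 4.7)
The plan is to establish a bijection between the set of functions counted by $N^{(1)}_{(S_0,S_1,S_2)}(\lambda)$ and the set counted by $N^{(1)}_{(\sigma\cdot S_0,\sigma\cdot S_1,\sigma\cdot S_2)}(\lambda)$, for each fixed Young diagram $\lambda$. The natural candidate is precomposition with $\sigma^{-1}$: to a function $f : [2k]\to\lambda$ satisfying \ref{cond:Q0}, \ref{cond:Q1}, \ref{cond:Q2} with respect to $(S_0,S_1,S_2)$, associate $g := f\circ\sigma^{-1}$, and check that $g$ satisfies the three conditions with respect to $(\sigma\cdot S_0,\sigma\cdot S_1,\sigma\cdot S_2)$. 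The inverse map is $g\mapsto g\circ\sigma$, so bijectivity is free once the compatibility with the three conditions is verified.

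The first step is to recall the action of $\Sym{2k}$ on pair-partitions, as written in terms of the associated involutions: if $S_i$ is viewed as a fixed-point-free involution of $[2k]$, then $\sigma\cdot S_i$ is the involution $\sigma\circ S_i\circ\sigma^{-1}$. In particular, for every $l\in[2k]$,
\[
(\sigma\cdot S_i)(l) \;=\; \sigma\bigl(S_i(\sigma^{-1}(l))\bigr).
\]
With this identity, each of the three conditions for $g$ reduces, after substituting $l' := \sigma^{-1}(l)$, to the corresponding condition for $f$ at $l'$. For instance, $g(l)=g((\sigma\cdot S_0)(l))$ becomes $f(l')=f(S_0(l'))$, which is exactly \ref{cond:Q0} for $f$; the arguments for \ref{cond:Q1} and \ref{cond:Q2} are identical since ``same row'' and ``same column'' depend only on the boxes $f(l')$ and $f(S_i(l'))$ and not on the labels $l$.

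The second step is just to observe that the assignment $f\mapsto f\circ\sigma^{-1}$ is a bijection between the set of all functions $[2k]\to\lambda$ and itself (with inverse $g\mapsto g\circ\sigma$), and that by the previous step it restricts to a bijection between the two sets being counted. Summing over $\lambda$-indexed sets, this yields the claimed equality of functions on Young diagrams.

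There is essentially no obstacle here: the entire content is the compatibility of the diagonal $\Sym{2k}$-action on triples of pair-partitions with the action on functions by precomposition. The only point deserving a line of care is that ``same row'' and ``same column'' are properties of pairs of boxes, so relabeling the source $[2k]$ by $\sigma$ cannot affect whether these properties hold; this is exactly what makes the computation transparent.
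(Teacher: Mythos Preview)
Your proof is correct and is essentially the same as the paper's: the paper observes in one line that $f$ satisfies \ref{cond:Q0}, \ref{cond:Q1}, \ref{cond:Q2} with respect to $(\sigma\cdot S_0,\sigma\cdot S_1,\sigma\cdot S_2)$ if and only if $f\circ\sigma$ satisfies them with respect to $(S_0,S_1,S_2)$, which is exactly your bijection $g\mapsto g\circ\sigma$ viewed from the other side.
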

\begin{proof}
    Map $f : [2k] \to 2\lambda$ satisfies conditions \ref{cond:Q0},
    \ref{cond:Q1} and \ref{cond:Q2} with respect to
    $(\sigma \cdot S_0, \sigma \cdot S_1, \sigma \cdot S_2)$
    if and only if
    $f \circ \sigma$ satisfies conditions \ref{cond:Q0}, \ref{cond:Q1} and
    \ref{cond:Q2} with respect to $(S_0,S_1,S_2)$.
\end{proof}

From now on, we fix a partition $\mu$ of $k$ and a couple $(S_1,S_2)$ of
pair-partitions of $[2k]$ of type $\mu$.

Choose arbitrarily an edge $j_{i,1}$ in each loop $L_i$ (which is of length $2\mu_i$).
Denote $j_{i,2}=S_2(j_{i,1})$, $j_{i,3}=S_1(j_{i,2})$ and so on until
$j_{i,2\mu_i}=S_2(j_{i,2\mu_i-1})$, which fulfills $S_1(j_{i,2\mu_i})= j_{i,1}$.
We consider the permutation $r_i$ in $\Sym{2k}$ which sends $j_{i,m}$ to
$j_{i,2\mu_i+1-m}$ for any $m \in [2\mu_i]$ and fixes all other integers.
Geometrically, the sequence $(j_{i,m})_{m \in [2\mu_i]}$  is obtained by
reading the labels of the edges along the loop $L_i$
and $r_i$ is an axial symmetry of the loop $L_i$.
\begin{itemize}
    \item $r_i$ permutes the black vertices of the graph $\loops(S_1,S_2)$ 
        (it is an axial symmetry of $L_i$ and fixes the elements
        of the other connected components). It means that $r_i\cdot S_1=S_1$.
        
        In the same way, it permutes the white vertices therefore
        $r_i\cdot S_2 =S_2$.

    \item Permutations $r_i$ are of order $2$ and they clearly commute with each
	other (their supports are pairwise disjoint);
        therefore, they generate a subgroup $G$ of order $2^{\ell(\mu)}$ of
        $\Sym{2|\mu|}$. Moreover, for a fixed integer $j$, the orbit
        $\{g(j): g \in G\}$ contains exactly two elements: $j$ and $r_i(j)$,
        where $i$ is the index of the loop of $\loops(S_1,S_2)$ containing $i$.
\end{itemize}

Using Lemma \ref{LemInvN}, for any pair-partition $S_0$, one has
\[N^{(1)}_{g \cdot S_0,S_1,S_2} = N^{(1)}_{g \cdot S_0,g \cdot S_1,g \cdot S_2}
= N^{(1)}_{S_0,S_1,S_2},\]
where $g$ is equal to any one of the $r_i$.
It immediately extends to any $g$ in $G$.
In the same way, we have 
\[(-1)^{\loops(g \cdot S_0,S_1)} =(-1)^{\loops(g \cdot S_0,g \cdot S_1)}
=(-1)^{\loops(S_0,S_1)}. \]
Therefore Theorem~\ref{theo:ZonalFeraySniady-A1} can be restated as:
\begin{equation}\label{ZonalCharOrbits}
\Sigma^{(2)}_\mu = \sum_{\Omega\text{ orbits} \atop \text{under }G}
(-1)^{\loops(S_0(\Omega),S_1)}
\frac{2^{|\loops(S_0(\Omega),S_1)|}}{2^{\ell(\mu)}}
|\Omega|\ N^{(1)}_{S_0(\Omega),S_1,S_2},
\end{equation}
where the sum runs over the orbits $\Omega$ of the set of all pair-partitions
of $[2k]$ under the action of $G$ and
where $S_0(\Omega)$ is any element of the orbit $\Omega$.

\begin{lemma}
    For each orbit $\Omega$ of the set of pair-partitions of $[2k]$ under the
    action of $G$, the quantity
    \[ \frac{2^{|\loops(S_0(\Omega),S_1)|}}{2^{\ell(\mu)}} |\Omega|\]
    is an integer.
    \label{LemInteger}
\end{lemma}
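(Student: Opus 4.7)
The plan is to establish the equivalent inequality $|\Stab_G(S_0)| \leq 2^{|\loops(S_0,S_1)|}$: combined with the fact that $|\Stab_G(S_0)|$ is itself a power of $2$ (being a subgroup of the elementary abelian $2$-group $G$), this immediately yields the desired integrality, since by orbit--stabilizer $|\Omega| = 2^{\ell(\mu)}/|\Stab_G(S_0)|$, so the quantity in the statement is exactly $2^{|\loops(S_0,S_1)|}/|\Stab_G(S_0)|$.

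To this end I will construct an injective group homomorphism $\Phi \colon \Stab_G(S_0) \hookrightarrow \mathbb{F}_2^{|\loops(S_0,S_1)|}$. The key observation is that $G$ acts on $[2k]$ with every orbit of cardinality exactly $2$: given an edge $e \in [2k]$ belonging to the loop $L_i$ of $\loops(S_1,S_2)$, the $G$-orbit of $e$ is $\{e, r_i(e)\}$, since $r_j$ fixes $e$ for every $j \neq i$ while $r_i$ has no fixed points on $L_i$. Consequently, for each $e \in [2k]$ the function $\psi_e \colon G \to \mathbb{F}_2$ defined by $\psi_e(g) = 0$ if $g(e)=e$ and $\psi_e(g) = 1$ otherwise coincides with the $i$-th coordinate projection under the identification $G \cong \mathbb{F}_2^{\ell(\mu)}$, and in particular is a group homomorphism. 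Fixing a reference edge $e_M$ in each loop $M$ of $\loops(S_0,S_1)$, I then set $\Phi(g) := \bigl(\psi_{e_M}(g)\bigr)_{M}$, which is a homomorphism as a product of homomorphisms.

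To prove $\Phi$ is injective it remains to check that its kernel is trivial. Suppose $g \in \Stab_G(S_0)$ fixes $e_M$ for every loop $M$. Since $g$ commutes with the involution $S_0$ (by the stabilizer condition) and with $S_1$ (since $g \in G$), the edges $S_0(e_M)$ and $S_1(e_M)$, which are adjacent to $e_M$ in the cycle $M$, are also fixed by $g$; iterating this alternation along $M$ shows that $g$ fixes every edge of $M$. Since every element of $[2k]$ lies in some loop of $\loops(S_0,S_1)$, we conclude $g = \id$. The main subtlety is the verification that $\psi_e$ is a group homomorphism, which relies entirely on the two-element nature of each $G$-orbit: any two elements of $G$ that both move $e$ must send it to the same point $r_i(e)$, so their product fixes $e$; once this is noted the remainder of the argument is essentially formal.
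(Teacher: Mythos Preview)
Your proof is correct and follows essentially the same approach as the paper: reduce via orbit--stabilizer to bounding $|\Stab_G(S_0)|$ by $2^{|\loops(S_0,S_1)|}$, and obtain this bound by observing that an element of the stabilizer commutes with both $S_0$ and $S_1$, hence is determined by its effect on one reference element per loop of $\loops(S_0,S_1)$, each such element having a $G$-orbit of size~$2$. Your packaging as an injective homomorphism into $\mathbb{F}_2^{|\loops(S_0,S_1)|}$ is slightly more structured than the paper's bare cardinality bound (and in fact makes the separate ``power of $2$'' remark redundant via Lagrange), but the content is the same.
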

This lemma and Eq.~\eqref{ZonalCharOrbits} imply 
Proposition~\ref{PropStanleyLassalle2} (because the $N$ functions are 
polynomials with integer coefficients in variables $\pp$ and $\qq$,
see Lemma \ref{lem:N1_pq}).

\begin{proof}
Let us fix an element $S_0=S_0(\Omega)$ in the orbit $\Omega$.
The quotient $\frac{2^{\ell(\mu)}}{|\Omega|}$ is the cardinality of the
stabilizer $\Stab(S_0) \subset G$ of $S_0$. Therefore it divides the cardinality
of $G$, which is $2^{\ell(\mu)}$, and, hence is a power of $2$.
Besides, any permutation $\pi\in\Stab(S_0) \subset G$ leaves $S_0$ and $S_1$
invariant hence $\pi$ is entirely determined by the its values on 
$\{e_L: L \in \loops(S_0,S_1)\}$, where $e_L$ is an arbitrary element in the loop $L$
(the argument is the same as in the proof of Lemma \ref{LemCountCouples}).
As each integer, and in particular each $e_L$, has only two possible images
by the elements of $G$, this implies that the cardinality of $\Stab(S_0)$ is
smaller or equal to $2^{|\loops(S_0,S_1)|}$.
But it is power of $2$ so $|\Stab(S_0)|=\frac{2^{\ell(\mu)}}{|\Omega|}$
divides $2^{|\loops(S_0,S_1)|}$.
\end{proof}

We will give now an alternative way to end the proof,
which is less natural but more meaningful from the combinatorial point of view.
As before, the partition $\mu \vdash k$ is fixed,
as well as a couple $(S_1,S_2)$ of pair-partitions of $[2k]$ of type $\mu$.
We call an orientation $\orient$ of the elements in $[2k]$, the choice,
for each number in $[2k]$, of a color (red or green).

If $S_0$ is a pair-partition, we say that an orientation $\orient$
is compatible with the loops $\loops(S_0,S_1)$ if
each pair of $S_0$ and each pair of $S_1$ contains one red and one green element.
We denote by $\pairings^o$ the set of couples $(S_0,\orient)$
such that $\orient$ is compatible with $\loops(S_0,S_1)$.

In such an orientation, the color of an element $e_L$ in a loop $L \in \loops(S_0,S_1)$
determines the colors of all elements in this loop.
Nevertheless, the colors of the $\{e_L, L \in \loops(S_1,S_2)\}$, where
$e_L$ is an arbitrary element of $L$, can be chosen idependently.
Therefore, for a given pair-partition $S_0$,
there are exactly $2^{|\loops(S_0,S_1)|}$ orientations compatible with $\loops(S_0,S_1)$.
Hence, Theorem~\ref{theo:ZonalFeraySniady-A1} can be rewritten as:
\begin{equation}\label{EqOrientations}
        \Sigma^{(2)}_\mu =\frac{1}{2^{\ell(\mu)}}
                \sum_{(S_0,\orient)} (-1)^{\loops(S_0,S_1)}\
                   N^{(1)}_{S_0,S_1,S_2},
\end{equation}
where the sum runs over $\pairings^o$.

Of course, the group $\Sym{2k}$, and hence its subgroup $G$,
acts on the set of orientations of $[2k]$.
By definition, if $\orient$ is an orientation and $\sigma$ a permutation,
the color given to $\sigma(i)$ in the orientation $\sigma \cdot \orient$
is the color given to $i$ in $\orient$.

We will consider the diagonal action of $G$ on couples $(S_0,\orient)$.
It is immediate that this action preserves $\pairings^o$.

\begin{lemma}
    The diagonal action of $G$ on $\pairings^o$ 
    is faithful.
\end{lemma}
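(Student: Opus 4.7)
The plan is to show that no nontrivial element of $G$ fixes every couple in $\pairings^o$. Any element of $G$ can be written uniquely in the form $g = \prod_{i \in I} r_i$ for some subset $I$ of the loop indices of $\loops(S_1,S_2)$, and $g$ is nontrivial exactly when $I$ is nonempty. For each such $g$ I will exhibit an explicit $(S_0,\orient)\in \pairings^o$ with $g\cdot(S_0,\orient)\ne (S_0,\orient)$.

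The key simplification will be to take $S_0 := S_1$. Since every $r_i$ fixes $S_1$, we get $g\cdot S_0 = S_0$ automatically, and the compatibility condition defining $\pairings^o$ collapses to the single requirement that every pair of $S_1$ receives one red and one green element (the condition coming from $S_0$ and the one coming from $S_1$ become identical).

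I then build the orientation explicitly using the labelings $j_{i,1},\ldots,j_{i,2\mu_i}$ of the loops $L_i$ introduced just above: color $j_{i,m}$ red when $m$ is odd and green when $m$ is even. The pairs of $S_1$ inside $L_i$ are the $\{j_{i,2m},j_{i,2m+1}\}$ (with indices read cyclically mod $2\mu_i$), each mixing one even and one odd index, so $(S_0,\orient)$ indeed lies in $\pairings^o$. Now $r_i$ sends $j_{i,m}$ to $j_{i,2\mu_i+1-m}$, whose index has the opposite parity, hence $r_i$ swaps red and green on every label of $L_i$ and leaves all other labels untouched. Because the loops $L_i$ for distinct $i$ have disjoint label sets, $g\cdot\orient$ differs from $\orient$ on precisely $\bigsqcup_{i\in I}L_i$, which is nonempty; therefore $g\cdot(S_0,\orient)\ne (S_0,\orient)$, completing the proof.

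There is no real obstacle beyond this construction: the only point worth checking carefully is that the formula $m\mapsto 2\mu_i+1-m$ has no fixed point among the integers (the equation $m=\mu_i+\tfrac12$ has no integer solution), so the parity-flip argument is uniform in $\mu_i$, including the degenerate case $\mu_i=1$.
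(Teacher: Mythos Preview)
Your argument is correct as a proof of \emph{faithfulness} in the standard sense: for every nontrivial $g\in G$ you exhibit a concrete witness $(S_0,\orient)=(S_1,\orient)\in\pairings^o$ that $g$ moves. The parity construction is clean and the check that $m\mapsto 2\mu_i+1-m$ always flips parity is valid.

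However, the paper's own proof establishes something strictly stronger: it shows that the action is \emph{free}, i.e.\ that \emph{every} $(S_0,\orient)\in\pairings^o$ has trivial stabiliser. The paper takes an arbitrary $(S_0,\orient)$ fixed by $g$, observes that the pair $\{j_{i,1},j_{i,2\mu_i}\}\in S_1$ must carry two different colours in any orientation compatible with $\loops(S_0,S_1)$, and concludes that $g$ cannot act as $r_i$ on any loop $L_i$. It is precisely freeness (not mere faithfulness) that is used immediately afterwards, when the paper groups the $2^{\ell(\mu)}$ elements of each orbit together to cancel the prefactor $1/2^{\ell(\mu)}$ in Eq.~\eqref{EqOrientations}. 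Your witness-based approach cannot supply this: producing one moved point for each $g$ does not rule out that $g$ fixes some other $(S_0,\orient)$, and then orbits could be smaller than $|G|$. So while you have proved the lemma exactly as worded, you should be aware that the word ``faithful'' in the statement is a slight understatement, and your method does not reach the result the paper actually needs.
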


\begin{proof}
    Let us suppose that $g \cdot (S_0,\orient)=(S_0,\orient)$.
    We use the definition of the integers $j_{i,m}$ given at
    the beginning of the paragraph to define the group $G$.
    Recall that $S_1$ contains, for each $i$,
    the pair $\{j_{i,1},j_{i,2\mu_i}\}$.
    Hence, as $\orient$ is compatible with $\loops(S_0,S_1)$,
    the integers $j_{i,1}$ and $j_{i,2\mu_i}$ have different colors in $\orient$.
    But $\orient$ is fixed by $g$, so $g(j_{i,1})$ cannot be equal
    to $j_{i,2\mu_i}$.
    This means that $g$ does not act like the mirror symmetry $r_i$
    on the loop $L_i$; hence $g$ acts on the loop $L_i$ like the identity.
    As this is true for all loops in $\loops(S_1,S_2)$,
    the permutation $g$ is equal to the identity.
\end{proof}

Finally, as $N^{(1)}_{g \cdot S_0,S_1,S_2}= N^{(1)}_{S_0,S_1,S_2}$, we
can group together in Eq.~\eqref{EqOrientations} the terms corresponding
to the $2^{\ell(\mu)}$ couples $(S_0,\orient)$ in the same orbit.
We obtain the following result.
\begin{theorem}\label{ThmOrientationsOrbits}
 Let $\mu$ be a partition of the integer $k$ and $(S_1, S_2)$ be a fixed
    couple of pair-partitions of\/ $[2k]$ of type $\mu$. Then,
\begin{equation}\label{EqOrientationsOrbits}
        \Sigma^{(2)}_\mu =
                \sum_{\Omega} (-1)^{\loops(S_0(\Omega),S_1)}\
                   N^{(1)}_{S_0(\Omega),S_1,S_2},
\end{equation}
where the sum runs over orbits $\Omega$ of $\pairings^o$ under the action of $G$
(for such an orbit, $S_0(\Omega)$ is the first element of an arbitrary couple
in $\Omega$).
\end{theorem}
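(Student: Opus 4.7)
The plan is to derive Theorem \ref{ThmOrientationsOrbits} directly from Eq.~\eqref{EqOrientations} by exploiting the faithful diagonal action of $G$ on $\pairings^o$ established in the preceding lemma. Concretely, I will start from
\[
\Sigma^{(2)}_\mu = \frac{1}{2^{\ell(\mu)}} \sum_{(S_0,\orient)\in\pairings^o} (-1)^{\loops(S_0,S_1)}\; N^{(1)}_{S_0,S_1,S_2},
\]
and show that the summand is constant on each $G$-orbit. Together with the fact that every orbit has exactly $|G|=2^{\ell(\mu)}$ elements, this will cancel the prefactor and leave one representative per orbit.

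First I would verify the $G$-invariance of the summand. Since every $g\in G$ satisfies $g\cdot S_1 = S_1$ and $g\cdot S_2 = S_2$ by construction, Lemma~\ref{LemInvN} gives
\[
N^{(1)}_{g\cdot S_0, S_1, S_2} = N^{(1)}_{g\cdot S_0,\, g\cdot S_1,\, g\cdot S_2} = N^{(1)}_{S_0,S_1,S_2}.
\]
The same reasoning applied to the graphs $\loops(S_0,S_1)$ (which are obviously invariant, as bipartite edge-labeled graphs, under relabeling the edges by a permutation fixing both $S_0$ and $S_1$ set-theoretically up to the action) shows that $(-1)^{\loops(g\cdot S_0,S_1)} = (-1)^{\loops(S_0,S_1)}$. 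Hence the product $(-1)^{\loops(S_0,S_1)}N^{(1)}_{S_0,S_1,S_2}$ depends only on the $G$-orbit of $(S_0,\orient)$, and one can safely evaluate it at any representative $S_0(\Omega)$.

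Next I would use the previous lemma, which asserts that the action of $G$ on $\pairings^o$ is faithful. By the orbit–stabilizer theorem, every orbit $\Omega$ has cardinality exactly $|G| = 2^{\ell(\mu)}$. Grouping the summation in \eqref{EqOrientations} by orbits therefore yields
\[
\Sigma^{(2)}_\mu = \frac{1}{2^{\ell(\mu)}} \sum_{\Omega} 2^{\ell(\mu)}\,(-1)^{\loops(S_0(\Omega),S_1)}\; N^{(1)}_{S_0(\Omega),S_1,S_2} = \sum_{\Omega} (-1)^{\loops(S_0(\Omega),S_1)}\; N^{(1)}_{S_0(\Omega),S_1,S_2},
\]
which is exactly \eqref{EqOrientationsOrbits}.

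There is essentially no obstacle here: once Eq.~\eqref{EqOrientations} (a restatement of Theorem~\ref{theo:ZonalFeraySniady-A1}) and the faithfulness of the $G$-action on $\pairings^o$ are in hand, the result follows from a routine orbit-counting argument. The only point requiring minor care is making sure that the sign and the $N$-function truly factor through the orbit, which is immediate from the fact that every $g\in G$ stabilizes $S_1$ and $S_2$ separately (not merely their pair). Thus the theorem is a clean repackaging of the identity \eqref{EqOrientations} that makes the combinatorial objects counted on the right-hand side (orbits rather than decorated pair-partitions) more economical.
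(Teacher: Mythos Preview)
Your approach is identical to the paper's: start from Eq.~\eqref{EqOrientations}, observe that the summand is $G$-invariant (since $g\cdot S_1=S_1$, $g\cdot S_2=S_2$), and collapse the sum over $\pairings^o$ to a sum over $G$-orbits. One caveat: you deduce from \emph{faithfulness} via orbit--stabilizer that every orbit has size $2^{\ell(\mu)}$, but faithfulness alone does not imply this; what you need (and what the preceding lemma's proof actually establishes, despite its wording) is that the action is \emph{free}, i.e.\ $g\cdot(S_0,\orient)=(S_0,\orient)$ forces $g=\mathrm{id}$ for each individual pair, so every point-stabilizer is trivial.
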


Using Lemma~\ref{lem:N1_pq}, this formula gives an alternative proof of
Proposition~\ref{PropStanleyLassalle2}.
From a combinatorial point of view, it is more satisfying than the one above
because we are unable to interpret the number 
$\frac{2^{|\loops(S_0(\Omega),S_1)|}}{2^{\ell(\mu)}} |\Omega|$ in 
Eq~\eqref{ZonalCharOrbits}.
More details are given in Section \ref{SubsecGluingsNonoriented}.

\begin{remark}\label{RemSchurOrient}
    Let us consider orientations $\orient$ compatible with $\loops(S_0,S_1)$
    and $\loops(S_0,S_2)$.
    Each such an orientation can be viewed as a partition of $[2k]$ into two sets of size $k$, such that
    each pair in $S_0$, $S_1$ or $S_2$ contains an element of each set.
    If such a partition is given, the pair-partitions $S_0$, $S_1$ and $S_2$
    can be interpreted as permutations and the Schur case can be formulated
    in these terms (see Remark~\ref{rem:analogue_schur}).
\end{remark}

\section{Kerov polynomials}\label{sec:kerov}

\subsection{Graph associated to a triplet of pair-partitions}
Let $(S_0,S_1,S_2)$ be a triplet of pair partitions of $[2k]$.
We define the bipartite graph $G(S_0,S_1,S_2)$ in the following way.
\begin{itemize}
\item Its set of black vertices is $\loops(S_0,S_1)$.
\item Its set of white vertices is $\loops(S_0,S_2)$.
\item There is an edge between a black vertex $\ell \in \loops(S_0,S_1)$
    and a white vertex $\ell' \in \loops(S_0,S_2)$
if (and only if) the corresponding subsets of $[2k]$ have a non-empty intersection.
\end{itemize}

Note that the connectivity of $G(S_0,S_1,S_2)$ corresponds exactly to condition
 \ref{enum:first-condition} of Theorem \ref{theo:kerov}.

This definition is relevant because 
the function $N^{(1)}_{S_0,S_1,S_2}$ depends only on the graph $G(S_0,S_1,S_2)$.
Indeed, let us define, for any bipartite graph $G$,
a function $N^{(1)}_G$ on Young diagram as follows:
\begin{definition}\label{DefNG}
Let $G$ be a bipartite graph and $\lambda$ a Young diagram.
We denote $N^{(1)}_G(\lambda)$ the number of functions $f$
\begin{itemize}
  \item sending black vertices of $G$ to the set of column indices of $\lambda$;
  \item sending white vertices of $G$ to the set of row indices of $\lambda$;
  \item such that, for each edge of $G$ between a black vertex $b$ and a white vertex $w$,
  the box $(f(w),f(b))$ belongs to the Young diagram $\lambda$
  ({\it i.e.} $1\leq f(b) \leq \lambda_{f(w)}$).

\end{itemize}
\end{definition}
Then, using the arguments of the proof of Lemma~\ref{lem:N1_pq}, one has:
\[ N^{(1)}_{S_0,S_1,S_2} = N^{(1)}_{G(S_0,S_1,S_2)}. \]
As characters and cumulants, $N^{(1)}_G$ can be defined on non-integer
stretching of Young diagrams using Lemma~\ref{lem:N1_pq}.

\subsection{General formula for Kerov polynomials}
Our analysis of zonal Kerov polynomials will be based on the following general
result.
\begin{lemma}
\label{lem:extract-kerov}
Let $\mathcal{G}$ be a finite collection of connected bipartite graphs and let
$\mathcal{G}\ni G \mapsto m_G$ be a scalar-valued function on it. We assume that
$$ F(\lambda) = \sum_{G\in\mathcal{G}} m_G N^{(1)}_G(\lambda) $$
is a polynomial function on the set of Young diagrams; in other words $F$ can
be expressed as a polynomial in free cumulants.

Let $s_2,s_3,\dots$ be a sequence of non-negative
integers with only finitely many non-zero elements; then
$$ \left[ R_2^{s_2} R_3^{s_3} \cdots\right] F =
(-1)^{s_2+2s_3+3s_4+\cdots+1} \sum_{G\in\mathcal{G}} \sum_{q}  m_G,$$
where the sums runs over $G\in\mathcal{G}$ and $q$ such that:
\begin{enumerate}[label=(\alph*)]
 \addtocounter{enumi}{1}
 \item
\label{enum:ilosc2_Graphs}
the number of the black vertices of $G$ is
equal to $s_2+s_3+\cdots$;
 \item
\label{enum:boys-and-girls_Graphs}
the total number of vertices of $G$ is equal to $2 s_2+3 s_3+4 s_4+\cdots$;
 \item
\label{enum:kolorowanie_Graphs}
$q$ is a function from the set of
the black vertices to the set $\{2,3,\dots\}$; we require that each number
$i\in\{2,3,\dots\}$ is used exactly $s_i$ times;
\item
\label{enum:marriage_Graphs}
for every subset $A\subset V_\circ(G)$ of black vertices of $G$
which is nontrivial (i.e., $A\neq\emptyset$ and $A\neq V_\circ(G)$) there are
more than $\sum_{v\in A} \big( q(v)-1 \big)$ white vertices which are
connected to at least one vertex from $A$.
\end{enumerate}
\end{lemma}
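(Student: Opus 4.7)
The plan is to extract coefficients via the dilation $\lambda\mapsto D_s\lambda$, exploiting the homogeneity of free cumulants. Since $R_k(D_s\lambda)=s^k R_k(\lambda)$, the monomial $R_2^{s_2} R_3^{s_3}\cdots$ is homogeneous of degree $d:=2s_2+3s_3+\cdots$ under dilation, whereas $N^{(1)}_G(D_s\lambda)$ is polynomial in $s$ of degree $|V(G)|$ (total number of vertices), as one sees directly from Definition~\ref{DefNG}. Therefore only graphs with $|V(G)|\le d$ can contribute to the coefficient $[R_2^{s_2}R_3^{s_3}\cdots]F$, and among those the ones with $|V(G)|=d$ contribute only through the leading coefficient (in $s$) of $N^{(1)}_G(D_s\lambda)$. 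This already accounts for condition \ref{enum:boys-and-girls_Graphs} of the lemma.

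Next I would compute the leading coefficient of $N^{(1)}_G(D_s\lambda)/s^{|V(G)|}$ as $s\to\infty$ by replacing the counts in Definition~\ref{DefNG} by volume integrals over a continuous Young diagram. Grouping white vertices according to the black vertex that determines their row index in the limit produces a decomposition indexed by functions $q$ from the black vertices to $\{2,3,\dots\}$, where $q(v)-1$ is the number of white vertices attached to $v$ in this grouping; the identity $\sum_v(q(v)-1)=$ (number of white vertices) then forces \ref{enum:ilosc2_Graphs} and \ref{enum:kolorowanie_Graphs} simultaneously. The Hall-type condition \ref{enum:marriage_Graphs} is the non-degeneracy requirement that the associated monomial is genuinely $R_2^{s_2}R_3^{s_3}\cdots$: if it fails for some subset $A$, the relevant asymptotic contribution collapses onto a strictly smaller monomial in a suitable partial order on multisets of free-cumulant indices, and a triangular inversion isolates the configurations satisfying \ref{enum:marriage_Graphs}.

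The sign $(-1)^{s_2+2s_3+\cdots+1}$ will arise from the moment-cumulant-type inversion relating $N^{(1)}_G$ to products of free cumulants; the exponent $s_2+2s_3+\cdots$, which equals the number of white vertices, reflects that each white vertex contributes a sign, with an additional global minus coming from the standard cumulant-moment inversion. The main obstacle is making rigorous the triangular inversion between the $N^{(1)}_G$'s and free-cumulant monomials: one must exhibit a partial order for which the leading ``diagonal'' term of each $N^{(1)}_G$ is determined exactly by pairs $(G,q)$ satisfying \ref{enum:marriage_Graphs}, and verify that lower-order corrections cancel coherently across the sum $\sum_G m_G N^{(1)}_G$ when $F$ is known to be polynomial in free cumulants. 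This step closely parallels the technical core of \cite{DolegaF'eray'Sniady2008} for the $\alpha=1$ case and should transpose to the present abstract setting essentially verbatim.
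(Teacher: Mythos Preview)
Your proposal is correct in spirit and takes essentially the same approach as the paper: the paper itself provides no proof of this lemma, merely noting that the argument of \cite{DolegaF'eray'Sniady2008} (given there for $F=\Sigma^{(1)}_n$) carries over verbatim to the present abstract setting, which is exactly where your sketch ends up. Your outline of the dilation/homogeneity argument, the asymptotic volume interpretation of $N^{(1)}_G$, and the triangular inversion singling out the Hall-type condition \ref{enum:marriage_Graphs} is an accurate summary of that reference's strategy.
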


This result was proved in our previous paper with
Dołęga \cite{DolegaF'eray'Sniady2008}
in the special case when
$F=\Sigma^{(1)}_n$ and $\mathcal{G}$ is the (signed) collection of bipartite maps
corresponding to all factorizations of a cycle, however it is not difficult to
verify that the proof presented there works without any modifications also in
this more general setup.


\subsection{Proof of Theorem \ref{theo:kerov}}
\begin{proof}[Proof of Theorem \ref{theo:kerov}]
We consider for simplicity the case when $\mu=(k)$ has only one part.
By definition, it is obvious that, for any $G$ and $\lambda$,
\[N^{(1)}_G(\alpha \lambda) = \alpha^{|V_\bullet(G)|} N^{(1)}_G(\lambda),\]
where $|V_\bullet(G)|$ is the number of black vertices of $G$. Hence,
Theorem \ref{theo:ZonalFeraySniady-A1} can be rewritten in the form
$$  F(\lambda):=  \Sigma^{(2)}_k \left(\frac{1}{2}\lambda\right)=
\frac{1}{2}
                \sum_{S_0} (-1)^{k+|\loops(S_0,S_1)|}\
                   N^{(1)}_{S_0,S_1,S_2} (\lambda).
$$
Function $F$ is a polynomial function on the set of Young diagrams
\cite[Proposition 2]{Lassalle2008a}.
As the involutions corresponding to $S_1$ and $S_2$ span a transitive
subgroup of $\Sym{2k}$ (because the couple $(S_1,S_2)$ has type $(k)$),
the graph corresponding to $S_0,S_1,S_2$ is connected and Lemma
\ref{lem:extract-kerov} can be applied. 
$$ \left[ R_2^{s_2} R_3^{s_3} \cdots\right] F =\frac{1}{2} (-1)^{1+k+
 |\loops(S_0,S_1)|+s_2+2 s_3+3 s_4+\cdots}  \sum_{S_0} \sum_{q}  1,$$
where the sum runs over $S_0$ and $q$ such that the graph
$G(S_0,S_1,S_2)$ and $q$ fulfill the assumptions of Lemma
\ref{lem:extract-kerov}.
Notice that, for such a $S_0$, the number $|\loops(S_0,S_1)|$
of black vertices of $G(S_0,S_1,S_2)$ is $s_2+ s_3+ s_4$.
Under a change of variables $\widetilde{\lambda}=\frac{1}{2}\lambda$ we have
$ \Sigma^{(2)}_{k}(\widetilde{\lambda}) = F(\lambda) $
and $R_i=R_i(\lambda)=2^i R_i^{(2)}(\widetilde{\lambda})$ and thus
\begin{align*}
    \left[ \big(R^{(2)}_2\big)^{s_2} \big(R^{(2)}_3\big)^{s_3} \cdots\right] \Sigma^{(2)}_{k}
&= 2^{2 s_2 + 3 s_3 + \ldots} \left[ R_2^{s_2} R_3^{s_3} \cdots\right] F \\
&=(-1)^{1+k+2s_2+3s_3+\cdots} 2^{-1+2 s_2 + 3 s_3 + \ldots} \mathcal{N},
\end{align*}
where $\mathcal{N}$ is the number of couples $(S_0,q)$ as above.
This ends the proof in the case $\mu=(k)$.

Consider now the general case $\mu=(k_1,\dots,k_\ell)$. In an analogous way as
in \cite[Theorem 4.7]{DolegaF'eray'Sniady2008} one can show that $\kappa^{\text{id}}(
\Sigma^{(\alpha)}_{k_1}, \dots, \Sigma^{(\alpha)}_{k_\ell} )$ is equal to the
right-hand side of \eqref{eq:ZonalFeraySniady-A1}, where $S_1,S_2$ are chosen so
that $\type(S_1,S_2)=\mu$ and the summation runs over $S_0$ with the property
that the corresponding graph $G(S_0,S_1,S_2)$ is connected.
Therefore
\begin{multline*} F(\lambda):= (-1)^{\ell-1} \kappa^{\text{id}}(
\Sigma^{(\alpha)}_{k_1}, \dots, \Sigma^{(\alpha)}_{k_\ell} )
\left(\frac{1}{2}\lambda\right)= \\
\frac{1}{2^{\ell(\mu)}} (-1)^{\ell-1}
                \sum_{S_0} (-1)^{|\mu|+|\loops(S_0,S_1)|}\
                   N^{(1)}_{S_0,S_1,S_2} (\lambda).
\end{multline*}
The remaining part of the proof follows in an analogous way.
\end{proof}

\subsection{Particular case of Lassalle conjecture for Kerov polynomials}
\label{SubsectKerovLassalle}

The purpose of this paragraph is to prove Proposition~\ref{PropKerovLassalle2},
which states that the coefficients 
\[ \left[ \big(R^{(2)}_2\big)^{s_2} \big(R^{(2)}_3\big)^{s_3} \cdots\right] K^{(2)}_{\mu} \]
are integers.

This does not follow directly from Theorem~\ref{theo:kerov} because of the factor
$2^{\ell(\mu)}$. As in Section \ref{subsec:integers_in_Lassalle} we will use
Theorem~\ref{ThmOrientationsOrbits}.
With the same argument as in the previous paragraph, one obtains the following
result:
\begin{theorem}\label{LemKerovOrbits}
 Let $\mu$ be a partition of the integer $k$ and $(S_1, S_2)$ be a fixed
    couple of pair-partitions of\/ $[2k]$ of type $\mu$.
Let $s_2,s_3,\dots$ be a sequence of non-negative
integers with only finitely many non-zero elements.

Then the rescaled coefficient
\[ (-1)^{|\mu|+\ell(\mu)+2s_2+3s_3+\cdots}\                                                                                  
2^{- (s_2+2s_3+3s_4+\cdots)}             
\left[ \big(R^{(2)}_2\big)^{s_2} \big(R^{(2)}_3\big)^{s_3} \cdots\right] K^{(2)}_{\mu} \]
 is equal to the number of orbits $\Omega$ of couples $(S_0,\phi)$
 in $\pairings^o$ under the action of $G$,
 such that any element $S_0(\Omega)$ of this orbit fulfills conditions
 \ref{enum:first-condition}, \ref{enum:ilosc2}, \ref{enum:boys-and-girls},
\ref{enum:kolorowanie} and \ref{enum:marriage} of Theorem \ref{theo:kerov}.
\end{theorem}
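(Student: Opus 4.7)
I would follow the proof of Theorem~\ref{theo:kerov} verbatim, but replace Theorem~\ref{theo:ZonalFeraySniady-A1} by the orbit version Theorem~\ref{ThmOrientationsOrbits} as the starting formula. The gain is that the orbit expression carries no $1/2^{\ell(\mu)}$ prefactor, so the remaining powers of $2$ will combine cleanly into the normalization $2^{-(s_2+2s_3+3s_4+\cdots)}$ claimed in the statement, rather than the $2^{\ell(\mu)-(2s_2+3s_3+\cdots)}$ normalization of Theorem~\ref{theo:kerov}.

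Concretely, I would first set
\[
F(\lambda) := (-1)^{\ell(\mu)-1}\,\kappa^{\text{id}}(\Sigma^{(2)}_{k_1},\dots,\Sigma^{(2)}_{k_{\ell(\mu)}})(\lambda/2),
\]
which for $\mu=(k)$ reduces to $\Sigma^{(2)}_k(\lambda/2)$. Arguing as in \cite[Theorem 4.7]{DolegaF'eray'Sniady2008} (used at the end of the proof of Theorem~\ref{theo:kerov}), the cumulant can be expanded via Theorem~\ref{ThmOrientationsOrbits} restricted to orbits $\Omega\subset\pairings^o$ whose graph $G(S_0(\Omega),S_1,S_2)$ is connected. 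Combined with the scaling identity $N^{(1)}_{S_0,S_1,S_2}(\lambda/2)=2^{-|\loops(S_0,S_1)|}N^{(1)}_{S_0,S_1,S_2}(\lambda)$ — immediate from Lemma~\ref{lem:N1_pq}, since only the $q$-variables get rescaled — this yields
\[
F(\lambda) = (-1)^{\ell(\mu)-1}\sum_{\Omega}(-1)^{|\mu|+|\loops(S_0(\Omega),S_1)|}\,2^{-|\loops(S_0(\Omega),S_1)|}\,N^{(1)}_{G(S_0(\Omega),S_1,S_2)}(\lambda).
\]
Invariance of the connectedness condition under $G$ is clear, since $G$ stabilizes $S_1$ and $S_2$, so replacing $S_0$ by $g\cdot S_0$ only conjugates $\langle S_0,S_1,S_2\rangle$ by $g$ and preserves transitivity.

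Next I would apply Lemma~\ref{lem:extract-kerov} with the weight $m_\Omega = (-1)^{\ell(\mu)-1+|\mu|+|\loops(S_0(\Omega),S_1)|}\,2^{-|\loops(S_0(\Omega),S_1)|}$. Since condition \ref{enum:ilosc2_Graphs} forces $|\loops(S_0(\Omega),S_1)|=s_2+s_3+\cdots$, this weight is constant across contributing orbits, and combined with the factor $(-1)^{s_2+2s_3+\cdots+1}$ from Lemma~\ref{lem:extract-kerov} the total sign is exactly $(-1)^{|\mu|+\ell(\mu)+2s_2+3s_3+\cdots}$. Finally, converting $[R_2^{s_2}R_3^{s_3}\cdots]F$ into $[(R^{(2)}_2)^{s_2}(R^{(2)}_3)^{s_3}\cdots]K^{(2)}_\mu$ via the relation $R_i(\lambda)=2^i R^{(2)}_i(\lambda/2)$ introduces a factor $2^{2s_2+3s_3+\cdots}$; together with the $2^{-(s_2+s_3+\cdots)}$ in the weight this gives the net power $2^{s_2+2s_3+3s_4+\cdots}$, which cancels the normalization $2^{-(s_2+2s_3+3s_4+\cdots)}$ appearing on the left-hand side of the claim and leaves exactly $\mathcal{N}$, the number of orbits with the required properties. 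The only real obstacle is the careful bookkeeping of signs and powers of $2$; no new combinatorial content beyond Theorem~\ref{ThmOrientationsOrbits} and the mechanism of Theorem~\ref{theo:kerov} is needed.
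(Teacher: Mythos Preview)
Your proposal is correct and is exactly the argument the paper has in mind: the paper simply says ``With the same argument as in the previous paragraph,'' meaning one repeats the proof of Theorem~\ref{theo:kerov} with Theorem~\ref{ThmOrientationsOrbits} in place of Theorem~\ref{theo:ZonalFeraySniady-A1}. You have spelled out the sign and power-of-$2$ bookkeeping more carefully than the paper does, and it all checks out.
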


This implies immediately Proposition~\ref{PropKerovLassalle2}.
In fact, one shows a stronger result, which fits with Lassalle's data:
the coefficient of $\big(R^{(2)}_2\big)^{s_2} \big(R^{(2)}_3\big)^{s_3} \cdots$
in $K^{(2)}_{\mu}$ is a multiple of $2^{s_2+2s_3+3s_4+\cdots}$.

\section{Maps on possibly non orientable surfaces}\label{SectCombinatorics}

The purpose of this section is to emphasize the fact that triplets of
pair-partitions are in fact a much more natural combinatorial object than it may
seem at the first glance: each such a triple can be seen as a graph drawn on a
(non-oriented) surface.

\subsection{Gluings of bipartite polygons}\label{SubsectGluings}

It has been explained in Section \ref{SubsubsectCouplePP} how a couple of
pair-partitions $(S_1,S_2)$ of the same set $[2k]$ can be represented by the
collection $\loops(S_1,S_2)$ of edge-labeled polygons: the white (resp.~black)
vertices correspond to the pairs of $S_1$ (resp.~$S_2$). For instance, let us
consider the couple
 \begin{eqnarray*}
        S_1 &= \big\{
\{1,15\},\{2,3\},\{4,14\},\{13,16\},\{5,7\},\{6,10\},\{8,11\},\{9,12\}\big\};\\
        S_2 &= \big\{
\{1,10\},\{2,7\},\{8,13\},\{9,14\},\{3,5\},\{4,12\},\{6,15\},\{11,16\}\big\}.\\
    \end{eqnarray*}
The corresponding polygons are drawn on Figure \ref{ExPolygons2}.

    \begin{figure}[tbp]
        \[\begin{array}{c} \includegraphics[height=1.8cm]{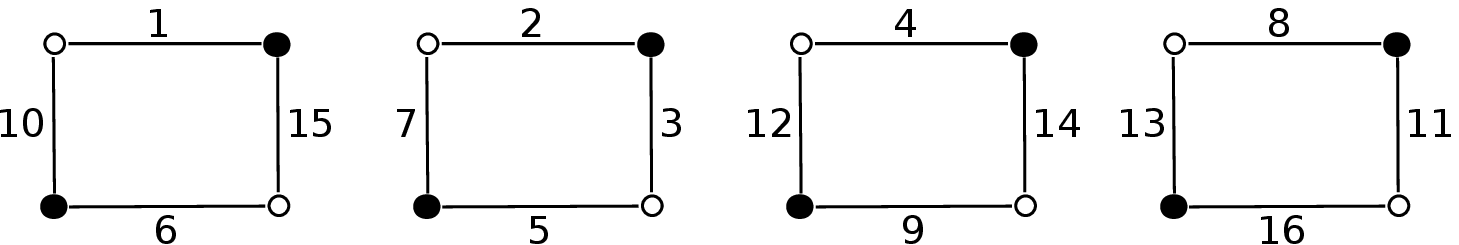} \end{array}\]
        \caption{Polygons associated to the couple $(S_1,S_2)$.}
        \label{ExPolygons2}
    \end{figure}

With this in mind, one can see the third pair-partition $S_0$ as a set of
instructions to glue the edges of our collection of polygons. If $i$ and $j$ are
partners in $S_0$, we glue the edges labeled $i$ and $j$ together in such a way
that their black (respectively, white) extremities are glued together. When
doing this, the union the polygons becomes a (non-oriented, possibly
non-connected) surface, which is well-defined up to continuous deformation of
the surface. The border of the polygons becomes a bipartite graph drawn on
this surface (when it is connected, this object is usually called \emph{map}).
We denote $M(S_0,S_1,S_2)$ the union of maps obtained in this way.
An edge of $M(S_0,S_1,S_2)$ is formed by two edge-sides, each one of them
corresponding to an edge of a polygon.

For instance, we continue the previous example by choosing
\[        S_0 = \big\{
\{1,2\},\{3,4\},\{5,6\},\{7,8\},\{9,10\},\{11,12\},\{13,14\},\{15,16\}\big\}.\]
We obtain a graph drawn on a Klein bottle, represented on the left-hand side of
Figure \ref{FigMap} (the Klein bottle can be viewed as the square with some
identification of its edges).
A planar representation of this map, involving artificial crossings and twists
of edges,
is given on the right-hand side of the same figure.
    \begin{figure}[tbp]
        \[\begin{array}{c} \includegraphics[width=5.5cm]{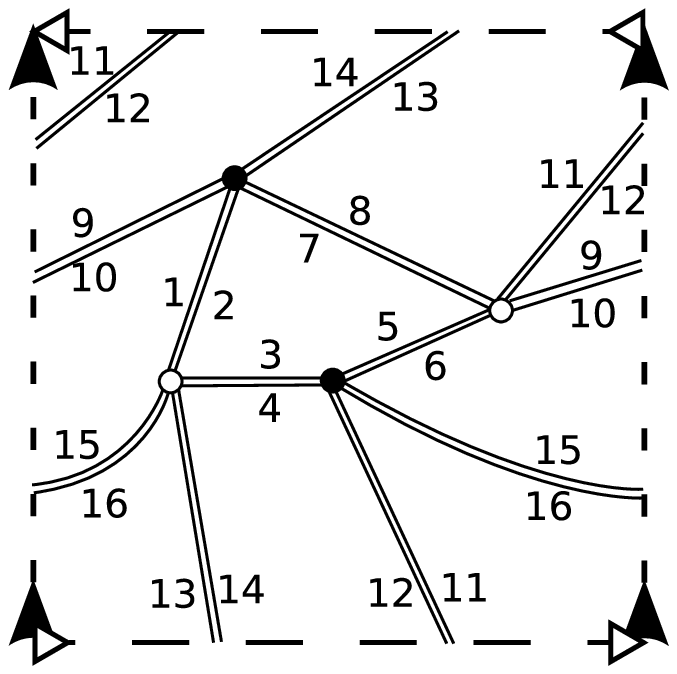} \end{array}
            \quad
        \begin{array}{c} \includegraphics[width=5.5cm]{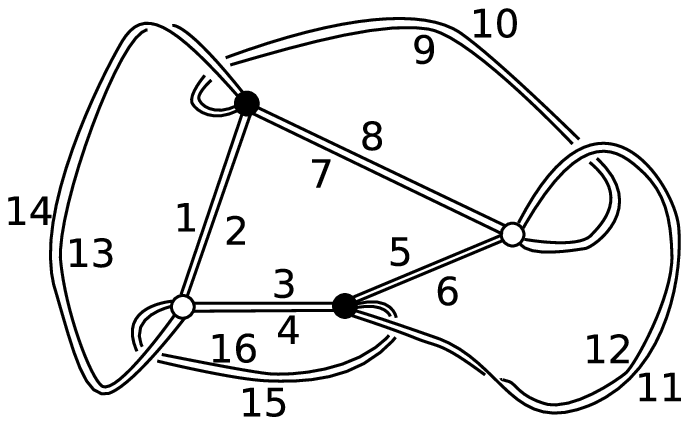}\end{array}\]
        \caption{Example of a labeled map on Klein bottle.}
        \label{FigMap}
    \end{figure}

    \subsection{The underlying graph of a gluing of polygons}\label{SubsectUnderlyingGraph}
By definition, the black vertices of $\loops(S_1,S_2)$ correspond to the pairs in $S_1$.
If $\{i,j\}$ is a pair in $S_0$, when we glue the edges $i$ and $j$ together,
we also glue the black vertex containing $i$ with the black vertex containing $j$.
Hence, when all pairs of edges have been glued, we have one black vertex
per loop in $\loops(S_0,S_1)$.

In the same way, the white vertices of the union of maps $M(S_0,S_1,S_2)$ correspond
to the loops in $\loops(S_0,S_2)$.

The edges of the union of maps correspond to pairs in $S_0$, therefore a black vertex 
$\ell \in \loops(S_0,S_1)$ is linked to a white vertex $\ell' \in \loops(S_0,S_2)$
if there is a pair of $S_0$ which is included in both $\ell$ and $\ell'$.
As $\ell$ and $\ell'$ are unions of pairs of $S_0$, this is equivalent to the 
fact that they have a non-empty intersection.

Hence the underlying graph of $M(S_0,S_1,S_2)$ ({\it i.e.} the graph obtained
by forgetting the surface, the edge labels and the multiple edges) is exactly
the graph $G(S_0,S_1,S_2)$ defined in Section \ref{DefNG}.

It is also interesting to notice (even if it will not be useful in this paper)
that the faces of
the union of maps $M(S_0,S_1,S_2)$ (which are, by definition, the connected components of 
the surface after removing the graph) correspond by construction to the loops in
$\loops(S_1,S_2)$.

\begin{remark}
The related combinatorics of maps which are not bipartite has been studied by
Goulden and Jackson \cite{Goulden1996a}.
\end{remark}

\subsection{Reformulation of Theorems \ref{theo:ZonalFeraySniady-A1} and \ref{theo:kerov}}

In some of our theorems, we fix a partition $\mu \vdash k$
and a couple of pair-partitions $(S_1,S_2)$ of type $\mu$.
Using the graphical representation of Section \ref{SubsubsectCouplePP},
it is the same as fixing $\mu$ and a collection of edge-labelled polygons
of lengths $2\mu_1, 2\mu_2, \dots$.

In this context, the set of pair-partitions is the set of maps
obtained by gluing by pair the edges of these polygons (see
Section \ref{SubsectGluings}). 

Then the different quantities involved in our theorems have a
combinatorial translation: $G(S_0,S_1,S_2)$ is the underlying graph
of the map (Section~\ref{SubsectUnderlyingGraph}),
$\loops(S_0,S_1)$ the set of its black vertices and
$\loops(S_0,S_2)$ the set of its white vertices.

One can now give combinatorial formulations for two of our theorems.

\begin{theorem}\label{TheoStanleyCombi}
    Let $\mu$ be a partition of the integer $k$.
    Consider a collection of edge-labelled polygons of lengths
    $2\mu_1, 2\mu_2, \dots$.
    Then one has the following equality between functions on the set of Young
    diagrams:
    \begin{equation}
        \Sigma^{(2)}_\mu =\frac{(-1)^k}{2^{\ell(\mu)}}
                \sum_{M} (-2)^{|V_\bullet(M)|}\ N^{(1)}_{G(M)},
           \end{equation}
    where the sum runs over unions of maps obtained by gluing by pair the 
    edges of our collection of polygons in all possible ways;
    $|V_\bullet(M)|$ is the number of black vertices of $M$
    and $G(M)$ the underlying graph.
\end{theorem}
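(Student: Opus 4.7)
The plan is to prove Theorem \ref{TheoStanleyCombi} simply as a translation of Theorem \ref{theo:ZonalFeraySniady-A1} into the language of maps developed in Section \ref{SectCombinatorics}. Once the collection of labelled polygons representing $(S_1,S_2)$ is fixed, the content of Section \ref{SubsectGluings} is that there is a canonical bijection between pair-partitions $S_0$ of $[2k]$ and unions of maps $M=M(S_0,S_1,S_2)$ obtained by gluing the edges of the polygons by pairs. So summing over $S_0$ is exactly the same as summing over such gluings~$M$.

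Next I would match the three ingredients in the summands. By Section \ref{SubsectUnderlyingGraph}, the black vertices of $M$ are in natural bijection with the loops of $\loops(S_0,S_1)$ and the underlying graph of $M$ is the graph $G(S_0,S_1,S_2)$, so
\[
|V_\bullet(M)| = |\loops(S_0,S_1)|, \qquad N^{(1)}_{G(M)} = N^{(1)}_{G(S_0,S_1,S_2)} = N^{(1)}_{S_0,S_1,S_2},
\]
the last equality being the observation recalled just after Definition \ref{DefNG}. Only the sign requires any computation: by the very definition of $(-1)^{\loops(S_0,S_1)}$ given in Section \ref{SubsubsectCouplePP},
\[
(-1)^{\loops(S_0,S_1)}\, 2^{|\loops(S_0,S_1)|}
= (-1)^{k-|\loops(S_0,S_1)|}\, 2^{|\loops(S_0,S_1)|}
= (-1)^k\,(-2)^{|V_\bullet(M)|}.
\]

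Substituting these three identities into the right-hand side of Theorem \ref{theo:ZonalFeraySniady-A1} and factoring the constant $(-1)^k$ out of the sum yields exactly the formula of Theorem \ref{TheoStanleyCombi}. Since everything here is a direct dictionary lookup, there is no genuine obstacle; the only point that deserves care is the sign rewrite, which relies on remembering that the exponent notation $(-1)^{\loops(S_0,S_1)}$ encodes $k$ minus the number of components and not the number of components itself.
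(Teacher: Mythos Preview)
Your argument is correct and is precisely the approach the paper takes: its proof of Theorem~\ref{TheoStanleyCombi} is the single sentence ``Reformulation of Theorem~\ref{theo:ZonalFeraySniady-A1},'' and you have simply spelled out that dictionary explicitly. The sign computation is fine, and the identifications of $|V_\bullet(M)|$ with $|\loops(S_0,S_1)|$ and of $N^{(1)}_{G(M)}$ with $N^{(1)}_{S_0,S_1,S_2}$ are exactly the ones established in Section~\ref{SubsectUnderlyingGraph} and after Definition~\ref{DefNG}.
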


\begin{proof}Reformulation of Theorem~\ref{theo:ZonalFeraySniady-A1}.
\end{proof}

\begin{theorem}\label{TheoKerovCombiB}
    Let $\mu$ be a partition of the integer $k$.
Consider a collection of edge-labelled polygons of lengths $2\mu_1, 2\mu_2, \dots$. 

Let $s_2,s_3,\dots$ be a sequence of non-negative
integers with only finitely many non-zero elements.

The rescaled coefficient
$$  (-1)^{|\mu|+\ell(\mu)+2s_2+3s_3+\cdots} (2)^{\ell(\mu) - (2s_2+3s_3+\cdots)}
\left[ \left(R_2^{(2)}\right)^{s_2}
\left(R_3^{(2)}\right)^{s_3} \cdots\right]
K^{(2)}_\mu $$ of the (generalized)
zonal Kerov polynomial is equal to the number of pairs $(M,q)$
such that 
\begin{itemize}
  \item $M$ is a connected map obtained by gluing edges of
our polygons by pair;
   \item the pair $(G(M),q)$, where $G(M)$ is the underlying graph of $M$,
   fulfill conditions \ref{enum:ilosc2_Graphs}, \ref{enum:boys-and-girls_Graphs},
   \ref{enum:kolorowanie_Graphs} and \ref{enum:marriage_Graphs} of 
   Lemma \ref{lem:extract-kerov}.
\end{itemize}
\end{theorem}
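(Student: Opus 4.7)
The plan is to deduce Theorem \ref{TheoKerovCombiB} as a direct reformulation of Theorem \ref{theo:kerov} via the combinatorial dictionary established in Sections \ref{SubsectGluings} and \ref{SubsectUnderlyingGraph}. First, I would fix a couple $(S_1,S_2)$ of pair-partitions of $[2k]$ of type $\mu$; graphically this is precisely the data of a collection of edge-labelled polygons of lengths $2\mu_1,2\mu_2,\dots$, with black vertices indexed by the pairs of $S_1$ and white vertices indexed by the pairs of $S_2$. As explained in Section \ref{SubsectGluings}, specifying a third pair-partition $S_0$ is equivalent to specifying a gluing rule that identifies the edges of the polygons in pairs, producing the (possibly disconnected) union of maps $M=M(S_0,S_1,S_2)$. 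This gives a bijection between pair-partitions $S_0$ of $[2k]$ and unions of maps $M$ obtained from our polygons.

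Next, I would translate, one by one, the five conditions appearing in Theorem \ref{theo:kerov} into conditions on $M$. Condition \ref{enum:first-condition} — that the three involutions associated with $S_0,S_1,S_2$ generate a transitive subgroup of $\Sym{2k}$ — is equivalent to connectedness of the underlying graph $G(S_0,S_1,S_2)=G(M)$, hence to connectedness of the map $M$. By the observations of Section \ref{SubsectUnderlyingGraph}, the elements of $\loops(S_0,S_1)$ and $\loops(S_0,S_2)$ correspond respectively to the black and white vertices of $G(M)$; consequently conditions \ref{enum:ilosc2} and \ref{enum:boys-and-girls} of Theorem \ref{theo:kerov} coincide with conditions \ref{enum:ilosc2_Graphs} and \ref{enum:boys-and-girls_Graphs} of Lemma \ref{lem:extract-kerov} applied to $G(M)$. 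Conditions \ref{enum:kolorowanie} and \ref{enum:marriage}, which concern a colouring $q$ of $\loops(S_0,S_1)$ and a ``marriage-type'' inequality relating subsets $A\subseteq\loops(S_0,S_1)$ to the loops of $\loops(S_0,S_2)$ meeting them, translate verbatim into conditions \ref{enum:kolorowanie_Graphs} and \ref{enum:marriage_Graphs} about subsets of black vertices of $G(M)$ and the white vertices adjacent to them.

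Finally, I would observe that the rescaling prefactor
\[
(-1)^{|\mu|+\ell(\mu)+2s_2+3s_3+\cdots}\,2^{\ell(\mu)-(2s_2+3s_3+\cdots)}
\]
of Theorem \ref{TheoKerovCombiB} is literally identical to the one in Theorem \ref{theo:kerov}, so the two counting statements coincide term by term under the bijection $S_0\leftrightarrow M$. There is no substantive technical obstacle: the whole content of the theorem is that this dictionary is faithful. The one point that merits a careful check is that ``connected map'' in the sense of Section \ref{SectCombinatorics} is really the same notion as connectedness of the underlying bipartite graph $G(M)$ — this is because the surface is constructed by gluing the polygons along their edges, so two polygons lie in the same connected component of $M$ iff their corresponding vertices in $G(M)$ are joined by a path of edges, which is exactly the content of Section \ref{SubsectUnderlyingGraph}. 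With this observation in hand the proof reduces to a single sentence: \emph{Theorem \ref{TheoKerovCombiB} is Theorem \ref{theo:kerov} written in the language of maps on (possibly non-orientable) surfaces.}
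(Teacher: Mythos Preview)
Your proposal is correct and is exactly the paper's approach: the paper's own proof of Theorem~\ref{TheoKerovCombiB} consists of the single line ``Reformulation of Theorem~\ref{theo:kerov}.'' You have simply spelled out the dictionary (pair-partitions $S_0 \leftrightarrow$ gluings $M$, loops of $\loops(S_0,S_i) \leftrightarrow$ vertices of $G(M)$, transitivity of $\langle S_0,S_1,S_2\rangle \leftrightarrow$ connectedness) that the paper leaves implicit after having set it up in Sections~\ref{SubsectGluings}--\ref{SubsectUnderlyingGraph}.
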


\begin{proof}Reformulation of Theorem~\ref{theo:kerov}.
\end{proof}

\begin{remark}
As $G(M)$ is an unlabelled graph, the edge-labelling of the polygons is not important.
But we still have to consider a family of polygons without automorphism.
So, instead of edge-labelled polygons, we could consider a family of distinguishable
edge-rooted polygons (which means that each polygon has a marked edge and
that we can distinguish the polygons, even the ones with the same size).
\end{remark}

\begin{remark}
These results are analogues to results for characters of the symmetric groups.
The latter are the same (up to normalizing factors),
except that one has to consider a family of oriented polygons
and consider only gluings which respect this orientation
(hence the resulting surface has also a natural orientation).
These results can be found in papers \cite{F'eray'Sniady-preprint2007} and
\cite{DolegaF'eray'Sniady2008}, but, unfortunately, not under this formulation.
\end{remark}

\subsection{Orientations around black vertices}\label{SubsecGluingsNonoriented}

The purpose of this section is to give a combinatorial interpretation
of Theorem~\ref{ThmOrientationsOrbits} and Theorem~\ref{LemKerovOrbits}.

As before, $S_0$ is interpreted as a map obtained by gluing by pair
the edges of a collection of distinguishable edge-rooted polygons.

An orientation $\phi$ consists in orienting each edge of this
collection of polygons ({\it i.e.} each edge-side of the map).
It is compatible with $\loops(S_0,S_1)$ if, around each black vertex,
outgoing and incoming edge-sides alternate (see Figure \ref{FigBlackCompatible}).

    \begin{figure}[tbp]
        \includegraphics[height=2cm]{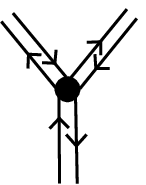} 
        \caption{A black vertex after a black-compatible orientation and gluing.}
        \label{FigBlackCompatible}
    \end{figure}

To make short, we will say in this case, that the orientation and
the gluing are black-compatible.
So $\pairings^o$ is the set of black-compatible orientations and gluings
of our family of polygons.

In our formulas we consider orbits of $\pairings^o$ under the action of $G$.
Recall that $G$ is the group generated by the $r_L$, for $L \in \loops(S_1,S_2)$
where $r_L$ is an axial symmetry of the loop $L$
(and its axis of symmetry goes through a black vertex).

Notice that, in general, combinatorial objects with unlabeled components are,
strictly speaking, equivalence classes of the combinatorial objects of the same
type with labeled components; the equivalence classes are the orbits of the
action of some group which describes the symmetry of the unlabeled version. 

In our case, a (bipartite) polygon with a marked edge has no symmetry.
But, if we consider a polygon with a marked black vertex, its automorphism
group is exactly the two-element group generated by the axial symmetry
going though this vertex.

Therefore, the orbits of $\pairings^o$ under the action of $G$ can be
interpreted as the black-compatible orientations and gluing of a collection
of distinguishable vertex-rooted polygons.

We can now reformulate Theorems~\ref{ThmOrientationsOrbits} and \ref{LemKerovOrbits}.

\begin{theorem}\label{TheoStanleyCombi2}
    Let $\mu$ be a partition of the integer $k$.
    Consider a collection of unlabeled polygons of lengths
    $2\mu_1, 2\mu_2, \dots$ with one marked black vertex per polygon.
    Then one has the following equality between functions on the set of Young
    diagrams:
    \[ 
        \Sigma^{(2)}_\mu
          =(-1)^k
          \sum_{\vec{M}} (-1)^{|V_\bullet(M)|}\ N^{(1)}_{G(M)},   
    \]
    where the sum runs over all unions of maps with oriented edge-sides
    obtained by a black-compatible orientation and gluing of the 
    edges of our collection of polygons;
    $M$ is the map obtained by forgetting the orientations of the edge-sides,
    $|V_\bullet(M)|$ is the number of black vertices of $M$
    and $G(M)$ the underlying bipartite graph.
\end{theorem}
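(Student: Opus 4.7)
My plan is to derive Theorem~\ref{TheoStanleyCombi2} as a purely geometric reformulation of Theorem~\ref{ThmOrientationsOrbits}, using the dictionary between triplets of pair-partitions and bipartite maps on surfaces set up in Sections~\ref{SubsectGluings}--\ref{SubsecGluingsNonoriented}. First I would fix a couple $(S_1,S_2)$ of pair-partitions of type $\mu$; as in Section~\ref{SubsectGluings}, this amounts to fixing a family of edge-labelled bipartite polygons of lengths $2\mu_1,2\mu_2,\dots$. A pair-partition $S_0$ is then the data of a gluing of these polygons, producing a union of maps $M(S_0,S_1,S_2)$ whose underlying graph $G(M)$ coincides with $G(S_0,S_1,S_2)$. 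In particular, by the identity recorded just before Theorem~\ref{TheoStanleyCombi}, $N^{(1)}_{S_0,S_1,S_2}=N^{(1)}_{G(M)}$. Similarly an orientation $\phi$ of $[2k]$ is exactly an orientation of each edge-side of the polygons, and compatibility of $\phi$ with $\loops(S_0,S_1)$ translates into the black-compatibility condition illustrated in Figure~\ref{FigBlackCompatible}. Thus $\pairings^o$ corresponds bijectively to the set of black-compatible oriented gluings of the edge-labelled polygons.

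The second step is to pass from edge-labelled polygons to unlabelled polygons each marked at one black vertex, by quotienting by the group $G$. The key geometric observation is that each generator $r_L$ of $G$ is precisely the unique nontrivial automorphism of the polygon $L\in\loops(S_1,S_2)$ once one marks its distinguished black vertex but forgets the edge-labelling. Edge-labelled polygons have trivial automorphism group, whereas a polygon equipped only with a marked black vertex has automorphism group of order $2$ generated by the axial symmetry through that vertex; hence quotienting $\pairings^o$ by $G$ yields exactly the set $\vec{M}$ of Theorem~\ref{TheoStanleyCombi2}, namely black-compatible oriented gluings of an unordered family of polygons, each carrying a single marked black vertex.

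Finally I would transfer the sign and conclude. By the definition $(-1)^{\loops(S_0,S_1)}=(-1)^{k-|\loops(S_0,S_1)|}$ given in Section~\ref{SubsubsectCouplePP}, and the identification $|\loops(S_0,S_1)|=|V_\bullet(M)|$ coming from Section~\ref{SubsectUnderlyingGraph}, the sign appearing in Theorem~\ref{ThmOrientationsOrbits} equals $(-1)^k(-1)^{|V_\bullet(M)|}$. Substituting this together with the bijection of the previous paragraph into Theorem~\ref{ThmOrientationsOrbits} gives exactly the claimed formula. The main subtlety is really concentrated in step~2: one must carefully check that the $G$-action on $\pairings^o$ both is free (this was essentially shown in the faithfulness lemma preceding Theorem~\ref{ThmOrientationsOrbits}) and matches the full automorphism group of vertex-rooted polygons. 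Once the right picture is drawn this is clean, and no new analytic input is needed beyond Theorem~\ref{ThmOrientationsOrbits}; everything else is a routine translation through the map/pair-partition dictionary.
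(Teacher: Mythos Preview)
Your proposal is correct and follows essentially the same route as the paper: the paper presents Theorem~\ref{TheoStanleyCombi2} explicitly as a reformulation of Theorem~\ref{ThmOrientationsOrbits} via the map/pair-partition dictionary, and the discussion in Section~\ref{SubsecGluingsNonoriented} supplies precisely the ingredients you list (identifying $\pairings^o$ with black-compatible oriented gluings, identifying $G$-orbits with gluings of vertex-rooted polygons via the axial-symmetry automorphism, and the freeness of the $G$-action). One small wording issue: the polygons remain \emph{distinguishable} after quotienting by $G$ (the group $G$ contains only the axial symmetries of individual polygons, not permutations among polygons of equal size), so ``unordered family'' is a slight overstatement, but this does not affect the argument.
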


\begin{theorem}\label{TheoKerovCombiA}
Let $\mu$ be a partition of the integer $k$.
Consider a collection of unlabeled polygons of lengths
$2\mu_1, 2\mu_2, \dots$ with one marked black vertex per polygon.
Let $s_2,s_3,\dots$ be a sequence of non-negative
integers with only finitely many non-zero elements.

Then the rescaled coefficient
$$  (-1)^{|\mu|+\ell(\mu)+2s_2+3s_3+\cdots}\ 
2^{- (s_2+2s_3+3s_4+\cdots)}
\left[ \left(R_2^{(2)}\right)^{s_2}
\left(R_3^{(2)}\right)^{s_3} \cdots\right]
K^{(2)}_\mu $$ of the (generalized)
zonal Kerov polynomial is equal to the number of pairs $(\vec{M},q)$
such that 
\begin{itemize}
    \item $\vec{M}$ is a connected map with oriented edge-sides
        obtained by a black-compatible orientation and gluing of the 
            edges of our collection of polygons;
            denote $M$ the map obtained by forgetting the orientations of the edge-sides.
   \item the pair $(G(M),q)$, where $G(M)$ is the underlying graph of $M$,
   fulfills conditions \ref{enum:ilosc2_Graphs},
    \ref{enum:boys-and-girls_Graphs},
   \ref{enum:kolorowanie_Graphs} and \ref{enum:marriage_Graphs} of 
   Lemma \ref{lem:extract-kerov}.
\end{itemize}
\end{theorem}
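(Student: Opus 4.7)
The plan is to derive Theorem~\ref{TheoKerovCombiA} as a direct combinatorial reformulation of Theorem~\ref{LemKerovOrbits}, using the dictionary between triples of pair-partitions and gluings of polygons set up in Sections~\ref{SubsectGluings}--\ref{SubsectUnderlyingGraph}. The rescaling factor and the polynomial extraction are already encoded in Theorem~\ref{LemKerovOrbits}, so the only work is to translate each ingredient (elements of $\pairings^o$, the action of $G$, the conditions \ref{enum:first-condition}--\ref{enum:marriage}) into the polygon language.

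First, I would fix a couple $(S_1,S_2)$ of pair-partitions of $[2k]$ of type $\mu$. By Section~\ref{SubsubsectCouplePP}, this datum is the same as a collection of edge-labelled bipartite polygons of lengths $2\mu_1,2\mu_2,\dots$, and by Section~\ref{SubsectGluings} a third pair-partition $S_0$ is equivalent to a way of gluing the edges of this collection in pairs, yielding the union of maps $M=M(S_0,S_1,S_2)$. Next, I would translate the orientation data: giving an orientation $\phi$ of $[2k]$ amounts to attaching a color $\{\text{red},\text{green}\}$ to each edge-side of a polygon, or equivalently to orienting each edge-side. The requirement that every pair of $S_0$ contains one red and one green element means that the two edge-sides identified by the gluing acquire opposite colors, and hence combine into a coherently oriented edge of $M$; the requirement that every pair of $S_1$ contains one red and one green element means that the two edge-sides incident to a black vertex of $M$ along a polygon boundary carry opposite orientations, i.e.\ that outgoing and incoming sides alternate around each black vertex (Figure~\ref{FigBlackCompatible}). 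Thus $\pairings^o$ is in natural bijection with black-compatible orientations-and-gluings of our collection of edge-labelled polygons.

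The last step is to take the quotient by $G$. Recall that $G$ is generated by the involutions $r_L$, $L\in\loops(S_1,S_2)$, and that each $r_L$ acts as the axial symmetry of the polygon corresponding to $L$ whose axis passes through a specific black vertex; equivalently, $r_L$ is the unique non-trivial automorphism of that polygon once one marks this black vertex. By the lemma proved in Section~\ref{subsec:integers_in_Lassalle} the action of $G$ on $\pairings^o$ is free, so orbits have size $2^{\ell(\mu)}$ and they are in bijection with black-compatible orientations-and-gluings of the collection of polygons considered with their edges unlabelled but with one marked black vertex per polygon. The main obstacle is precisely this identification of the quotient with unlabelled-but-vertex-rooted polygons: it rests on the freeness of the $G$-action and on the observation that the $r_L$ exhaust the automorphisms of a vertex-rooted polygon. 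Once this is established, conditions \ref{enum:first-condition}, \ref{enum:ilosc2} and \ref{enum:boys-and-girls} in Theorem~\ref{theo:kerov} translate via Section~\ref{SubsectUnderlyingGraph} into the connectedness of $M$ and the prescribed numbers of black and white vertices of the underlying graph $G(M)$, while conditions \ref{enum:kolorowanie} and \ref{enum:marriage} are intrinsic statements about $(G(M),q)$ that match exactly \ref{enum:kolorowanie_Graphs} and \ref{enum:marriage_Graphs} in Lemma~\ref{lem:extract-kerov}. Substituting this dictionary into Theorem~\ref{LemKerovOrbits} yields the formula stated in Theorem~\ref{TheoKerovCombiA}.
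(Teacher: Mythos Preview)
Your proposal is correct and follows essentially the same route as the paper: the paper presents Theorem~\ref{TheoKerovCombiA} as a direct reformulation of Theorem~\ref{LemKerovOrbits} via the dictionary of Section~\ref{SubsecGluingsNonoriented}, and you have accurately spelled out each step of that dictionary (pair-partitions $\leftrightarrow$ gluings, orientations $\leftrightarrow$ edge-side colorings, black-compatibility, and the identification of $G$-orbits with vertex-rooted unlabeled polygons via the freeness of the action).
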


\begin{remark}
    It is easy to see that
    a black- and white-compatible orientation and gluing of a collection
    of polygons leads to a map on a oriented surface.
    Therefore the analogue results in the Schur case can be interpreted
    in these terms.

    This remark is the combinatorial version of Remark \ref{RemSchurOrient}.
\end{remark}

\section*{Acknowledgments}

The authors benefited a lot from participation in \emph{Workshop on Free
Probability and Random Combinatorial Structures}, December 2009, funded
by Sonderforschungsbereich 701 \emph{Spectral Structures and Topological Methods
in Mathematics} at Universit\"at Bielefeld.

Research of PŚ was supported by the Polish Ministry of Higher Education research
grant N N201 364436 for the years 2009--2012.

PŚ thanks Professor Herbert Spohn and his collaborators for their wonderful
hospitality at Technische Universit\"at M\"unchen, where a large part of the
research was conducted. PŚ thanks also Max-Planck-Institut für
extraterrestrische Physik in Garching bei M\"unchen, where a large part of the
research was conducted.

\bibliographystyle{alpha}

\bibliography{biblio2009}

\end{document}